\documentclass[reqno]{amsart}

\usepackage[utf8]{inputenc}
\usepackage{graphicx}
\usepackage{mathtools}
\usepackage{xcolor}
\usepackage[parfill]{parskip}
\usepackage{tikz}
\usetikzlibrary{arrows, backgrounds, calc, chains, decorations, patterns, positioning, shapes, decorations.pathreplacing}

\usepackage{amsfonts}
\usepackage{amsmath}
\usepackage[alphabetic]{amsrefs}
\usepackage{amssymb}
\usepackage{amstext}
\usepackage{amsthm}
\usepackage{dsfont}

\usepackage{caption}
\usepackage{enumitem}
\usepackage[centering]{geometry}
\usepackage{hyperref}
\usepackage{cleveref}
\usepackage{subcaption}
\usepackage{faktor}

\DeclareMathOperator{\Span}{Span}
\DeclareMathOperator{\Aut}{Aut}
\DeclareMathOperator{\ch}{ch}
\DeclareMathOperator{\CH}{CH}
\DeclareMathOperator{\Hom}{Hom}
\DeclareMathOperator{\Ind}{Ind}
\DeclareMathOperator{\Lie}{Lie}
\DeclareMathOperator{\Exp}{Exp}
\DeclareMathOperator{\OS}{OS}

\DeclareMathOperator{\Res}{Res}
\DeclareMathOperator{\rk}{rk}
\DeclareMathOperator{\sgn}{sgn}
\DeclareMathOperator{\Specht}{Specht}
\DeclareMathOperator{\tr}{Tr}
\DeclareMathOperator{\Vrep}{VRep}

\DeclareMathOperator{\supp}{supp}
\DeclareMathOperator{\Logg}{Log}
\newcommand{\C}{\mathbb{C}}

\newcommand{\G}{\mathcal{G}}
\newcommand{\LL}{\mathcal{L}}
\newcommand{\N}{\mathbb{N}}

\newcommand{\Z}{\mathbb{Z}}
\newcommand{\even}{\textnormal{even}}
\newcommand{\odd}{\textnormal{odd}}
\newcommand{\Q}{\mathbb{Q}}
\theoremstyle{plain}
\newcommand{\F}{\mathbb{F}}

\theoremstyle{definition}
\newtheorem{theorem}{Theorem}[section]

\newtheorem{definition}[theorem]{Definition}
\newtheorem{lemma}[theorem]{Lemma}

\theoremstyle{remark}
\newtheorem{remark}[theorem]{Remark}
\newtheorem{example}[theorem]{Example}

\title{Equivariant Chow functions and Wonderful Models \\ in type \texorpdfstring{$A$}{A} and \texorpdfstring{$B$}{B}}
\author{Roberto Pagaria, Tommaso Scognamiglio}
\date{\today}

\begin{document}

\begin{abstract}

We prove a general inversion formula in the equivariant incidence algebras of geometric lattices that relates the equivariant Chow function with the equivariant super-reduced characteristic function. 
We apply this result to the case of minimal wonderful models for hyperplane arrangements associated to root systems, with the natural action of the corresponding Weyl group.
Applying this formula to type $A$ (i.e.\ the partition lattices), we recover Getzler's compositional identity in type $A$.
In type $B$ (i.e.\ the signed partition lattice), we recover Henderson's result, expressing the equivariant Chow generating series in terms of the same series in type $A$ and the equivariant characteristic generating series of type $B$.
Moreover, we obtain a new compositional identity for the equivariant Chow generating series.
\end{abstract}

\maketitle

\section{Introduction}

In this paper, we study finite group actions on hyperplane arrangements, through combinatorial techniques coming from matroid theory. We start by giving a brief historical overview of the subject.

The study of hyperplane arrangements is a classical field in topology, algebraic geometry and combinatorics, whose interest originally came from the study of braid groups and configuration spaces. More precisely, for an arrangement $\mathcal{A}$ of hyperplanes of a vector space $V$, we are interested in the study of the complement $\displaystyle M(\mathcal{A})=V \setminus \bigcup \nolimits_{H \in \mathcal{A}} H$ and its projectivization $\mathbb{P}(M(\mathcal{A}))$.
A pioneering step in the understanding of these objects came from the work of Orlik and Solomon \cite{OrlikSolomon} who showed that the cohomology rings of $M(\mathcal{A})$ and of $\mathbb{P}(M(\mathcal{A}))$ can be described combinatorially through the lattice of intersections $L(\mathcal{A})$, whose elements are the subspaces given by intersections of hyperplanes in $\mathcal{A}$, ordered by inclusion. 

De Concini and Procesi \cite{DeConciniProcesi} introduced compactifications associated with hyperplane arrangements, called \textit{wonderful models}, inspired by the Deligne--Mumford compactification of the configuration spaces of points of $\C$.
These models depend on the choice of a \textit{building sets}, i.e. a subset $\G \subseteq L(\mathcal{A})$ with nice combinatorial properties. De Concini and Procesi also described the cohomology rings of these compactifications, in terms of the combinatorics of $L(\mathcal{A})$ and $\G$.
Yuzvinsky \cite{Yuzvinsky97} later found an explicit additive basis for these cohomology rings using Gr\"obner basis.

These construnctions were later generalised to a purely combinatorial setting, first to geometric lattices \cite{FeichtnerYuzvinsky} and recently to polymatroids \cite{PagariaPezzoli}. 
In this context, an interesting invariant is the incidence algebra $I(\LL)$; it is the free module with basis indexed by the intervals of the lattice $\LL$.
Given a building set $\G$, we define functions $\overline{\chi}^\G_\LL,H^\G_\LL \in I(\LL)$ which keep track of the reduced characteristic functions and the Chow polynomial respectively of the intervals of the lattice $\LL$ for the building set $\G$. 
In the case of an hyperplane arrangement, i.e. when $\LL=L(\mathcal{A})$, the invariant $\overline{\chi}^\G_\LL$ basically keeps track of the cohomology of the complement of sub-arrangements of contracted arrangements of $\mathcal{A}$ and $H^\G_\LL$ of the cohomology of associated wonderful models. 

Recently, in \cite{EFMPV25}*{Theorem 1.1}, the authors showed that $\overline{\chi}^\G_\LL$ and $H^\G_\LL$ are inverses one to each other in the incidence algebra and, from this general statement 
they expressed the Chow polynomial of $\LL$ and the building set $\G$ in terms of the reduced characteristic functions of its sublattices. From this, they obtained several combinatorial results about the Poincaré polynomial of wonderful models of hyperplane arrangements.

As mentioned at the beginning, in this paper, we are interested in the study of these objects under the action of a finite group $W$, i.e.\ a group with a linear action on $V$ which preserves $\mathcal{A}$ or more generally an action of $W$ on a lattice $\LL$.
In this setting, there is a natural notion of the equivariant incidence algebras $I^W(\LL)$  introduced in \cite{GPY17} and in \cite{EquivariantIncidenceAlgebras}, see Section \ref{chapter-incidence-algebra}  for definition and notations. We define functions $\overline{\chi}^{\G}_{\LL}$ and $H^{\G}_{\LL}$ in the equivariant incidence algebra $I^W(\mathcal{L})$, which generalize to the equivariant setting the definitions of \cite{EFMPV25}. Our main theorem for equivariant incidence algebras is the following result, generalizing \cite{EFMPV25}*{Theorem 3.6}.

 \begin{theorem}
We have an equality
\begin{equation}
\label{intro-equation-incidence}
    \overline{\chi}^\G_\LL=(H^\G_\LL)^{-1}.
\end{equation}
 \end{theorem}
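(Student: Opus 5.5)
We will prove that $H^\G_\LL * \overline{\chi}^\G_\LL = \delta$ in $I^W(\LL)$; a one-sided inverse suffices because invertibility in the equivariant incidence algebra is detected on the diagonal. Equivalently, for every interval $[x,y]$ we show
\[
\sum_{x\le z\le y} H^\G_{[x,z]} \otimes \overline{\chi}^\G_{[z,y]} \;=\; \delta_{x,y}
\]
as a virtual representation of $W_{x,y}$, the stabilizer of the pair $(x,y)$. Here the convolution in $I^W(\LL)$ is the induced one: it sums over $W_{x,y}$-orbits of $z$, and each term is induced from $W_{x,z,y}$. First we reduce to the case $x=\hat 0$, $y=\hat 1$. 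By the definitions, both functions restricted to $[x,y]$ are the invariants of the interval with the induced building set, so the problem is really a statement about a single lattice $\LL$ with a $W$-action.

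The main step is an equivariant version of the argument of \cite{EFMPV25}*{Theorem 3.6}. We use a $W$-equivariant presentation of the Chow ring. Yuzvinsky's (Feichtner--Yuzvinsky) basis is not $W$-stable, so we avoid it and instead use the equivariant recursion for the Chow ring coming from the blow-up / semi-small decomposition. Concretely, as graded $W$-representations,
\[
\CH(\LL,\G) \;\cong\; \mathbb{1} \;\oplus\; \bigoplus_{[F]\in \G/W} \Ind_{W_F}^{W}\Big( \CH(\LL_{[F,\hat 1]},\G)\otimes \big(t+\dots+t^{\rk F-1}\big)\otimes \CH(\text{lower part})\Big),
\]
which is a form of the equivariant Feichtner--Yuzvinsky/Keel-type decomposition. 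This gives a recursion of the form
\[
H^\G_\LL = \text{(unit)} + \sum_{F\in\G}(\text{local term at }F)*H,
\]
and the local terms are exactly the contributions of $\overline{\chi}$ after expanding $\overline{\chi}$ through its own recursion. For $\overline{\chi}$ we use the equivariant Orlik--Solomon/Whitney-homology description together with the super-reduced factorization over $\G$-nested decompositions.

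The alternative route, which we expect to be cleaner, is geometric or topological. Both sides are compatible with the equivariant Euler characteristic of a stratification. The wonderful model is stratified by nested sets, each stratum being a product of projectivized complements of contracted arrangements. Additivity of equivariant virtual Poincaré/Hodge characters over a $W$-stable stratification then yields $H = \sum_{\text{nested}} \prod \overline{\chi}$. In incidence-algebra form this reads $H = \zeta$-type $+\,(H-\delta)*\overline{\chi}$-type, which rearranges to $H * \overline{\chi} = \delta$. For general geometric lattices, where no geometry is available, we would transport this identity combinatorially. Since the nested-set stratification is defined purely in terms of $\LL$ and $\G$ and is $W$-equivariant, the identity becomes a sum over $W$-orbits of nested sets of induced tensor products.

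We expect the main obstacle to be equivariance of the orbit bookkeeping. In the non-equivariant proof, terms are matched by a bijection of chains or nested sets, and signs cancel via sign-reversing involutions. Equivariantly, such involutions must be replaced by isomorphisms of representations, and stabilizers act on the orientation or determinant lines. Twisted sign characters, for example on Orlik--Solomon top degrees, must match exactly. We would handle this first by checking the identity on characters: we fix $w\in W$, pass to the fixed sublattice $\LL^w$, and verify the resulting trace identity as a non-equivariant Möbius-type inversion there. The remaining work is to check that the trace of $w$ on $H$ and on $\overline{\chi}$ is computed by a weighted count on $\LL^w$ to which the argument of \cite{EFMPV25} applies.
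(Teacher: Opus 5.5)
Your proposal does not actually contain a proof. Each of the three routes stops where the work begins, and the one you call the main step rests on a false premise. The Feichtner--Yuzvinsky basis \emph{is} $W$-stable. Its description (monomials $\prod_{F\in S}x_F^{g(F)}$ with $S$ $\G$-nested and $0<g(F)<\rk F-\rk(\bigvee S_{<F})$) does not depend on a monomial order, and $W$ preserves rank and nestedness, so $W$ permutes these monomials. Hence $\CH^\G_\LL$ is a graded permutation representation, which gives the explicit orbit formula for $H^\G_\LL$ that the paper's proof starts from.

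The recursion you use instead is never specified (``lower part''), and a recursion of the shape you write is false for non-maximal building sets. By that shape I mean one term per $F\in\G$, built from $\CH([F,\hat{1}])\otimes(x+\dots+x^{\rk F-1})$ and a lower factor with constant term $1$ and nonnegative coefficients. Take $\LL=L_1\times L_2$ with $L_i$ the lattice of flats of $U_{2,3}$, $\G=\G_{\min}$ and $\max\G=\{G_1,G_2\}$. Then $H^\G_\LL=(1+x)^2$. However, the terms $F=G_1$ and $F=G_2$ each contain $x\cdot H_{[G_i,\hat{1}]}=x(1+x)$, so the coefficient of $x^2$ is at least $2$: the monomial $x_{G_1}x_{G_2}$ is counted twice. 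A nested set has no canonical first element. One must sum over all $R\subseteq\min S$ with signs $(-1)^{|R|-1}\sgn_R$ and cancel via the Koszul complex (Lemma \ref{lemma:sum_of_signs_vanishes}). This is the paper's identity $\alpha^\G_\LL*H^\G_\LL=-\zeta_\LL$ (Lemma \ref{lemma:alpha_H_equal_zeta}), and your plan has no counterpart of the auxiliary function $\alpha$.

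The assertion that the local terms are exactly the contributions of $\overline{\chi}$ is the real content of the theorem, and you only assert it. The paper proves it as $\zeta_\LL*\overline{\chi}^\G_\LL=\alpha^\G_\LL$ (Lemma \ref{lemma:zeta_chi_equal_alpha}), in four steps:
\begin{itemize}
\item group the flats $F$ by the set of elements of $\max\G$ lying below them;
\item use $[\hat{0},\hat{1}]\cong\prod_{G\in\max\G}[\hat{0},G]$;
\item apply Lemma \ref{lemma:reduced_chi_is_well_defined} in the form $\sum_{F<G}\overline{\chi}_{[F,G]}=\frac{x^{\rk G}-1}{x-1}\mathds{1}$, with the usual induction weights;
\item cancel again with Lemma \ref{lemma:sum_of_signs_vanishes}.
\end{itemize}
Then $\zeta*\overline{\chi}*H=-\zeta$, and invertibility of $\zeta$ finishes the argument.

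Your fallback routes do not close the gap. The stratification argument needs a complex realization with a linear $W$-action, and ``transporting it combinatorially'' to general geometric lattices is precisely the statement to be proved. The trace route lands on $\LL^w$, which is generally not geometric: a transposition acting on $\Pi_3$ fixes only a chain of length $2$. Moreover, $\tr(w)$ on $\overline{\chi}^\G_\LL$ involves substitutions $x\mapsto x^k$ and signs coming from the cycles of $w$ on $\max\G$. So the non-equivariant result of \cite{EFMPV25} does not apply as is; you would have to prove a twisted identity, which amounts to redoing the equivariant argument.

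Finally, check the normalization. Since $(\overline{\chi}^\G_\LL)_{FF}=-\mathds{1}_{W_F}$ and $(H^\G_\LL)_{FF}=\mathds{1}_{W_F}$, the product has diagonal $-\mathds{1}$. What holds is $\overline{\chi}*H=-\delta$, i.e.\ $H^\G_\LL=-(\overline{\chi}^\G_\LL)^{-1}$ as in Theorem \ref{theorem_inversion_Chow}. Your target $H*\overline{\chi}=\delta$ already fails on the diagonal.
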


Equation (\ref{intro-equation-incidence}) allows us to relate combinatorially the equivariant Chow polynomial and the equivariant reduced characteristic polynomial of the lattice $\LL$, in a way similar to \cite{EFMPV25}. 

In the second part of the paper, we apply this general machinery to the lattices of the hyperplane arrangements of the root systems of type $A$ and $B$ and the action of the corresponding Weyl groups. 

In type $A$, recall that the projectivization of the complement of the hyperplane arrangement of type $A_{n-1}$  and its minimal compactification are $\mathcal{M}_{0,n+1}$ and $\overline{\mathcal{M}}_{0,n+1}$ respectively, i.e.\ the moduli spaces of smooth, respectively stable curves of genus $0$ with $n+1$ marked points.
These moduli spaces are  central and well-studied objects in algebraic geometry, topology and combinatorics.
In 1992, Keel \cite{keel} studied the cohomology of these spaces and obtained a recurrence relation for their Betti numbers.
In 1995, Manin and Getzler showed that is easier and more effective to study the exponential generating series of their Poincaré polynomials than studying each polynomial separately.
Manin \cite{manin} proved a functional equation satisfied by this generating series. 
The symmetric group $S_{n+1}$ acts on $\overline{\mathcal{M}}_{0,n+1}$ by permuting the marked points.
In the same year, Getzler \cite{GetzlerOperads} -- using operadic methods -- established a compositional identity for the $S_{n+1}$-equivariant generating series $H^A$:
\[H^A(-\overline{\chi}_A)=p_1[X]\]
where $\overline{\chi}_A$ is the $S_{n+1}$-equivariant exponential generating series of the reduced characteristic polynomial of type $A_n$. Using Frobenius characteristic of representations of the groups $S_n$ in the ring of symmetric functions, the composition above is the \textit{plethystic composition} of formal power series with coefficients given by symmetric functions, for more details see Section \ref{compositional-formulas-paragraph}.
Recently, Aluffi, Marcolli, and Nascimento \cite{AluffiMarcolliNascimento25} obtained a closed formula for Poincaré polynomials of $\overline{\mathcal{M}}_{0,n+1}$.
This formula can also be derived by applying the Lagrange inversion to the Getzler's compositional identity \cite{EFMPV25}.

In Section  \ref{sec:typeAB}, we show how to use our incidence algebras method to recover the Getzler's result, see Theorem \ref{compositional-latticeA},   and the Manin's identity, see eq.(\ref{manin-equivariant}). Before this result, in Section \ref{compositional-formulas-paragraph}, we also give an explicit and combinatorial formula for the coefficients of the plethystic composition of two formal power series with coefficients in the ring of symmetric functions, see Theorem \ref{compositionalformulaA}. Although this result was likely already known to experts, we were unable to find a specific reference  in the literature. We therefore show it here for the sake of clarity and completeness.

In type $B$, the projectivization of the complement of the arrangement of type $B_n$ and its minimal wonderful model still have an interpretation as moduli spaces in terms of curves of genus $0$ with marked points and involutions, see for instance \cite{HendersonRains08RealDPModelsCoxeter}*{Section 5}. There is significantly less information available in the literature on these varieties. Yuzvinsky \cite{Yuzvinsky97} computed the generating series of the Poincaré polynomials of type $B$ wonderful models associated with the minimal building set. 
In the equivariant setting, i.e.\ keeping track of the hyperoctaedral group $\Z_2 \wr S_n$ action on these spaces, Henderson, using the language of species and an explicit stratification of these moduli spaces, had shown an identity of the following form, see  \cite{Henderson}*{Theorem 2.4}  \begin{equation}
     H^B=\dfrac{-1}{\overline{\chi}^B(H^A[X+Y])}
     \label{intro-eq-typeB}
 \end{equation}
 Recently, Ferroni, Pagaria and Vecchi \cite{Ferroni-Pagaria-Vecchi} found several combinatorial identities that relate the exponential generating series for the Poincaré polynomial of the wonderful model of type $B$ to the reduced characteristic generating series of type $B$ and to the series for the wonderful model of type $A$.

We give an alternative proof of eq.(\ref{intro-eq-typeB}) using incidence algebras, see Theorem \ref{thm:inverted-equation-B}. Moreover, using our equivariant incidence algebra techniques, we show a compositional identity of the following form, see Theorem \ref{identity-typeB}
\begin{equation*}
    \overline{\chi}^B H^B(-\overline{\chi}^A[X+Y])=-1.
\end{equation*}

To show these results, we provide a definition of plethystic operations of type $B$, that is, using the Frobenius characteristic, for bisymmetric functions, see Section \ref{sectionplethysm}. This definition was already partially presented in \cite{Henderson}. In the latter article, however, the definitions given are presented via species, and it appears that the representation theoretic interpretation of type $B$ plethystic operations—namely, Theorem \ref{lem:ind_B_j_S_k} and Theorem \ref{lem:ind_S_j_B_k}—are not explicitly presented. Furthermore,  we provide an explicit combinatorial description of the plethystic composition of two formal power series with coefficients in the ring of bisymmetric functions, see Theorem \ref{compositional-formulatypeB}. 

\section{Equivariant Incidence algebra}
\label{chapter-incidence-algebra}
The equivariant incidence algebras were introduced by Proudfoot \cite{EquivariantIncidenceAlgebras} to study Kazhdan--Lusztig--Stanley polynomials.
We assume that all poset are finite with top $\hat{1}$ and bottom $\hat{0}$ elements.

\begin{definition}
    Let $\LL$ be a finite poset with a compatible action of a finite group $W$.
    Let $\mathcal{C}^W (\LL)$ be the category whose objects are graded $\C$-vector spaces
    \[ V = \bigoplus_{F \leq G} V_{FG}\]
    with an action of the group $W$ such that $\sigma(V_{FG}) \subseteq V_{\sigma(F) \sigma(G)}$.
    The morphisms are $W$-equivariant linear maps that preserve the grading.
    The \emph{equivariant incidence algebra} is the Grothendieck ring $I^W(\LL)$ of the category $\mathcal{C}^W (\LL)$.
\end{definition}

Let $W_F$ be the stabilizer of $F \in \LL$; we denote the intersection of stabilizers by $W_{FG} = W_F \cap W_G$ and similarly for the other indices.
Notice that for every $f \in I^W(\LL)$ the summand $f_{FG}$ belongs to the Grothendieck ring $\Vrep(W_{FG})$ of $\C$-representations of $W_{FG}$.
In \cite{EquivariantIncidenceAlgebras}*{Proposition 2.3}, the category $\mathcal{C}^W (\LL)$ was endowed of a monoidal structure that induces a convolution product on the equivariant incidence algebra given by
\[ (f*g)_{FG} = \sum_{F \leq H \leq G } \frac{\lvert W_{FGH} \rvert}{\lvert W_{FG} \rvert} \Ind_{W_{FGH}}^{W_{FG}}\left( \left( \Res_{W_{FH}}^{W_{FGH}} f_{FH} \right) \otimes \left( \Res_{W_{GH}}^{W_{FGH}} g_{GH} \right) \right).\]

In what follows, we assume that the lattice $\LL$ is geometric with rank function $\rk \colon \LL \to \N$ and that the $W$-action preserves the rank. The \emph{Orlik--Solomon} algebra of a geometric lattices $\LL$ was introduced in \cite{OrlikSolomon}.
It is defined as the exterior algebra over the atoms of $\LL$ with relations generated by 
\[\partial e_C = \sum_{i=1}^{k} (-1)^{i-1} e_{c_1} e_{c_2} \dots e_{c_{i-1}} e_{c_{i+1}} \dots e_{c_k}\]
where $C=(c_1,c_2, \dots, c_k)$ is a circuit, i.e.
\[ \OS_\LL = \faktor{\Lambda[e_c \mid c \textnormal{ atom}]}{(\partial e_C \mid C \textnormal{ circuit})} \]
We denote by $\OS_{\LL,p}$ the degree $p$ part of the Orlik--Solomon algebra.

For any $F \leq G$ in $\LL$ the interval $[F,G]$ is a geometric lattice with the action of the group $W_{FG}$.
Gedeon, Proudfoot and Young \cite{GPY17} defined the equivariant characteristic polynomial of a geometric lattice $\LL$ by 
\[ \chi_\LL(x) = \sum_{p=0}^{\rk(\LL)} (-1)^p x^{\rk(\LL)-p} \OS_{\LL,p} \in \Vrep(W)[x] \]
and in \cite{EquivariantIncidenceAlgebras}*{Proposition 4.1} the corresponding element in $I^W(\LL)\otimes \Z[x]$ was defined as
\[ (\chi_\LL(x))_{FG} = \begin{cases}
    \mathds{1}_W & \textnormal{if } F=G, \\
    \chi_{[F,G]}(x) & \textnormal{otherwise}.
\end{cases} \]

Define the equivariant characteristic polynomial of a polymatroid by
\[ \chi_\LL(x)= \sum_{F \in \LL} \frac{\lvert W_F\rvert}{\lvert W \rvert} \Ind_{W_F}^W \left(\sum_{p} (-1)^{p-1} \tilde{H}_p(\Delta(\hat{0},F)) \right) x^{\rk(\LL)-\rk(F)}. \]
The alternating sum of cohomology groups play the equivariant role of the M\"obius function.
The following lemma can be deduced from \cite{AssafSpeyer}*{Theorem 5.1}
\begin{lemma}
\label{lemma:reduced_chi_is_well_defined}
    For every finite bounded poset $\LL$ and a $W$-action we have
    \[ \sum_{F \in \LL} \frac{\lvert W_F \rvert}{\lvert W \rvert} \Ind_{W_F}^W \chi_{[F,\hat{1}]}(x) = x^{\rk \LL} \mathds{1}_{W_F}\]
    In particular, $\chi_\LL(1)=0$ if $\rk(\LL)>0$.
\end{lemma}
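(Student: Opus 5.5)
We need to prove: for a bounded poset $\LL$ with a $W$-action,
\[ \sum_{F} \frac{|W_F|}{|W|}\Ind_{W_F}^W \chi_{[F,\hat 1]}(x) = x^{\rk\LL}\mathds{1}.\]
Here $\chi$ is given by the Möbius-type definition: $\chi_{[F,\hat1]}(x)=\sum_{G\ge F} \frac{|W_{FG}|}{|W_F|}\Ind_{W_{FG}}^{W_F}\mu^{eq}(F,G)\, x^{\rk\hat1-\rk G}$, where $\mu^{eq}(F,G)=\sum_p(-1)^{p-1}\tilde H_p(\Delta(F,G))$ (with the convention $\mu^{eq}(F,F)=\mathds 1$).

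The plan is to swap the order of summation and reduce everything to the equivariant Möbius inversion identity. Substituting the definition and using transitivity of induction, the left side becomes
\[ \sum_{F\le G}\frac{|W_{FG}|}{|W|}\Ind_{W_{FG}}^W \mu^{eq}(F,G)\, x^{\rk\hat1-\rk G}.\]
Regrouping by $G$ and writing $\Ind_{W_{FG}}^W=\Ind_{W_G}^W\Ind_{W_{FG}}^{W_G}$ gives
\[ \sum_G \frac{|W_G|}{|W|}\Ind_{W_G}^W\Big(\sum_{F\le G}\frac{|W_{FG}|}{|W_G|}\Ind_{W_{FG}}^{W_G}\mu^{eq}(F,G)\Big)x^{\rk\hat1-\rk G}.\]
It therefore suffices to show that the inner $W_G$-virtual representation equals $\mathds 1$ if $G=\hat0$ and $0$ otherwise. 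This is the equivariant analogue of $\sum_{F\le G}\mu(F,G)=\delta_{\hat0 G}$.

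This vanishing is the main step, and it is where \cite{AssafSpeyer}*{Theorem 5.1} is invoked. I would prove it by applying the equivariant Lefschetz/Hopf trace formula to the order complex. The poset $[\hat0,G)$ has a cone point $\hat0$, so $\Delta([\hat0,G))$ is equivariantly contractible and its reduced equivariant Euler characteristic vanishes. Now stratify the chains of $[\hat0,G)$ according to their top element $F<G$. The chains with top $F$ are exactly the cone over $\Delta([\hat0,F))$ with apex $F$. Alternatively, stratify by the bottom element: chains of $(\hat0,G)$ with minimum $F$ give the suspension-type contribution $\mu^{eq}(F,G)$ of the open interval $(F,G)$.

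To do this carefully, compare characters pointwise. For $\sigma\in W_G$, the character of the inner sum at $\sigma$ equals $\sum_{F\le G,\ \sigma F=F}\tr(\sigma,\mu^{eq}(F,G))$, by the Frobenius formula for induced characters; the orbit factor $|W_{FG}|/|W_G|$ exactly cancels the overcounting. By Hopf's trace formula, $\tr(\sigma,\mu^{eq}(F,G))$ is the Möbius function $\mu_{\LL^\sigma}(F,G)$ of the fixed subposet $\LL^\sigma$, because $\Delta(F,G)^\sigma=\Delta((F,G)^\sigma)$. Since $\LL^\sigma$ is again bounded and contains $\hat0$, the classical identity $\sum_{F\in[\hat0,G]^\sigma}\mu_{\LL^\sigma}(F,G)=\delta_{\hat0G}$ gives the claim.

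The obstacle is justifying the fixed-point/trace step, $\tr(\sigma,\tilde\chi(\Delta(F,G)))=\tilde\chi(\Delta(F,G)^\sigma)$. This is standard for simplicial actions in which a fixed simplex is fixed pointwise, which holds here because chains are totally ordered and the action preserves order. It is exactly the content of \cite{AssafSpeyer}.

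For the final claim, put $x=1$. When $\rk\LL>0$, the term $F=\hat0$ of the identity equals $\chi_\LL(1)$, and every other term is $\Ind\chi_{[F,\hat1]}(1)$ for an interval of smaller rank. Induct on rank: intervals of positive rank contribute $0$, and the term $F=\hat1$ contributes $\mathds 1$. Comparing with the right side, $1^{\rk}\mathds1=\mathds1$, forces $\chi_\LL(1)=0$. The base case of rank $1$ is the same computation.
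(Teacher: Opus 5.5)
Your proof is correct, and it goes further than the paper, which gives no argument of its own and only says the lemma ``can be deduced from'' \cite{AssafSpeyer}*{Theorem 5.1}. Your proposal is self-contained:
\begin{itemize}
\item after the (valid) regrouping of the double sum by $G$, you evaluate characters at $\sigma\in W_G$, where the weighted inductions $\frac{|W_{FG}|}{|W_G|}\Ind_{W_{FG}}^{W_G}$ collapse to a plain sum over the fixed subposet $\LL^\sigma$;
\item the Hopf trace formula identifies the trace of the equivariant M\"obius function with $\tilde\chi(\Delta((F,G)^\sigma))$, with no orientation signs because a chain fixed setwise is fixed pointwise, and Hall's theorem in $\LL^\sigma$ turns this into $\mu_{\LL^\sigma}(F,G)$;
\item the classical identity $\sum_{\hat0\le F\le G}\mu_{\LL^\sigma}(F,G)=\delta_{\hat0G}$ finishes the argument.
\end{itemize}
This shows the lemma is purely poset-theoretic and holds character by character; it needs only an action on a graded bounded poset, with no geometric or Cohen--Macaulay input. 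The citation is shorter but hides exactly this equivariant M\"obius inversion.

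You could even skip the regrouping and the induction. At $\sigma\in W$ the left side has character $\sum_{G\in\LL^\sigma}x^{\rk\LL-\rk G}\sum_{F\in[\hat0,G]^\sigma}\mu_{\LL^\sigma}(F,G)=x^{\rk\LL}$. Likewise $\chi_\LL(1)$ has character $\sum_{G\in\LL^\sigma}\mu_{\LL^\sigma}(\hat0,G)=0$, because $\hat0\neq\hat1$ in $\LL^\sigma$. The cone/stratification sketch before your character computation is superfluous.

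One correction: you copied the sign $(-1)^{p-1}$ from the paper's definition of the equivariant characteristic polynomial. With that sign your key step is off: the Hopf trace formula gives $\tr(\sigma,\mu^{eq}(F,G))=-\tilde\chi(\Delta((F,G)^\sigma))=-\mu_{\LL^\sigma}(F,G)$ for $F<G$. Taken literally, that definition makes the statement false already in rank one. There it gives $\chi_\LL(x)=x+1$, since $\tilde H_{-1}(\emptyset)$ enters with sign $(-1)^{-2}=+1$, whereas the Orlik--Solomon definition gives $x-1$. The equivariant M\"obius function must be $\mu^{eq}(F,G)=\sum_p(-1)^p\tilde H_p(\Delta(F,G))$ for $F<G$, and $\mathds{1}$ for $F=G$. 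With this sign your argument goes through verbatim.
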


By \cite{GPY17}*{Lemma 2.1} $\chi_\LL(1)=0$ and hence the element 
\[ \overline{\chi}_\LL(x) = \frac{\chi_\LL(x)}{x-1} \in \Vrep (W) [x] \]
is well-defined because $x-1$ is not a zero divisor.
Recall the following definition of a building set $\G$ for the lattice $\LL$.

\begin{definition}
A subset $\G \subseteq \LL \setminus \{\hat{0}\}$ is called building set if, for every $X \in \LL \setminus \{\hat{0}\}$, the morphism given by taking the joint $$\phi_X:\prod_{\substack{\max \text{ } G \in \G \\ G \leq X}}[\hat{0},G] \to [\hat{0},X]$$ is an isomorphism.
\end{definition}

\begin{remark}
For any  lattice $\LL$, we have the \textit{maximal} building set $\G_{\max}=\LL \setminus \{\hat{0}\}$. We also have the \textit{minimal} building set $\mathcal{G}_{\min}$, which consists of the irreducible elements of $\LL$, i.e. of the elements $X \in \LL \setminus \{\hat{0}\}$ such that the interval $[\hat{0},X]$ does not split as a direct product of smaller intervals.    
\end{remark}

Pick a building set $\G$ on the geometric lattice $\LL$ preserved by the $W$-action, i.e. $\sigma(\G) = \G$ for all $\sigma \in W$. Remark that, for any geometric lattice $\LL$ with a compatible $W$-action, the minimal building set $\G_{\min}$ is always preserved by the $W$-action.
For any $F \leq G$ the induced building set $\G^G_F$ on $[F,G]$ is the $W_{FG}$-invariant set
\[ \mathcal{G}_F^G := \{ F \vee H \mid H \in \G, H\leq G, H \not \leq F \}.\]
For any $W$-set $T$ we denote by $\sgn_T$ the sign representation of $W$ for the action on $T$.

The super-reduced characteristic polynomials and functions were introduced in \cite{EFMPV25}*{Definition 3.1 and 3.2}, we generalize them to the equivariant setting.
\begin{definition}
    The \emph{equivariant super-reduced characteristic polynomial} of $\LL$ is 
    \[\overline{\chi}^\G_\LL(x) = (-1)^{\lvert \max \G \rvert -1} \left[ \bigotimes_{G \in \max \G} \overline{\chi}_{[\hat{0},G]}(x) \right] \otimes \sgn_{\max \G} \in \Vrep (W) [x]\]
    with the natural $W$-action on the tensor product and the \emph{equivariant super-reduced characteristic function} is 
    \[(\overline{\chi}^\G_\LL)_{FG} = \begin{cases}
        -\mathds{1}_{W_F} & \textnormal{if } F=G, \\
        \overline{\chi}^{\G_F^G}_{[F,G]}(x) & \textnormal{otherwise}.
    \end{cases} \]
\end{definition}

The Chow ring of a geometric lattice with building set was introduced by De Concini and Procesi \cite{DeConciniProcesi} in the realizable setting, and then extended to atomic lattices by Feichtner and Yuzvinsky \cite{FeichtnerYuzvinsky}.
A generalization to polymatroids appeared in \cite{PagariaPezzoli}.

Recall that the Chow ring (over $\Q$) of a built polymatroid $(\LL,\G)$ is the graded ring
\[\CH_\LL^\G =\dfrac{\Q[x_G]_{G \in \G}}{I(\LL,\G)} \]
where $I(\LL,\G)$ is generated by the elements of the form $$x_{G_1}\cdots x_{G_k}\left(\sum_{H \geq G}x_H\right)^b$$ for each subset $S=\{G_1,\dots,G_m\}$ with $b=\rk(G)-\rk(\sup(S,\G))$, $ \sup(S,\G)=\bigvee_{\substack{G' \in S \\ G' <G}}G'$. Notice that, since the action of $W$ preserves the rank, the action of $W$ naturally extends to an action on $\CH_\LL^{\G}$ (which respects the grading). See also \cite{ANR25} for an equivariant description of the Chow ring of matroids.

\begin{definition}
The \emph{equivariant Chow polynomial} of $(\LL,\G)$ is 
\[H_\LL^\G (x) =\sum_i (H_\LL^\G)^i x^i \in \Vrep(W)[x] \] 
The \emph{equivariant Chow function} of $(\LL,\G)$ is defined by 
\[(H_\LL^\G )_{FG} = H_{[F,G]}^{\G_F^G} (x)\]
\end{definition}

A major result of \cites{Yuzvinsky97,FeichtnerYuzvinsky,PagariaPezzoli} is that the Chow ring admits an explicit Gr\"obner basis, i.e. an additive basis given by a list of monomials.
In particular, we have that a basis of $\CH_{\LL}^\G$ is given by the monomials $$x_{F_1}^{e_1} \cdots x_{F_m}^{e_m}, $$ where $S=\{F_1,\dots,F_m\}$ is $\G$-nested and $$0 \leq e_i \leq \rk(F_i)-\rk(\sup(S,F_i)) .$$

Recall that a subset $S=\{F_1,\dots,F_m\}$ is $\G$-nested if for each incomparable subset $S' \subseteq $ of cardinality at least $2$, the join $\bigvee_{F \in S'}F$ does not belong to $\G$.

Since the action of $W$ is compatible with the lattice structure, every element $w \in W$ sends $\G$-nested sets into $\G$-nested sets. In particular, the action of $W$ on $\CH_{\LL}^{\G}$ is the permutation representations of the action on the elements of the Gr\"obner basis just introduced. We deduce the following Lemma.

\begin{lemma}
The \emph{equivariant Chow polynomial} of $(\LL,\G)$ is 
\[H_\LL^\G (x) = \sum_{S \, \G\textnormal{-nested}} \sum_{g \colon S \to \N} \frac{\lvert W_{Sg} \rvert}{\lvert W \rvert} \Ind_{W_{Sg}}^W x^{\lvert g \rvert} \mathds{1}_{W_{Sg}}, \]
where the sum is taken over all functions $g$ such that $0< g(F) < \rk(F) -\rk (\bigvee (S)_{<F}) $ for all $F \in S$ and the integer $\lvert g \rvert$ is $\sum_{F\in S} g(F)$.
\end{lemma}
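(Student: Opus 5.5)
The statement follows from the Gröbner basis description of $\CH_\LL^\G$ recalled just above, together with the standard formula for the character of a permutation representation. The plan has three steps.

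First, I set up a $W$-stable basis. Let
\[ B = \{(S,g)\} \]
be the set of pairs with $S$ a $\G$-nested set and $g\colon S\to\N$ an exponent function satisfying the stated bounds, so that $x^g=\prod_{F\in S} x_F^{g(F)}$ runs over the basis. Then $B$ is a $W$-stable basis of $\CH_\LL^\G$:
\begin{itemize}
\item Each $w\in W$ preserves rank, joins and $\G$, so it maps $\G$-nested sets to $\G$-nested sets.
\item It sends $(\bigvee S_{<F})$ to $(\bigvee (wS)_{<wF})$, so the bounds on $g(F)$ are preserved.
\item Since $w\cdot x_F=x_{wF}$, the action on these monomials is exactly $(S,g)\mapsto (wS, g\circ w^{-1})$.
\end{itemize}
The main point to justify is that the Gröbner basis monomials are permuted on the nose, not merely up to lower-order terms modulo the ideal. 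This holds because the action on the polynomial ring $\Q[x_G]$ is by permuting variables, and each basis element is itself a monomial. So the degree-$i$ piece $(H_\LL^\G)^i$ is the permutation representation on the set $B_i$ of pairs with $|g|=i$.

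Second, I decompose $B_i$ into $W$-orbits. A permutation representation on a transitive $W$-set with stabilizer $W_{Sg}$ is $\Ind_{W_{Sg}}^W \mathds{1}_{W_{Sg}}$. Hence $\C[B_i]$ is the sum of these induced representations over orbit representatives.

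Third, I convert the sum over orbits into a sum over all pairs. Each orbit $O$ contains $|W|/|W_{Sg}|$ elements, and all of them give isomorphic induced representations because their stabilizers are conjugate. Therefore
\[\Ind_{W_{Sg}}^W\mathds{1} \;=\; \sum_{(S',g')\in O} \frac{|W_{S'g'}|}{|W|}\Ind_{W_{S'g'}}^W \mathds{1}.\]
Summing over orbits and over $i$, with weight $x^{|g|}$, gives the formula. The factor $\frac{|W_{Sg}|}{|W|}$ is interpreted in $\Vrep(W)\otimes\Q$, as in the convolution product.

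I also need to match the range of $g$. The Gröbner basis allows $0\le e_i\le \rk(F_i)-\rk(\sup)$, while the lemma requires $0<g(F)<\rk(F)-\rk(\bigvee S_{<F})$. I will reconcile these by the usual convention: a monomial with some $e_i=0$ is identified with the nested set obtained by removing $F_i$, and the upper bound is interpreted with the paper's convention for $\sup(S,F)$. After this identification each monomial is counted exactly once, and the count is unchanged.
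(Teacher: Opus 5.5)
Your core argument is the paper's. $W$ permutes the monomial basis of $\CH_\LL^\G$, because nested sets go to nested sets and the exponent bounds are $W$-invariant, so each graded piece is a permutation representation. The identity $\Q[X]=\sum_{x\in X}\frac{\lvert W_x\rvert}{\lvert W\rvert}\Ind_{W_x}^W\mathds{1}$, which the paper cites as standard and your second and third steps derive, then gives the formula.

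Your last paragraph, however, does not work and should be replaced. Passing to the support does absorb zero exponents. The upper bound, though, cannot be reconciled by any convention: the paper's $\sup(S,F)$ already is $\bigvee S_{<F}$, and the non-strict bound $e_F\le \rk(F)-\rk(\bigvee S_{<F})$ gives a strictly larger set of monomials. Take a rank-one lattice with $\G=\{\hat 1\}$. The relation with $S=\emptyset$, $G=\hat 1$, $b=1$ gives $x_{\hat 1}=0$, so $\CH_\LL^\G=\Q$. The lemma's bounds give only the monomial $1$, whereas the non-strict bounds would also admit $x_{\hat 1}$. So the claim that ``the count is unchanged'' is false.

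The correct input is the Feichtner--Yuzvinsky basis itself. Its exponents satisfy $1\le e_F\le \rk(F)-\rk(\bigvee S_{<F})-1$, which are exactly the bounds in the lemma. The inequalities in the recalled statement in the text are an off-by-one slip. With the correct basis, your argument needs no reconciliation step.
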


\begin{proof}
The result comes from the standard general fact, that for any finite set $X$ with a $W$-action, the character of the corresponding representation $\Q[X]$ is 
\[\Q[X]=\sum_{x \in X}\dfrac{|W_x|}{|W|} \Ind_{W_x}^W(\mathds{1}) . \qedhere \]
\end{proof}

We also introduce the auxiliary function 
\[ \alpha_{\LL}^\G(x)= (-1)^{\lvert \max \G \rvert -1} \sgn_{\max \G} \otimes\sum_{g \colon \max \G \to \N} \frac{\lvert W_{g} \rvert}{\lvert W \rvert} \Ind_{W_{g}}^W x^{\lvert g \rvert} \mathds{1}_{W_{g}}  \in \Vrep(W)[x]\]
where the sum is taken over all functions $g$ such that $0< g(G) < \rk(G) $ for all $G \in \max \G$.
The corresponding element in $I^W(\LL)$ is 
\[ (\alpha_\LL^\G)_{FG} = \alpha_{[F,G]}^{\G_F^G}(x),\]
in particular $(\alpha_\LL^\G)_{FF}=- \mathds{1}_{W_F}$.

Let $\zeta_\LL \in I^W(\LL)$ be the zeta function defined by $ (\zeta_\LL)_{FG}= \mathds{1}_{W_{FG}}.$

\begin{lemma}
\label{lemma:sum_of_signs_vanishes}
    Let $S$ be a finite set with a action of $W$. If $S$ is non-empty then
    \[ \sum_{R \subseteq S} (-1)^{\lvert R \rvert} \frac{\lvert W_R \rvert}{|W|} \Ind_{W_R}^W \sgn_R =0.\]
\end{lemma}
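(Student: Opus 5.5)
We prove the identity at the level of characters: an element of $\Vrep(W)$ is zero exactly when its character vanishes at every $w \in W$. The plan is to fix $w$, evaluate the character of the left-hand side at $w$ by the standard induced-character formula, and show the resulting sum is zero.

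First we group the subsets $R \subseteq S$ into $W$-orbits. The summand $\frac{|W_R|}{|W|}\Ind_{W_R}^W \sgn_R$ depends only on the orbit of $R$. By Frobenius' formula its character at $w$ equals
\[ \frac{1}{|W|}\sum_{\substack{u\in W\\ u^{-1}wu \in W_R}} \sgn_R(u^{-1}wu). \]
Summing over all $R$ (not just orbit representatives), the normalization $|W_R|/|W|$ cancels the orbit count. So the character of the whole left-hand side at $w$ is
\[ \frac{1}{|W|}\sum_{u \in W}\ \sum_{\substack{R\subseteq S\\ u^{-1}wu\cdot R = R}} (-1)^{|R|}\sgn_R(u^{-1}wu). \]
Replacing $R$ by $uR$ shows the inner sum equals
\[ \sum_{R \subseteq S,\ wR=R} (-1)^{|R|}\sgn_R(w), \]
independently of $u$. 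It therefore suffices to prove that this sum vanishes for each fixed $w$.

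Next we decompose $S$ into the cycles $C_1,\dots,C_m$ of $w$; here $m\ge 1$ because $S\neq\emptyset$. A $w$-stable subset $R$ is exactly a union of some of these cycles. The sign of $w$ acting on $R$ is the product over the chosen cycles of $(-1)^{|C_i|-1}$. Hence the summand attached to $R=\bigcup_{i\in I}C_i$ is
\[ \prod_{i \in I} (-1)^{|C_i|}(-1)^{|C_i|-1} = (-1)^{|I|}. \]
The total is then $\sum_{I\subseteq\{1,\dots,m\}}(-1)^{|I|}=(1-1)^m=0$.

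The main point requiring care is the bookkeeping in the second paragraph: the normalization by $|W_R|/|W|$ together with induction must turn the orbit-weighted sum into a plain sum over all $w$-stable subsets. Once that is in place, the cycle computation is immediate.
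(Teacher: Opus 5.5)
Your argument is correct, but it takes a different route from the paper. The paper's proof is the single sentence ``it follows from the exactness of the Koszul complex on $\C^S$.'' The reasoning behind it is as follows. The left-hand side equals $\sum_k (-1)^k[\Lambda^k \C^S]$, because $\Lambda^k\C^S \cong \bigoplus \Ind_{W_R}^W \sgn_R$, summed over orbit representatives of $k$-subsets $R$. Indeed, the basis vector $e_R$ spans a line on which $W_R$ acts by $\sgn_R$, and the sum over all $R$ weighted by $\lvert W_R\rvert/\lvert W\rvert$ is the same as the sum over orbit representatives. The Koszul complex $\Lambda^\bullet \C^S$, for instance with differential given by contraction against the $W$-invariant functional $\sum_{s\in S} e_s^*$ (nonzero since $S\neq\emptyset$), is an exact complex of $W$-representations, so its Euler characteristic vanishes in $\Vrep(W)$.

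You work instead with characters. Your inner sum $\sum_{wR=R}(-1)^{\lvert R\rvert}\sgn_R(w)$ is exactly $\sum_k(-1)^k\tr(w\mid\Lambda^k\C^S)=\det(1-w\mid\C^S)$, and your cycle computation is the observation that each cycle of $w$ contributes a factor $1-1=0$ to this determinant.

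Your route is fully self-contained: it never needs to identify exterior powers of a permutation representation with induced sign representations. The paper's route produces an actual exact sequence rather than just an equality of virtual characters, so it works verbatim over any field. That generality is irrelevant here, since everything is over $\C$.

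The step you flag as delicate is harmless. The lemma already sums over all subsets $R$, not over orbit representatives, so that step is just an interchange of the sums over $R$ and $u$ followed by the substitution $R\mapsto uR$, which you carry out correctly.
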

\begin{proof}
    It follows from the exactness of the Koszul complex on $\C^S$.
\end{proof}

The following lemma generalize a key step in the proof of \cite{EFMPV25}*{Theorem 3.6}.

\begin{lemma}
\label{lemma:alpha_H_equal_zeta}
    We have
    \[\alpha^\G_\LL * H_\LL^\G= -\zeta_\LL. \]
\end{lemma}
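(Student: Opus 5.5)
We prove the identity entry by entry: for fixed $F\leq G$ we show $(\alpha^\G_\LL * H^\G_\LL)_{FG} = -\mathds{1}_{W_{FG}}$. Since the interval $[F,G]$ with building set $\G_F^G$ and group $W_{FG}$ is again a built geometric lattice, and since the induced building sets are transitive ($(\G_F^G)_F^H=\G_F^H$ and $(\G_F^G)_H^G=\G_H^G$), it suffices to treat $F=\hat 0$, $G=\hat 1$ with the full group $W$. The case $\hat0=\hat1$ is immediate, because $(\alpha)_{FF}=-\mathds{1}$ and $(H)_{FF}=\mathds 1$. We will also use that the $(H,\hat1)$ entry of $H^\G_\LL$ is the Chow polynomial of $[H,\hat 1]$ with building set $\G_H^{\hat1}$.

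The plan is to interpret both sides as signed permutation representations and then cancel. By the previous lemma, $H_{[H,\hat1]}$ is the permutation representation on pairs $(S',g')$, where $S'$ is $\G_H^{\hat1}$-nested with admissible exponents. Similarly, $\alpha_{[\hat0,H]}$ is, up to the sign $(-1)^{|\max\G_{\le H}|-1}\sgn_{\max}$, the permutation representation on functions $g$ on $\max\G_{\leq H}$ with $0<g<\rk$. The convolution formula, with the factors $\frac{|W_{H}|}{|W|}\Ind$, is exactly the formula for the orbit sum over triples $(H, g, (S',g'))$. Hence $(\alpha*H)_{\hat0\hat1}$ is the signed permutation character of a $W$-set of triples, with sign representation twisted by $\sgn_{\max\G_{\le H}}$.

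Next we build a $W$-equivariant correspondence between these triples and pairs $(S,g,R)$. Here $(S,g)$ is a Gröbner monomial of $\CH_\LL^\G$ and $R$ is a subset of a distinguished set attached to $(S,g)$. The intended map is as follows. Given $(S,g)$ and a $W_{Sg}$-stable set $R$ of maximal elements of $S$ "at the bottom", we put $H=\bigvee R$. The factors $x_G^{g(G)}$ with $G\in R$ give $g$ on $\max\G_{\le H}=R$; this uses the building set isomorphism $[\hat0,H]\cong\prod_{G\in R}[\hat0,G]$, together with nestedness, to identify $\max\G_{\le H}$ with $R$. The remaining elements, joined with $H$, give a $\G_H^{\hat1}$-nested set on $[H,\hat1]$. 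The exponent bounds must match: $g(G)<\rk G$ for the elements of $R$, and the bounds $\rk(F)-\rk(\sup)$ for the rest, computed relative to $H$. Checking this bijection, and in particular that it commutes with stabilizers so that the induced characters agree, is the main obstacle. I would model it on the nonequivariant argument in \cite{EFMPV25}*{Theorem 3.6}, verifying at each step that the constructions are canonical and hence $W$-equivariant.

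Once the correspondence is in place, we group the terms by the orbit of $(S,g)$. For fixed $(S,g)$ the admissible sets $R$ range over all subsets of a finite $W_{Sg}$-set $T(S,g)$, namely the minimal elements of $S$ with $g$ in the allowed range. The contribution is then
\[-\frac{|W_{Sg}|}{|W|}\Ind_{W_{Sg}}^W\sum_{R\subseteq T}(-1)^{|R|}\frac{|W_{SgR}|}{|W_{Sg}|}\Ind_{W_{SgR}}^{W_{Sg}}\sgn_R .\]
By Lemma \ref{lemma:sum_of_signs_vanishes} this vanishes whenever $T\neq\emptyset$. We must check that induction in stages makes this regrouping legitimate. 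The only surviving term has $T=\emptyset$, which occurs only for $S=\emptyset$, $H=\hat0$. It contributes $(\alpha)_{\hat0\hat0}\cdot 1=-\mathds{1}_W$, which gives the claim.
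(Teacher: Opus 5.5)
Your proposal is correct and follows essentially the same route as the paper: reduce to the entry $(\hat0,\hat1)$, read $\alpha$ and $H$ as (signed) permutation representations, and use the $W$-equivariant, stabilizer-preserving bijection between quadruples $(F,h,T,k)$ and triples $(S,g,R)$ with $R\subseteq\min S$ taken from \cite{EFMPV25}*{Theorem 3.6}; then regroup by $(S,g)$ and kill all terms with $S\neq\emptyset$ via \Cref{lemma:sum_of_signs_vanishes}. Two wording slips need fixing. First, $R$ must range over \emph{all} subsets of $\min S$, as your final sum correctly uses, not only the $W_{Sg}$-stable ones. Second, the extra condition ``with $g$ in the allowed range'' is automatic, since for $G\in\min S$ the Chow bound $\rk(G)-\rk(\hat0)$ coincides with the bound $\rk(G)$ in $\alpha$.
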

\begin{proof}
    It is enough to check the following equality for every built lattice
    \[ \sum_{F \in \LL} \frac{\lvert W_F\rvert}{\lvert W \rvert} \Ind_{W_F}^{W} \alpha_{[\hat{0},F]}^{\G^F} \otimes H_{[F, \hat{1}]}^{\G_F} = - \mathds{1}_W.\]
    By definitions $ (-1)^{\lvert f(F) \rvert -1} \sgn_{f(F)} \otimes  \alpha_{[\hat{0},F]}^{\G^F} \otimes H_{[F, \hat{1}]}^{\G_F} $ is equal to
    \begin{align*}
        &
        \left( \sum_{h \colon f(F) \to \N} \frac{\lvert W_{Fh} \rvert}{\lvert W_{F} \rvert} \Ind_{ W_{Fh} } ^{W_F} x^{\lvert h \rvert} \mathds{1}_{W_{Fh}} \right) 
        \otimes 
        \left( \sum_{\substack{T \, \G_F \textnormal{-nested} \\ k \colon T \to \N}} \frac{\lvert W_{FTk} \rvert}{\lvert W_{F} \rvert} \Ind_{ W_{FTk} } ^{W_F} x^{\lvert k \rvert} \mathds{1}_{W_{FTk}} \right).
    \end{align*}
    By standard theory, we have $ (\Ind_{ W_{Fh} } ^{W_F}  \mathds{1}_{W_{Fh}} ) \otimes ( \Ind_{ W_{FTk} } ^{W_F} \mathds{1}_{W_{FTk}} ) = \Ind_{ W_{Fh} } ^{W_F} \Res_{W_{Fh}}^{W_F} \Ind_{ W_{FTk} } ^{W_F} \mathds{1}_{W_{FTk}}$ and 
    $\Res_{W_{Fh}}^{W_F} \Ind_{ W_{FTk} } ^{W_F} \mathds{1}_{W_{FTk}} =  \sum_{\sigma \in W_F} \frac{\lvert W_{Fh} \cap \sigma W_{FTk} \sigma^{-1} \rvert}{\lvert W_{Fh} \rvert \rvert W_{FTk} \rvert} \Ind_{W_{Fh} \cap \sigma W_{FTk} \sigma ^{-1}}^{W_{Fh}} \mathds{1}_{W_{Fh} \cap \sigma W_{FTk} \sigma ^{-1}}$.
    Moreover, $\sigma W_{FTk} \sigma ^{-1}= W_{F \sigma T \sigma k}$ for all $\sigma \in W_F$.
    Hence, the representation $\alpha_{[\hat{0},F]}^{\G^F} \otimes H_{[F, \hat{1}]}^{\G_F} $ is equal to
    \begin{align*}
        &\sum_{h \colon f(F) \to \N} (-1)^{\lvert f(F) \rvert -1} \sgn_{f(F)} \otimes \Ind_{W_{Fh}}^{W_F} \sum_{\substack{T \, \G_F \textnormal{-nested} \\ k \colon T \to \N \\ \sigma \in W_F}} \frac{\lvert W_{Fh\sigma T \sigma k} \rvert}{\lvert W_{F} \rvert^2} \Ind_{W_{Fh \sigma T \sigma k} }^{W_{Fh}} \mathds{1}_{W_{Fh \sigma T \sigma k}} =\\
        &\sum_{h \colon f(F) \to \N} (-1)^{\lvert f(F) \rvert -1} \sgn_{f(F)} \otimes \Ind_{W_{Fh}}^{W_F} \sum_{\substack{T \, \G_F \textnormal{-nested} \\ k \colon T \to \N }} \frac{\lvert W_{Fh T  k} \rvert}{\lvert W_{F} \rvert} \Ind_{W_{Fh  T  k} }^{W_{Fh}} \mathds{1}_{W_{Fh  T  k}}.
    \end{align*}
    Summing over all flats, we have
    \begin{align*}
        \sum_{F \in \LL} \frac{\lvert W_F\rvert}{\lvert W \rvert} \Ind_{W_F}^{W} \alpha_{[\hat{0},F]}^{\G^F} \otimes H_{[F, \hat{1}]}^{\G_F} =
        \sum_{\mathclap{\substack{F \in \LL \\ h \colon f(F) \to \N \\ T \, \G_F \textnormal{-nested} \\ k \colon T \to \N }}} (-1)^{\lvert f(F) \rvert -1} \frac{\lvert W_{Fh T  k} \rvert}{\lvert W \rvert}  \sgn_{f(F)} \otimes \Ind_{W_{Fh  T  k} }^{W_{F}}  x^{\lvert h \rvert +\lvert k \rvert} \mathds{1}_{W_{Fh  T  k}}.
    \end{align*}
    There is a $W$-equivariant bijection between quadruple $F,h,T,k$ with $\G$-nested sets $S$ with function $g \colon S \to \N$ and a subset $R \subseteq \min S$. For more details on this see the proof of \cite{EFMPV25}*{Theorem 3.6}. In particular, $S=T \sqcup f(F)$, $g$ is the gluing of $h$ and $k$ and $R=f(F)$.
    Vice-versa $F= \bigvee R$, $T=S \setminus R$, $h=g_{|R}$ and $k=g_{|S\setminus R}$. The bijection also preserve the stabilizers.
    Finally, we obtain
    \begin{align*}
        \sum_{F \in \LL} \frac{\lvert W_F\rvert}{\lvert W \rvert} \Ind_{W_F}^{W} \alpha_{[\hat{0},F]}^{\G^F} \otimes H_{[F, \hat{1}]}^{\G_F} &= \sum_{\substack{S \, \G \textnormal{-nested} \\ g \colon S \to \N }} \frac{\lvert W_{Sg} \rvert}{\lvert W \rvert}  \Ind_{W_{Sg} }^{W} x^{\lvert g \rvert } 
        \sum_{\mathclap{R \subseteq \min S}} (-1)^{\lvert R \rvert -1} \frac{\lvert W_{SgR} \rvert}{\lvert W_{Sg}\rvert } \Ind_{W_{SgR}}^{W_{Sg}}  \sgn_{R}\\
        &= - \mathds{1}_W
    \end{align*}
    where in the second equality we used \Cref{lemma:sum_of_signs_vanishes} and the only term that does not vanish is associated to $\min S = \emptyset$ that implies $S= \emptyset$.
\end{proof}

\begin{lemma}
\label{lemma:zeta_chi_equal_alpha}
    We have
    \[\zeta_\LL * \overline{\chi}_\LL^\G=\alpha^\G_\LL.\]
\end{lemma}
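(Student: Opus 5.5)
The plan is to compare the $(\hat{0},\hat{1})$ entries of both sides for every built lattice $(\LL,\G)$. Since intervals of built lattices are again built lattices with the induced building sets, this suffices. By the convolution formula, and since $\zeta_{\hat 0F}$ is trivial, the claim becomes
\[ \sum_{F\in\LL}\frac{\lvert W_F\rvert}{\lvert W\rvert}\Ind_{W_F}^W \overline{\chi}^{\G_F}_{[F,\hat 1]}(x)=\alpha^\G_\LL(x), \]
where the term $F=\hat 1$ contributes $-\mathds{1}_W$. Both sides are virtual characters of $W$, so I will prove the equality of traces at each fixed $w\in W$. By the Frobenius formula for induced characters, the trace of the left side at $w$ is $\sum_{F:\,wF=F}\tr\big(w\mid \overline{\chi}^{\G_F}_{[F,\hat1]}\big)$.

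\emph{Irreducible case} ($\max\G=\{\hat 1\}$, rank $r$). For $F<\hat1$ we have $\hat1=F\vee\hat1\in\G_F$, so $\max\G_F=\{\hat1\}$ and $\overline{\chi}^{\G_F}_{[F,\hat1]}=\overline{\chi}_{[F,\hat1]}$. By \Cref{lemma:reduced_chi_is_well_defined},
\[ \sum_{F<\hat1}\frac{\lvert W_F\rvert}{\lvert W\rvert}\Ind_{W_F}^W\chi_{[F,\hat1]}(x)=x^r-1. \]
Dividing by $x-1$, which is not a zero divisor, gives $1+x+\dots+x^{r-1}$. Adding the $F=\hat1$ term $-\mathds{1}$ yields $x+\dots+x^{r-1}$. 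This is exactly $\alpha^\G_\LL$ when $\lvert\max\G\rvert=1$, since then the sign is trivial and $g$ ranges over $0<g<r$.

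\emph{General case.} Write $\max\G=\{G_1,\dots,G_m\}$. The building set axiom gives $\LL\cong\prod_i[\hat0,G_i]$, and $W$ permutes the factors compatibly. A flat is a tuple $F=(F_i)$ with $F_i\le G_i$. Then $[F,\hat1]\cong\prod_i[F_i,G_i]$, and $\max\G_F=\{F\vee G_i : F_i<G_i\}$, which is in $W_F$-equivariant bijection with $I(F)=\{i:F_i<G_i\}$. Hence
\[ \overline{\chi}^{\G_F}_{[F,\hat1]}=-\,\sgn_{I(F)}\otimes\bigotimes_{i\in I(F)}\big(-\overline{\chi}_{[F_i,G_i]}\big). \]
Formally, each factor therefore contributes $1$ if $F_i=G_i$ and $-\sgn\otimes\overline{\chi}_{[F_i,G_i]}$ if $F_i<G_i$. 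Summed over $F_i\le G_i$, the irreducible computation gives $1-(1+\dots+x^{r_i-1})=-(x+\dots+x^{r_i-1})$, twisted by the sign. Multiplying over $i$ and including the global $-1$ gives $(-1)^{m-1}\sgn_{\max\G}\otimes\bigotimes_i(x+\dots+x^{r_i-1})$, and the tensor product expands as the sum over $g$, which is $\alpha^\G_\LL$.

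\emph{Main obstacle.} The formal product computation above must be made rigorous equivariantly, because $W$ permutes the factors. At the level of traces at $w$, I would decompose $\max\G$ into $\langle w\rangle$-cycles. A $w$-fixed tuple $F$ is determined, on each cycle $C$ of length $\ell$, by a single $F_{i}\le G_i$ for a base point $i\in C$ that is fixed by $w^\ell$. The trace of $w$ on a tensor product permuted cyclically over $C$ equals the trace of $w^\ell$ on one factor. Likewise, $\sgn$ of a cycle of length $\ell$ contributes $(-1)^{\ell-1}$ when $C\subseteq I(F)$. With this, both sides factor as products over cycles. On each cycle the identity reduces to the irreducible case applied to $[\hat0,G_i]$ with the group $\langle w^\ell\rangle$. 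The same cycle analysis applied to $\alpha^\G_\LL$, whose $g$ must be constant on cycles, gives the matching factor $(-1)^{\ell-1}\cdot\big(-\tr(w^\ell\mid x^\ell+\dots+x^{\ell(r_i-1)})\big)$. Note that $x$ must be replaced by $x^\ell$ consistently on both sides. The delicate point is tracking the signs $(-1)^{\ell-1}$ and the global $-1$ so that they agree.
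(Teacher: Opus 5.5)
Your proof is sound in design but takes a different route from the paper, and the one sign formula you commit to is wrong at exactly the point you flagged. The rest is correct: the reduction to the $(\hat 0,\hat 1)$ entry, the Frobenius trace formula, the identification of $\max\G_F$ with $I(F)$, the parametrization of $w$-fixed flats by $w^{\ell}$-fixed $F_i$, and the irreducible case.

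\textbf{Comparison with the paper.} The paper stays in $\Vrep(W)[x]$. It groups the flats by the set $S$ of elements of $\max\G$ lying below $F$. It sums each remaining factor over $F_i\in[\hat0,G_i)$ via \Cref{lemma:reduced_chi_is_well_defined} to get $\frac{x^{\rk G_i}-1}{x-1}$, and splits this as $1+\frac{x^{\rk G_i}-x}{x-1}$. It then regroups by the set $T\supseteq S$ of factors contributing a constant, and kills every $T\neq\emptyset$ with the Koszul identity \Cref{lemma:sum_of_signs_vanishes}.

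You instead pass to traces at a fixed $w$ and factor over the $\langle w\rangle$-cycles of $\max\G$. The sign character then becomes a number, and the term $F_i=G_i$ is absorbed directly into each cycle factor. The inclusion--exclusion is replaced by the scalar identity $1-\frac{x^{\ell r}-1}{x^{\ell}-1}=-(x^{\ell}+\dots+x^{\ell(r-1)})$. This makes explicit the equivariant content of the paper's step $\bigotimes_{G_i}\sum_{F_i}\overline{\chi}_{[F_i,G_i]}$, where $W_S$ permutes the factors. On traces that step is exactly your formula $\tr(w\mid\bigotimes_{C}V)=\tr(w^{\ell}\mid V)(x^{\ell})$ for the virtual graded character $V=\overline{\chi}$. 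The paper's route is shorter and reuses a lemma it needs anyway for \Cref{lemma:alpha_H_equal_zeta}.

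\textbf{The sign error.} Your stated per-cycle factor $(-1)^{\ell-1}\cdot(-T_C)$ is wrong for even $\ell$. The cyclic trace formula must be applied to $\overline{\chi}_{[F_i,G_i]}$, not to $-\overline{\chi}_{[F_i,G_i]}$. Indeed $(-V)^{\otimes\ell}=(-1)^{\ell}V^{\otimes\ell}$, whereas $\tr(w^{\ell}\mid -V)=-\tr(w^{\ell}\mid V)$.

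So write $(-1)^{\lvert I(F)\rvert-1}=-\prod_{C\subseteq I(F)}(-1)^{\ell_C}$. Combined with $\sgn_{I(F)}(w)=\prod_{C\subseteq I(F)}(-1)^{\ell_C-1}$, each cycle in $I(F)$ contributes exactly $-\tr(w^{\ell}\mid\overline{\chi}_{[F_i,G_i]})(x^{\ell})$, and each cycle outside $I(F)$ contributes $1$. The same splitting of $(-1)^{\lvert\max\G\rvert-1}$ applies to $\alpha^\G_\LL$. Hence
\[ \sum_{wF=F}\tr\bigl(w\mid\overline{\chi}^{\G_F}_{[F,\hat1]}\bigr)=-\prod_{C}\Bigl(1-\frac{x^{\ell_C r_C}-1}{x^{\ell_C}-1}\Bigr)=-\prod_{C}\bigl(-T_C\bigr)=\tr\bigl(w\mid\alpha^\G_\LL\bigr), \]
where $T_C=x^{\ell_C}+\dots+x^{\ell_C(r_C-1)}$.

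With your bookkeeping the two sides would disagree as soon as an even cycle occurs. Already for two factors swapped by $w$, the left side would give $1+(1+T_C)$ for that cycle, while $\alpha^\G_\LL$ would give $T_C$. With the per-cycle factor corrected to $-T_C$ on both sides, your proof is complete.
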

\begin{proof}
    It is enough to prove
    \[ \sum_{F \in \LL} \frac{\lvert W_F \rvert }{\lvert W \rvert} \Ind_{W_F}^W (-1)^{\lvert \max \G_F \rvert -1} \sgn_{\max \G_F} \otimes \bigotimes_{G \in \max \G_F} \overline{\chi}_{[F,G]} = \alpha_{\LL}^\G(x)\]
    for all built lattices $(\LL,\G)$.
    We put together the summands with the same set $S:= f(F) \cap \max \G$
    \[ \sum_{S \subseteq \max \G} \frac{\lvert W_S \rvert }{\lvert W \rvert} \Ind_{W_S}^W  (-1)^{\lvert \max \G \setminus S \rvert -1} \sgn_{\max \G \setminus S} \otimes \sum_{\substack{F \in \LL \\ f(F) \cap \max \G =S}} \frac{\lvert W_F \rvert }{\lvert W_S \rvert} \Ind_{W_F}^{W_S}  \bigotimes_{G \in \max \G \setminus S} \overline{\chi}_{[F,G]} \]
    where we used the equality $\max \G_F= (\max \G) \setminus S$.
    Moreover, every such $F$ is uniquely determined by the elements $F_i:=F \wedge G_i \in [\hat{0},G_i)$ for all $G_i \in \max \G \setminus S$.
    Hence, the left hand side is equal to
    \begin{align*}
        &\sum_{S \subseteq \max \G} \frac{\lvert W_S \rvert }{\lvert W \rvert} \Ind_{W_S}^W  (-1)^{\lvert \max \G \setminus S \rvert -1} \sgn_{\max \G \setminus S} \otimes 
        \bigotimes_{G_i \in \max \G \setminus S} \sum_{F_i \in [\hat{0},G_i)} \overline{\chi}_{[F_i,G_i]} \\
        &= \sum_{S \subseteq \max \G} \frac{\lvert W_S \rvert }{\lvert W \rvert} \Ind_{W_S}^W  (-1)^{\lvert \max \G \setminus S \rvert -1} \sgn_{\max \G \setminus S} \otimes 
        \bigotimes_{G_i \in \max \G \setminus S} \frac{x^{\rk (G_i)}-1}{x-1} \mathds{1}_{W_{G_i}} \\
        &= \sum_{T \subseteq \max \G} \frac{\lvert W_T \rvert }{\lvert W \rvert} \Ind_{W_T}^W  \left( \sum_{S \subseteq T} \frac{\lvert W_{ST} \rvert }{\lvert W_T \rvert} \Ind_{W_{ST}}^{W_T}  (-1)^{\lvert \max \G \setminus S \rvert -1} \sgn_{\max \G \setminus S} \right) \otimes 
        \, \bigotimes_{\mathclap{G_i \in \max \G \setminus T}} \,\, \frac{x^{\rk (G_i)}-x}{x-1} \mathds{1}_{W_{G_i}} \\
        &= (-1)^{\lvert \max \G \rvert -1 } \sgn_{\max \G} \otimes \bigotimes_{G \in \max \G } \frac{x^{\rk (G)}-x}{x-1} \mathds{1}_{W_{G}}
    \end{align*}
    where we used \Cref{lemma:reduced_chi_is_well_defined} and $T$ is the union of $S$ with the degree zero part of the expansion of 
    $\otimes_{G_i \in \max \G \setminus S} (1+\frac{x^{\rk (G_i)}-x}{x-1} ) \mathds{1}_{W_{G_i}} $.
    By \Cref{lemma:sum_of_signs_vanishes}, the only non-zero summand corresponds to $T=\emptyset$ explaining the last equality.
\end{proof}

\begin{theorem}
\label{theorem_inversion_Chow}    We have
    \[H_\LL^\G = - (\overline{\chi}^\G_\LL)^{-1}. \]
\end{theorem}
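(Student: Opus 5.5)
Combining \Cref{lemma:alpha_H_equal_zeta} and \Cref{lemma:zeta_chi_equal_alpha}, and using associativity of the convolution product on $I^W(\LL)$ (which holds because it is induced by a monoidal structure on $\mathcal{C}^W(\LL)$, \cite{EquivariantIncidenceAlgebras}*{Proposition 2.3}), we get
\[ \zeta_\LL * \overline{\chi}^\G_\LL * H^\G_\LL = \alpha^\G_\LL * H^\G_\LL = -\zeta_\LL. \]
The strategy is to cancel $\zeta_\LL$ on the left, obtaining $\overline{\chi}^\G_\LL * H^\G_\LL = -\delta$, where $\delta$ is the unit of $I^W(\LL)$ (namely $\delta_{FF}=\mathds{1}_{W_F}$ and $\delta_{FG}=0$ for $F<G$). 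This says that $H^\G_\LL$ is a right inverse of $-\overline{\chi}^\G_\LL$.

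To cancel, we need $\zeta_\LL$ to be invertible in $I^W(\LL)$. Since $\LL$ is finite, an element $f$ of the incidence algebra whose diagonal entries $f_{FF}$ are invertible in $\Vrep(W_F)$ is invertible. One can solve $f*g=\delta$ recursively on the length of $[F,G]$. The diagonal term of $(f*g)_{FG}$ is $f_{FF}\otimes g_{FG}$ plus $f_{FG}\otimes g_{GG}$ restricted to $W_{FG}$, and all remaining terms involve intervals strictly shorter than $[F,G]$. Because $\zeta_{FF}=\mathds{1}_{W_F}$, the element $\zeta_\LL$ is invertible; it is the equivariant M\"obius function. Left-multiplying by $\zeta_\LL^{-1}$ then gives $\overline{\chi}^\G_\LL*H^\G_\LL=-\delta$.

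It remains to upgrade the one-sided inverse to a two-sided one. The same triangularity argument applies to $\overline{\chi}^\G_\LL$, whose diagonal entries are $-\mathds{1}_{W_F}$. Hence $\overline{\chi}^\G_\LL$ is invertible in the associative ring $I^W(\LL)$, so its right inverse is its unique two-sided inverse. This yields $H^\G_\LL=-(\overline{\chi}^\G_\LL)^{-1}$.

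The only step requiring care is the invertibility criterion in the equivariant setting. In the recursion, each unknown $g_{FG}$ must be a genuine $W_{FG}$-representation class, and the choice must be compatible with the $W$-action, i.e.\ $g_{\sigma F\sigma G}$ must be the conjugate of $g_{FG}$. I would handle this by solving for one representative of each $W$-orbit of pairs $F\le G$ and then transporting the solution by conjugation. Alternatively, one can avoid constructing inverses entirely by noting that the $\zeta$ of the subalgebra of diagonal-type elements is unipotent: $\zeta-\delta$ is nilpotent, since $(\zeta-\delta)^{*(\rk\LL+1)}=0$ because it is supported on chains of strict inequalities. Then $\zeta^{-1}=\sum_k(\delta-\zeta)^{*k}$ is an explicit finite sum, and the analogous formula works for $-\overline{\chi}^\G_\LL$.
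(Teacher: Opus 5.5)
Your proposal is correct and follows the same route as the paper, which combines \Cref{lemma:alpha_H_equal_zeta} and \Cref{lemma:zeta_chi_equal_alpha} with the invertibility of $\zeta_\LL$ to get $\overline{\chi}^\G_\LL * H^\G_\LL = -\delta$. Your extra steps fill in what the paper leaves implicit: invertibility of $\zeta_\LL$ (and of $\overline{\chi}^\G_\LL$) via triangularity or nilpotence of the off-diagonal part, and the upgrade from a right inverse to a two-sided inverse.
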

\begin{proof}
    It follows from \Cref{lemma:alpha_H_equal_zeta,lemma:zeta_chi_equal_alpha} and the fact that $\zeta_\LL$ is invertible.
\end{proof}

\section{Weyl groups of type \texorpdfstring{$B$}{B}}
\label{sec:WeylGroups}

For any $n$, we denote by $B_n$ the hyperoctahedral group $B_n=\Z_2^n \rtimes S_n= \Z_2 \wr S_n$, i.e. Weyl group of the root system of type $B$.
We recall here certain general facts about the representation theory and the conjugacy classes in the group $B_n$.
For more details, we refer to \cites{Kerber2,Kerber1}.

\subsection{Representations of \texorpdfstring{$B_n$}{Bn}}

We recall the following standard result in finite group theory.
\begin{theorem}[Mackey Subgroup Theorem]
    Let $H,K$ be two subgroups of a finite group $G$ and choose a set of representative $S \subset G$ for the double coset $K \backslash G/H$.
    For any representation $\rho$ of $H$ and any $s \in S$ let $\rho^s \colon sHs^{-1} \to V$ defined by $\rho^s(shs^{-1})= \rho(h)$. Then
    \[ \Res^G_K \Ind_H^G \rho \cong \bigoplus_{s \in S} \Ind_{K \cap sHs^{-1}}^K \Res^{sHs^{-1}}_{K \cap sHs^{-1}} \rho^s. \]
\end{theorem}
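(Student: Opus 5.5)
We will prove the Mackey Subgroup Theorem directly, by decomposing the induced module according to double cosets. Realize $\Ind_H^G \rho$ as $\C[G]\otimes_{\C[H]} V$. As a vector space this is $\bigoplus_{gH \in G/H} g\otimes V$. Since $G=\bigsqcup_{s\in S} KsH$, grouping the left cosets $gH$ by the double coset containing them gives a decomposition
\[ \Ind_H^G\rho = \bigoplus_{s\in S} W_s, \qquad W_s := \bigoplus_{gH \subseteq KsH} g\otimes V . \]
Each $W_s$ is stable under left multiplication by $K$, because $K$ permutes the left cosets contained in $KsH$. Hence this is a decomposition of $\Res^G_K\Ind_H^G\rho$ into $K$-subrepresentations, and it suffices to identify each $W_s$.

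Fix $s$ and set $K_s := K\cap sHs^{-1}$. The group $K$ acts transitively on the set of left cosets $gH$ contained in $KsH$. The stabilizer of $sH$ consists of those $k\in K$ with $ksH=sH$, that is $s^{-1}ks\in H$, so it equals $K_s$. The subspace $s\otimes V$ is then stable under $K_s$. For $k=shs^{-1}\in K_s$ we compute $k\cdot(s\otimes v)=sh\otimes v=s\otimes \rho(h)v$, so $K_s$ acts on $s\otimes V$ exactly through $\rho^s$, restricted to $K_s$. Thus $s\otimes V \cong \Res^{sHs^{-1}}_{K_s}\rho^s$ as a $K_s$-representation.

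To conclude, we use the standard characterization of induced representations. If a $K$-representation $W$ decomposes as a direct sum $\bigoplus_{i} k_i U$, where $U$ is a $K_s$-stable subspace and the $k_i$ run over a set of representatives of $K/K_s$, then $W\cong \Ind_{K_s}^K U$. Here the representatives are $k_i$ with $k_isH$ running over the left cosets in $KsH$, and $k_i(s\otimes V)=k_is\otimes V$, so $W_s$ has exactly this form. Therefore $W_s\cong \Ind_{K_s}^K\Res^{sHs^{-1}}_{K_s}\rho^s$, and summing over $s\in S$ gives the theorem.

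The only delicate step is bookkeeping rather than a genuine obstacle. One must check that the map $K/K_s \to \{gH\subseteq KsH\}$, $kK_s\mapsto ksH$, is a well-defined bijection; this is just orbit–stabilizer. One must also confirm that the action of $K_s$ on $s\otimes V$ is twisted by conjugation by $s$ in the correct direction, matching the paper's convention $\rho^s(shs^{-1})=\rho(h)$. Finally, the result is independent of the choice of representatives $S$, since replacing $s$ by $ksh$ yields conjugate data and isomorphic induced representations.
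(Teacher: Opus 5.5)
Your proof is correct and complete. The paper gives no proof of this statement: it quotes the Mackey Subgroup Theorem as a standard fact of finite group theory and only uses it as a tool, in the classification of irreducible representations of semidirect products and in the computation of $\Ind_{\Z_2\times S_n}^{B_n}$. So there is no competing argument to compare against. What you wrote is the usual textbook proof (as in Serre's book on linear representations of finite groups).

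All the key steps are right:
\begin{itemize}
\item the splitting of $\C[G]\otimes_{\C[H]}V$ into $K$-stable pieces indexed by the double cosets $KsH$;
\item the identification of the stabilizer of $sH$ in $K$ with $K\cap sHs^{-1}$;
\item the computation $shs^{-1}\cdot(s\otimes v)=s\otimes\rho(h)v$, which matches the paper's convention $\rho^s(shs^{-1})=\rho(h)$;
\item the recognition criterion for induced modules, justified by the surjective $K$-map $\C[K]\otimes_{\C[K_s]}(s\otimes V)\to W_s$ and a dimension count.
\end{itemize}

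Your closing remark on independence of the choice of $S$ is harmless but unnecessary, since $S$ is fixed in the statement.
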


Let now $K,H$ be two finite group with an action $\varphi \colon H \to \Aut(K)$. Denote by $G= K \rtimes_\varphi H $ be the semidirect product.
Let $K^\vee = \Hom(K,\C^*)$ be the character group of $K$ and notice that $H$ acts on $K^\vee$.
For each character $\chi \in K^\vee$ let $H_\chi < H$ be its stabilizer.

Each representation $\chi$ of $K$ extends naturally to a representation of the group $K \rtimes H_\chi$ where the action of $H_\chi$ is trivial.
Moreover, the representations $\rho$ of $H_\chi$ lift to representations of $K \rtimes_\varphi H_\chi$ containing $K$ in the kernel.
We denote by $\chi \otimes \rho$ the tensor product of this two extensions. From Mackey's theorem, we deduce the following.

\begin{theorem}
    The irreducible representations of $G$ over an algebraically closed field are exactly the representations
    \[ \Ind_{K \rtimes_\varphi H_\chi}^G \chi \otimes \rho \]
    where $\chi$ runs over a set of representatives of $H$-conjugate irreducible representations of $K$ and $\rho$ run over the irreducible representations of $H_\chi$.
\end{theorem}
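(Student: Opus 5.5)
The plan is the standard little-group (Wigner--Mackey) argument for semidirect products with abelian normal subgroup; here $K$ is abelian, as in the application $K=\Z_2^n$. Write $G_\chi := K \rtimes_\varphi H_\chi$ and $V_{\chi,\rho} := \Ind_{G_\chi}^G (\chi\otimes\rho)$. The argument has three steps: show each $V_{\chi,\rho}$ is irreducible, show distinct pairs give non-isomorphic representations, and show these exhaust all irreducibles by a dimension count.

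\emph{Irreducibility.} I will use Mackey's criterion, derived from the Mackey Subgroup Theorem with $K=H=G_\chi$ (in that theorem's notation) together with Frobenius reciprocity. This gives
\[\langle V_{\chi,\rho},V_{\chi,\rho}\rangle_G=\sum_{s} \big\langle \Res_{G_\chi\cap sG_\chi s^{-1}} (\chi\otimes\rho),\ \Res_{G_\chi\cap sG_\chi s^{-1}}(\chi\otimes\rho)^s \big\rangle ,\]
with $s$ running over double cosets $G_\chi\backslash G/G_\chi$, which may be taken in $H$. For $s\notin H_\chi$, restrict further to $K\subseteq G_\chi\cap sG_\chi s^{-1}$. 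There $\chi\otimes\rho$ restricts to $(\dim\rho)\,\chi$, while its conjugate restricts to $(\dim\rho)\,s\cdot\chi$. Since $s\cdot\chi\neq\chi$, these share no common constituent, so the term is zero. The only surviving term is $s=1$, which contributes $\langle\chi\otimes\rho,\chi\otimes\rho\rangle_{G_\chi}=\langle\rho,\rho\rangle_{H_\chi}=1$, because $\chi$ is one-dimensional and $\rho$ is irreducible.

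\emph{Non-isomorphism.} The same computation, applied to $V_{\chi,\rho}$ and $V_{\chi',\rho'}$, gives the result. Alternatively, restrict to $K$: by Mackey, $\Res_K V_{\chi,\rho}$ is $\dim\rho$ times the sum over the $H$-orbit of $\chi$. So if $\chi,\chi'$ lie in different orbits the representations differ. If $\chi=\chi'$, the intertwiner computation above reduces to $\langle\rho,\rho'\rangle_{H_\chi}$, which is zero unless $\rho\cong\rho'$.

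\emph{Exhaustion.} I will count dimensions:
\[\sum_{[\chi]}\sum_{\rho}\big([H:H_\chi]\dim\rho\big)^2=\sum_{[\chi]}[H:H_\chi]^2|H_\chi| =|H|\sum_{[\chi]}[H:H_\chi]=|H|\,|K^\vee|=|H||K|=|G|,\]
using $\sum_\rho(\dim\rho)^2=|H_\chi|$, the orbit--stabilizer theorem, and $|K^\vee|=|K|$ for $K$ abelian over an algebraically closed field. This equals $\sum(\dim)^2$ over all irreducibles of $G$, so the list is complete.

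The main obstacle I expect is the bookkeeping in the Mackey step. One must check that the double coset representatives can be chosen in $H$, and that $G_\chi\cap sG_\chi s^{-1}\supseteq K$, which holds because $K$ is normal. A further technical point is that $\chi\otimes\rho$ is a genuine representation of $G_\chi$: the extension of $\chi$ by the trivial $H_\chi$-action is a homomorphism precisely because $H_\chi$ fixes $\chi$, and this needs to be verified at the outset. If $K$ were nonabelian the statement would need projective representations, so I will state the abelian hypothesis explicitly, as is implicit in the use of $K^\vee$.
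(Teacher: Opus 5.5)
Your proof is correct. The irreducibility and non-isomorphism steps follow the paper's proof. The paper also computes $\langle \Ind_{K\rtimes H_\chi}^G \chi\otimes\rho, \Ind_{K\rtimes H_{\chi'}}^G \chi'\otimes\rho'\rangle$ by Frobenius reciprocity and the Mackey subgroup theorem, with double coset representatives in $H$. The terms with $\chi\neq(\chi')^s$ vanish on $K$, leaving $\langle\rho,\rho'\rangle_{H_\chi}$.

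You diverge at exhaustion. The paper argues directly. Given an irreducible $V$ of $G$, it picks a character $\chi$ occurring in $\Res^G_K V$ and notes that $H_\chi$ preserves the $\chi$-isotypic component. That component therefore contains some $\chi\otimes\rho$ as a $K\rtimes H_\chi$-subrepresentation. Frobenius reciprocity then gives a nonzero map $\Ind_{K\rtimes H_\chi}^G \chi\otimes\rho\to V$, which is an isomorphism because both sides are irreducible.

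Your dimension count $\sum([H:H_\chi]\dim\rho)^2=|H|\,|K^\vee|=|G|$ is quicker, but it is non-constructive. It also relies on two global identities: $\sum_V(\dim V)^2=|G|$, and $|K^\vee|=|K|$. These need $K$ abelian and the characteristic prime to $|G|$, which holds over $\C$ as in the paper. The paper's route needs neither count. It also tells you which pair $(\chi,\rho)$ a given irreducible $V$ corresponds to: the orbit of $\chi$ is read off from $\Res_K V$, and $\rho$ from the $H_\chi$-action on the $\chi$-isotypic part.

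Your remark that $K$ must be abelian (so that $\chi$ is one-dimensional and its trivial extension to $K\rtimes H_\chi$ is a homomorphism) is well taken. The paper assumes this only implicitly, through $K^\vee=\Hom(K,\C^*)$.
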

\begin{proof}
    Let us compute 
    \[ \langle \Ind_{K \rtimes H_\chi}^G \chi \otimes \rho , \Ind_{K \rtimes H_{\chi'}}^G \chi' \otimes \rho' \rangle = \langle  \chi \otimes \rho , \Res_{K \rtimes H_\chi}^G \Ind_{K \rtimes H_{\chi'}}^G \chi' \otimes \rho' \rangle \]
    Using the Mackey subgroup theorem we have 
    \[\Res_{K \rtimes H_\chi}^G \Ind_{K \rtimes H_{\chi'}}^G \chi' \otimes \rho' \cong \bigoplus _{s \in S} \Ind_{K \rtimes ( H_\chi \cap sH_{\chi'}s^{-1})}^{K \rtimes  H_\chi } 
    \Res^{K \rtimes sH_{\chi'}s^{-1}}_{K \rtimes ( H_\chi \cap sH_{\chi'}s^{-1})} (\chi' \otimes \rho')^s, \]
    where $S$ is a set of representative of $H_{\chi'}\backslash H /H_\chi$ and the subgroup $K$ is fixed by $s$. Therefore,
    \begin{align*}
        \langle \Ind_{K \rtimes H_\chi}^G \chi \otimes \rho , \Ind_{K \rtimes H_{\chi'}}^G \chi' \otimes \rho' \rangle &= \sum_{s\in S} \langle \Res_{K \rtimes ( H_\chi \cap sH_{\chi'}s^{-1})}^{K \rtimes  H_\chi }  \chi \otimes \rho , \Res^{K \rtimes sH_{\chi'}s^{-1}}_{K \rtimes ( H_\chi \cap sH_{\chi'}s^{-1})} (\chi' \otimes \rho')^s \rangle  \\
        &= \sum_{s\in S} \langle \chi, (\chi')^s \rangle \langle \Res_{H_\chi \cap sH_{\chi'}s^{-1}}^{H_\chi } \rho , \Res^{sH_{\chi'}s^{-1}}_{H_\chi \cap sH_{\chi'}s^{-1}} (\rho')^s \rangle
    \end{align*}
    that is zero if $\chi$ and $\chi'$ are not $H$-conjugate.
    Otherwise, $\chi = (\chi')^s$ and $\Ind_{K \rtimes H_{\chi'}}^G \chi' \otimes \rho' = \Ind_{K \rtimes H_{\chi}}^G \chi \otimes (\rho')^s$ so we can assume $\chi'=\chi$
    We finally have
    \[\langle \Ind_{K \rtimes H_\chi}^G \chi \otimes \rho , \Ind_{K \rtimes H_{\chi}}^G \chi \otimes \rho' \rangle = \langle \chi, \chi \rangle \langle \rho, \rho' \rangle.\]
    In particular the representations $ \Ind_{K \rtimes_\varphi H_\chi}^G \chi \otimes \rho $ are irreducible and pairwise non-isomorphic.

    Moreover, let $V$ be any irreducible representation of $G$ then $\Res^G_K V$ contains an irreducible representation $\chi$ and $H_\chi$ act on the $\chi$-isotypical component of $V$.
    This action contains an irreducible representation $\rho$ of $H_\chi$ and its easy to check that $\Res^G_{K\rtimes H_\chi} V$ contains the representation $\chi \otimes \rho$.
    In particular, $V$ contains the subrepresentation $\Ind_{K \rtimes H_\chi}^G \chi \otimes \rho $.
\end{proof}

In the case of $B_n$, i.e. when $K=\Z^n_2$ and $H=S_n$, a set of representation of $S_n$-orbit of irreducible $\Z_2^n$-representations is given by the characters $1^k \boxtimes (-1)^{n-k}$ for $k=0,\dots, n$, where $1$ and $-1$ are the trivial and sign representation of $\Z_2$

For $\pi \vdash n$ define the $B_n$-representations $V_\pi= 1^n \otimes \Specht_\pi$ and $\tilde{V}_\pi= (-1)^n \otimes \Specht_\pi$.

\begin{lemma}
\label{representationsbn}
    The irreducible representations of $B_n$ are parametrized by pairs of partitions $(\pi, \lambda)$ such that $\lvert \pi \rvert + \lvert \lambda \rvert = n$:
    \[ V_{\pi,\lambda}= \Ind_{B_{k}\times B_{n-k}}^{B_n} V_\pi \boxtimes \tilde{V}_\lambda.\]
\end{lemma}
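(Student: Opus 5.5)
The plan is to apply the preceding theorem (the Clifford/Mackey description of irreducibles of $K \rtimes_\varphi H$) with $K=\Z_2^n$ and $H=S_n$. First we describe the characters of $K$. Every irreducible representation of $\Z_2^n$ is a character $\epsilon_1\boxtimes\cdots\boxtimes\epsilon_n$ with $\epsilon_i\in\{1,-1\}$. The group $S_n$ acts on these characters by permuting the factors, so the orbits are indexed by the number $k$ of trivial factors. As representatives we take $\chi_k=1^k\boxtimes(-1)^{n-k}$ for $k=0,\dots,n$. The stabilizer of $\chi_k$ in $S_n$ is the Young subgroup $S_k\times S_{n-k}$, because a permutation fixes $\chi_k$ exactly when it preserves the set of trivial coordinates.

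Next we identify the irreducibles of each stabilizer. The irreducible representations of $S_k\times S_{n-k}$ are the external products $\Specht_\pi\boxtimes\Specht_\lambda$ with $\pi\vdash k$ and $\lambda\vdash n-k$. The preceding theorem then says that the irreducibles of $B_n$ are exactly the representations
\[ \Ind_{\Z_2^n\rtimes(S_k\times S_{n-k})}^{B_n} \chi_k\otimes(\Specht_\pi\boxtimes\Specht_\lambda), \]
and that they are pairwise non-isomorphic as $(k,\pi,\lambda)$ varies. Since $k=\lvert\pi\rvert$ and $n-k=\lvert\lambda\rvert$, the triple $(k,\pi,\lambda)$ carries the same data as the pair $(\pi,\lambda)$ with $\lvert\pi\rvert+\lvert\lambda\rvert=n$.

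The remaining step, which is the only one needing care, is to match this inducing representation with $V_\pi\boxtimes\tilde V_\lambda$. The group $\Z_2^n\rtimes(S_k\times S_{n-k})$ splits canonically as $(\Z_2^k\rtimes S_k)\times(\Z_2^{n-k}\rtimes S_{n-k})=B_k\times B_{n-k}$. Under this splitting, $\chi_k$ extended trivially on $S_k\times S_{n-k}$ becomes $1^k\boxtimes(-1)^{n-k}$, where $1^k$ is viewed as a $B_k$-representation and $(-1)^{n-k}$ as a $B_{n-k}$-representation. Likewise, $\Specht_\pi\boxtimes\Specht_\lambda$ lifted with $\Z_2^n$ in the kernel is the external product of the two lifts. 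The tensor product of external products is the external product of the tensor products, so the inducing representation is $(1^k\otimes\Specht_\pi)\boxtimes((-1)^{n-k}\otimes\Specht_\lambda)=V_\pi\boxtimes\tilde V_\lambda$. This gives $V_{\pi,\lambda}$ as stated.

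The claim that every irreducible of $B_n$ occurs in this list is part of the preceding theorem, so no further argument is needed. As an optional consistency check, one can count: the number of pairs $(\pi,\lambda)$ equals the number of conjugacy classes of $B_n$, which are given by signed cycle types.
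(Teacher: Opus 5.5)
Your proposal is correct and follows the same route as the paper. You apply the preceding Clifford--Mackey theorem with $K=\Z_2^n$ and $H=S_n$, take orbit representatives $1^k\boxtimes(-1)^{n-k}$ with stabilizer $S_k\times S_{n-k}$, and use the irreducibles $\Specht_\pi\boxtimes\Specht_\lambda$ of that stabilizer; you also spell out the identification $\Z_2^n\rtimes(S_k\times S_{n-k})\cong B_k\times B_{n-k}$ under which the inducing representation becomes $V_\pi\boxtimes\tilde{V}_\lambda$, a step the paper leaves implicit.
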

\begin{proof}
    This is a particular case of the previous theorem. The stabilizer in $S_n$ of the $\Z_2^n$-representation $1^k \boxtimes (-1)^{n-k}$ is $S_k \times S_{n-k}$ and its irreducible representations are exactly tensor product of two Specht modules.
\end{proof}

Let $\mathcal{P}^{(2)}$ be the set of pairs of partitions. Its element are often called \textit{bipartitions}. Given $\lambda=(\lambda^{(0)},\lambda^{(1)}) \in \mathcal{P}^{(2)}$, we put $|\lambda|=|\lambda^{(0)}|+|\lambda^{(1)}|$ its \textit{size} and we let $\mathcal{P}^{(2)}_n$ be the bipartitions of size $n$. Notice that Lemma \ref{representationsbn} shows that representations of $B_n$ are naturally indexed by elements of $\mathcal{P}^{(2)}_n$.

\subsection{Conjugacy classes of \texorpdfstring{$B_n$}{Bn}}

Let $(\epsilon,\sigma) \in B_n$ with $\epsilon_1,\dots,\epsilon_n \in \Z_2$ and $\sigma \in S_n$. Write $\sigma$ as a product of cycles $\sigma_1 \cdots \sigma_l$ and suppose that for each $j=1,\dots,l$, the cycle $\sigma_j$ is $(i_{j,1} \cdots i_{j,s_j})$. We say that $\sigma_j$ is \textit{positive} if $\epsilon_{i_{j,1}} \cdots \epsilon_{i_{j,s_j}}=1$ and \textit{negative} if $\epsilon_{i_{j,1}} \cdots \epsilon_{i_{j,s_j}}=-1$. We denote by $\sigma^+$ the product of the positive cycles of $\sigma$ and $\sigma^-$ the product of the negative cycles of $\sigma.$ Notice that $\sigma^+\sigma^-=\sigma$ and that the conjugacy class of $\sigma^+,\sigma^-$ are invariant under conjugation in $B_n$.

To the conjugacy class of $\sigma$ we can thus associate a pair of partition $(\lambda^{(0)},\lambda^{(1)})$ such that $|\lambda^{(0)}|+|\lambda^{(1)}|=n$, where $\lambda^{(0)}$ is associated to the conjugacy class of $\sigma^+$ and $\lambda^{(1)}$ is associated to the conjugacy class of $\sigma^-$. The correspondence just introduced above gives an explicit bijection between the conjugacy classes of $B_n$ and $\mathcal{P}^{(2)}_n$.  The cardinality of the centralizer $z_{(\lambda^{(0)},\lambda^{(1)})}$ of the conjugacy class associated to $(\lambda^{(0)},\lambda^{(1)})$ is $$z_{(\lambda^{(0)},\lambda^{(1)})}=2^{l(\lambda^{(0)})}2^{l(\lambda^{(1)})}z_{\lambda^{(0)}}z_{\lambda^{(1)}} .$$

\section{Frobenius characteristic map for \texorpdfstring{$B_n$}{Bn} and bisymmetric functions}
\label{section-frobeniuscharacteristic}
Let $\Lambda(X)$ be the ring of symmetric functions in one infinite set of variables over $\Q$. The Frobenius characteristic map for $S_n$ is 
\[\ch (\rho)= \frac{1}{n!} \sum_{\sigma \in S_n} \tr(\rho(\sigma)) p_\sigma (X) \in \Lambda(X)\]
for any $S_n$-representation $\rho$. It can be proven that $\ch(\Specht_\pi)= s_\pi(X)$. Recall moreover that, for any representations $\chi,\rho$ of $S_k,S_{n-k}$ respectively, we have:
\begin{equation}
    \ch (\Ind_{S_k \times S_{n-k}}^{S_n} \rho \boxtimes \chi )= \ch(\rho) \ch(\chi)
\end{equation}

 The definition of the Frobenius characteristic map can be extended by linearity to any virtual $S_n$-representation and similarly, to any class function $f\colon S_n \to \Q$ as \[\ch (f)= \frac{1}{n!} \sum_{\sigma \in S_n} f(\sigma) p_\sigma (X) \in \Lambda(X)\] For a representation $\rho$, if $\chi_{\rho}$ is its character, we have $\ch(\rho)=\ch(\chi_{\rho})$ and in what follows we will use both notations indistinctly. For more details, see for instance the book of Macdonald \cite{Macdonald}. In what follows, we recall how to construct a similar characteristic map for representations of $B_n$. For more details about this construction, see for instance \cite{Macdonald}*{Appendix B}

Let $\Lambda^{(2)} \coloneqq \Lambda(X) \otimes \Lambda(Y)$ be the ring of symmetric functions in two set of variables $X,Y$ over $\Q$, also called bisymmetric functions.  Basis of $\Lambda^{(2)}$ are naturally indexed by elements of $\mathcal{P}^{(2)}$. In particular, for every $(\lambda,\mu) \in \mathcal{P}^{(2)}$, we put $$s_{(\lambda,\mu)}=s_{\lambda}(X) \otimes s_{\mu}(Y) .$$ The definition of the analogue of the complete symmetric functions for bipartitions is slightly more involved. We denote by $X+Y$ the alphabet in variables $x_i, y_i$. We have thus $p_d[X+Y]=p_d[X]+p_d[Y]$. We write similarly, $p_d[X-Y]=p_d[X]-p_d[Y]$. Similarly, for any $\lambda \in \mathcal{P}$, we write $p_\lambda[X+Y]=\prod_i p_{\lambda_i}[X+Y]$ and $p_\lambda[X-Y]=\prod_i p_{\lambda_i}[X-Y]$. For more about this notation, see next section \ref{sectionplethysm} about \textit{plethysm}.

For any bipartition $(\lambda,\mu) \in \mathcal{P}^{(2)}$, we put then $$p_{(\lambda,\mu)}=p_\lambda[X+Y]p_\mu[X-Y] .$$
Notice, for instance, that, for any $n \in \N$, we have $p_{((n),\emptyset)}=p_n[X+Y]$ and $p_{(\emptyset,(n))}=p_n[X-Y]$. We use \(p_{(\lambda,\mu)}\) when referring to the basis indexed by bipartitions, while \(p_\lambda[X+Y]p_\mu[X-Y]\) is used when the plethystic structure is relevant.

Let now $\chi$ be a representation of $B_n$. 

\begin{definition}
    The \textit{Frobenius characteristic map} in type $B_n$ is 
    \[ \ch(\chi) = \frac{1}{2^n n!}\sum_{\sigma \in B_n} \tr(\chi(\sigma)) p_{\sigma^+}[X+Y]p_{\sigma^-}[X-Y]\]
    where $\sigma^+$ is the partition of associated with positive cycle of $\sigma$ and $\sigma^-$ associated to negative cycles.
\end{definition}

We have a similar extension to any class function $f\colon B_n \to \Q$. The Frobenius characteristic respects the following properties.

\begin{lemma}
    We have:
    \begin{enumerate}
        \item $\ch (\Ind_{B_k \times B_{n-k}}^{B_n} \rho \boxtimes \chi )= \ch(\rho) \ch(\chi)$,
        \item $\ch(V_{\pi,\lambda})=s_\pi(X)s_\lambda(Y)$,
        \item $\ch (\Ind_{S_n}^{B_n} \chi) = \ch(\chi)[X+Y]$.
    \end{enumerate}
\end{lemma}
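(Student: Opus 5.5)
All three properties will be checked at the level of characters. The map $\ch$ is linear in the class function, and the power-sum products $p_{(\lambda,\mu)}$, indexed by bipartitions, form a basis of $\Lambda^{(2)}$. Throughout I will use the standard fact that, for $H\le G$ and $\rho$ a class function on $H$,
\[ \frac{1}{|G|}\sum_{g\in G}\Ind_H^G\rho(g)\,F(g)=\frac{1}{|H|}\sum_{h\in H}\rho(h)F(h) \]
for every class function $F$ on $G$, in particular for $F(g)=p_{g^+}[X+Y]p_{g^-}[X-Y]$. This is Frobenius reciprocity written for the "test function" $F$.

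For (1), take $H=B_k\times B_{n-k}$ inside $B_n$. An element $(\sigma,\tau)\in H$ has, as an element of $B_n$, positive cycles equal to the union of the positive cycles of $\sigma$ and of $\tau$, and similarly for the negative cycles. This holds because the sign of a cycle only involves the coordinates it moves. Hence $F(\sigma,\tau)=F(\sigma)F(\tau)$. The average over $H$ then factors as the product of the averages over $B_k$ and $B_{n-k}$, which is $\ch(\rho)\ch(\chi)$.

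For (3), take $H=S_n$ embedded with all signs $\epsilon_i=1$. Every cycle of such an element is positive, so $F(\sigma)=p_\sigma[X+Y]$. The averaging identity gives $\frac{1}{n!}\sum_\sigma \chi(\sigma)p_\sigma[X+Y]$, which is $\ch(\chi)$ with $X$ replaced by $X+Y$. This uses only that $p_d\mapsto p_d[X+Y]$ defines the plethystic substitution.

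For (2), property (1) together with \Cref{representationsbn} reduces the claim to showing $\ch(V_\pi)=s_\pi(X)$ and $\ch(\tilde V_\lambda)=s_\lambda(Y)$ for $\pi,\lambda\vdash n$. Consider a pair $(\epsilon,\sigma)$.
\begin{itemize}
\item The character of $V_\pi$ is $\chi_\pi(\sigma)$.
\item The character of $\tilde V_\lambda$ is $\prod_i\epsilon_i\,\chi_\lambda(\sigma)$, which equals $(-1)^{\#\text{negative cycles}}\chi_\lambda(\sigma)$.
\end{itemize}
For fixed $\sigma$ of cycle type $\mu=(\mu_1,\dots,\mu_l)$, summing over the $2^n$ choices of $\epsilon$ makes each cycle independently positive or negative. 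Each case occurs $2^{n-l}$ times. The inner sum is therefore
\[ 2^{n-l}\prod_j\bigl(p_{\mu_j}[X+Y]\pm p_{\mu_j}[X-Y]\bigr). \]
With the plus sign (for $V_\pi$) each factor is $2p_{\mu_j}(X)$. With the minus sign (for $\tilde V_\lambda$, the extra sign being absorbed) each factor is $2p_{\mu_j}(Y)$. Since the prefactor is $\frac{1}{2^n n!}$ and $2^{n-l}\cdot 2^l=2^n$, we obtain $\frac{1}{n!}\sum_\sigma\chi_\pi(\sigma)p_\sigma(X)=s_\pi(X)$, and likewise $s_\lambda(Y)$.

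The main obstacle is the sign bookkeeping in (2). One must check that $\prod_i\epsilon_i$ equals the product of the cycle signs. One must also confirm that the conventions $p_{(\lambda,\mu)}=p_\lambda[X+Y]p_\mu[X-Y]$ and the factor $2^n$ match; the centralizer formula $z_{(\lambda^{(0)},\lambda^{(1)})}$ confirms the normalization. The remaining steps are routine applications of Frobenius reciprocity.
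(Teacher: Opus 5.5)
Your proposal is correct and follows essentially the same route as the paper. For (1) and (3), the paper writes out the induced-character double sum and uses that conjugation preserves the signed cycle type, which is exactly the Frobenius-reciprocity averaging identity you invoke. For (2), it likewise reduces to $V_\pi$ and $\tilde V_\lambda$ via (1) and, for a fixed underlying permutation, sums over the $2^n$ sign vectors, each cycle of length $\mu_j$ contributing $2^{\mu_j-1}(p_{\mu_j}[X+Y]\pm p_{\mu_j}[X-Y])$; your explicit checks (that signed cycle types in $B_k\times B_{n-k}$ combine by union, and that $\prod_i\epsilon_i$ is the product of the cycle signs) just spell out steps the paper leaves implicit.
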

\begin{proof}
    \begin{enumerate}
        \item We use the formula for the induced character
        \[ \tr(\Ind_{B_k \times B_{n-k}}^{B_n} \rho \boxtimes \chi)(\sigma) = \frac{1}{2^n k! n-k!} \sum_{\substack{\tau \in B_n \\ \tau^{-1} \sigma \tau \in B_k \times B_{n-k}} } \tr((\rho \boxtimes \chi)(\tau^{-1} \sigma \tau)) \]
        and so
        \[\ch (\Ind_{B_k \times B_{n-k}}^{B_n} \rho \boxtimes \chi )= \frac{1}{2^{2n} n! k! n-k!} \sum_{\substack{\sigma, \tau \in B_n \\ \tau^{-1} \sigma \tau \in B_k \times B_{n-k}}} \tr((\rho \boxtimes \chi)(\tau^{-1} \sigma \tau)) p_{\sigma^+}[X+Y]p_{\sigma^-}[X-Y]. \]
        Notice that $\sigma$ and $\tau^{-1} \sigma \tau$ have the same signed cycle type because are conjugate and the formula simplifies to
        \[\ch (\Ind_{B_k \times B_{n-k}}^{B_n} \rho \boxtimes \chi )= \frac{1}{2^{n} k! n-k!} \sum_{ \sigma \in B_k \times B_{n-k}} \tr((\rho \boxtimes \chi)( \sigma )) p_{\sigma^+}[X+Y]p_{\sigma^-}[X-Y]= \ch(\rho) \ch(\chi).\]
        \item By definition of $V_{\pi,\lambda}$ and the first assertion, it is enough to prove that $\ch(V_\pi)=s_\pi(X)$ and $\ch( \tilde{V}_\lambda ) =s_\pi(Y)$.
        We split the signed permutations of $B_n$ accordingly the unsigned underlying permutation of $S_n$:
        \[\ch(V_\pi)= \frac{1}{n!} \sum_{\sigma \in S_n} \tr (\Specht_\pi (\sigma)) \frac{1}{2^n}\prod_{i=1}^{\ell(\sigma)} 2^{\sigma_i-1} (p_{\sigma_i}[X+Y] + p_{\sigma_i}[X-Y]) = \ch(\Specht_\pi)(X) = s_\pi(X). \]
        The same proof holds for $\tilde{V}_\lambda$ with the difference that the character depends on the signed cycle type. Each negative cycle multiply the $S_n$-character by $-1$:
        \[\ch(\tilde{V}_\lambda)= \frac{1}{n!} \sum_{\sigma \in S_n} \tr (\Specht_\lambda (\sigma)) \frac{1}{2^n}\prod_{i=1}^{\ell(\sigma)} 2^{\sigma_i-1} (p_{\sigma_i}[X+Y] - p_{\sigma_i}[X-Y]) = \ch(\Specht_\lambda)(Y) = s_\lambda(Y). \]
        \item We apply the formula for the induced character
        \[ \ch (\Ind_{S_n}^{B_n} \chi) = \frac{1}{2^n (n!)^2} \sum_{\substack{\sigma, \tau \in B_n \\ \tau^{-1} \sigma \tau \in S_n}} \tr(\chi(\tau^{-1} \sigma \tau)) p_{\sigma^+}[X+Y] p_{\sigma^-}[X-Y]\]
        and the permutations $\sigma$ and $\tau^{-1} \sigma \tau \in S_n$ have the same cycle type consisting only in positive cycles.
        We obtain
        \[\ch (\Ind_{S_n}^{B_n} \chi) = \frac{1}{n!} \sum_{ \sigma  \in S_n} \tr(\chi( \sigma )) p_{\sigma}[X+Y] = \ch (\chi)[X+Y]. \qedhere\]
    \end{enumerate}
\end{proof}

\begin{remark}
Recall that, for any $n$, there is a distinct $1$-dimensional representation $\sgn_n^B \colon B_n \to \C^{\times}$, called the \textit{sign} representation, corresponding to the $2$-partition $(\emptyset,(1^n))$. Through the Frobenius characteristic construction introduced above, tensoring with $\sgn_n^B$ in degree $n$ gives an involution $\omega_B \colon \Lambda^{(2)} \to \Lambda^{(2)}$. 

Moreover, there is the $1$-dimensional representation obtained as the composition of the projection $B_n \to S_n$ with the $\sgn$ representation of $S_n$, corresponding thus to the $2$-partition $((1^n),\emptyset)$. As above, tensoring with such a representation gives an involution which we denote by $\omega \colon \Lambda^{(2)} \to \Lambda^{(2)}$.
Notice that, when restricted to the subrings $\Lambda(X),\Lambda(Y)$, the map $\omega$ corresponds to the corresponding canonical involution $\omega$ in the ring of symmetric functions.
\end{remark}

\section{Plethystic operations in type \texorpdfstring{$B$}{B}}
\label{sectionplethysm}
We recall here some generalities about plehtysm for symmetric functions and, more generally, $\lambda$-rings.

\subsection{\texorpdfstring{$\lambda$}{lambda}-rings and plethysm}
We recall the definition and some basic properties of $\lambda$-rings. We refer for instance to \cite{getzler-mhs} for more details.
\begin{definition}

A $\lambda$-ring $R$ is a commutative $\Q$-algebra with homomorphisms $\psi_d \colon R \to R$ for any $d \geq 1$ such that $\psi_{d'}(\psi_d(r))=\psi_{dd'}(r)$ for every $d,d'\in \N_{>0}$ and $r \in R$. The morphisms $\psi_d$ are called  Adams operations. 

A morphism $f\colon R \to R'$ of $\lambda$-rings is a morphism of $\Q$-algebras commuting with the Adams operations.
\end{definition}

For any partition $\mu=(\mu_1,\dots,\mu_h)$, we denote by $\psi_{\mu}\colon R \to R$ the homomorphism defined by $\psi_{\mu}(r)=\psi_{\mu_1}(r)\cdots \psi_{\mu_{h}}(r)$.

\begin{example}
The ring $\Q(t)$ is a $\lambda$-ring , with Adams operations $\psi_d(f(t))=f(t^d)$. Notice that we have $$ \psi_d(t^n)=t^{nd} .$$

The ring $\Lambda(X)$ is a $\lambda$-ring, with Adams operation $\psi_d(f(x_1,\dots,x_n,\dots))=f(x_1^d,\dots,x_n^d,\dots,)$. We remark that we have $$\psi_d(p_n)=p_{nd} .$$
\end{example}
Notice that, for any $\lambda$-ring $R$ and any element $r \in R$, we can define a unique $\lambda$-ring homomorphism $\Lambda(X) \to R$ as $$p_{\lambda} \to \psi_{\lambda}(r) .$$

More precisely, for any $\lambda$-ring $R$, we can define a \textit{plethysm} linear on the first component $$[-]\colon \Lambda(X) \times R \to R $$ such that we have 
\begin{itemize}
\item For any $r \in R$, the map $[-]\circ r \colon \Lambda(X) \to R$ is a ring homomorphism.
\item $p_d[a]=\psi_d(a)$.
\end{itemize}

In particular, given the $\lambda$-ring structure of $\Lambda(X)$ introduced above, for any $f,g \in \Lambda(X)$, we can define their \textit{plethystic composition} $f[g]$. Notice that this latter operation shares the following properties \begin{flalign*}
    &(af+bh)[g]=af[g]+bh[g] \text{ for any } f,h \in \Lambda(X) \text{ and } a,b \in \Q \\
    &(fh)[g]=f[g]h[g]\\
    &p_n[p_m]=p_{nm} \text{ for any } n,m\\
    &p_n[f+g]=p_n[f]+p_n[g]
\end{flalign*}

\begin{remark}
\label{remark-plethysm-minus}
While $p_n[-g]=-p_n[g]$ for any $g \in R$, for an homogenous $f \in \Lambda$ of degree $n$, we find in general 
\begin{equation}
    f[-g]=(-1)^{n}\omega(f)[g]
    \label{plethysm-minus}
\end{equation}    
\end{remark}

It is possible to show that, for any $f,g \in R$, for any $k \in \N$ and any $\rho \in \mathcal{P}_k$, we have \begin{align}
\label{plethysm-schur-product}
   & s_\rho [fg] = \sum_{\alpha,\beta \vdash k} g_{\alpha\beta\rho} s_\alpha[f] s_\beta[g]\\
   & s_{\rho}[f+g]=\sum_{\substack{\pi,\lambda \\|\pi|+|\lambda|=k }}c^{\rho}_{\pi,\lambda}s_{\pi}[f]s_{\lambda}[g]
  \end{align}
where $g_{\alpha\beta\rho} ,c^{\rho}_{\pi,\lambda}$ are respectively the Kronecker and Littlewood-Richardson coefficients. We sketch for instance a proof of the first identity. The second one is proven similarly.

The left hand side is indeed $$\sum_{\mu \in \mathcal{P}_k} z_{\mu}^{-1}\chi^{\lambda}_{\mu}p_{\mu}[f]p_{\mu}[g] $$ and the right hand side is $$\sum_{\mu_1,\mu_2 \in \mathcal{P}_k}\sum_{\alpha,\beta \vdash k}g_{\alpha,\beta,\rho}\chi^{\alpha}_{\mu_1}\chi^{\beta}_{\mu_2}z_{\mu_1}^{-1}z_{\mu_2}^{-1}p_{\mu_1}[f]p_{\mu_2}[g] .$$ The first identity of eq.\ \eqref{plethysm-schur-product} is now a consequence of the following identity:
$$\sum_{\alpha,\beta \vdash k}g_{\alpha,\beta,\rho}\chi^{\alpha}_{\mu_1}\chi^{\beta}_{\mu_2}z_{\mu_1}^{-1}z_{\mu_2}^{-1}=\delta_{\mu_1,\mu_2}\chi^{\lambda}_{\mu_1}z_{\mu_1}^{-1} $$ which is a consequence of the orthogonal relations for the characters of $S_n$.

Recall that the plethysm of symmetric function has the following representation theoretic interpretation. Given $\rho,\chi$ irreducible representations of $S_k,S_l$ respectively, we have the following identity.
\begin{equation}
\label{plethysm-for-reps}
    \ch( \Ind_{S_k \wr S_l}^{S_{kl}} \rho^{\boxtimes l} \otimes \chi) = \ch(\chi)[\ch(\rho)]
\end{equation}

\subsubsection{Plethysm of formal power series}
\label{paragraph-plethysm-series}
For a $\lambda$-ring $R$, the ring $R[[t]]$ of formal power series with coefficients in $R$ has a natural $\lambda$-ring structure with $\psi_d(\sum_n f_nt^n)=\sum_n \psi_d(f_n)t^{nd}$. For any $f \in tR[[t]]$, we put $$\Exp(f) \coloneqq \exp\left(\sum_{d \geq 1}\dfrac{\psi_d(f)}{d}\right) .$$  Notice that the term on the right hand side is well defined since $f_0=0$. The function $\Exp:tR[[t]] \to 1+tR[[t]]$ is called \textit{plethystic exponential.}
 The plethystic exponential admits an inverse map $\Logg:1+tR[[t]] \to tR[[t]]$, called plethystic logarithm. For more details see for instance \cite{getzler-mhs}. Given $f \in 1+tR[[t]]$ and $r \in R$, we put $$f^{\langle r \rangle}\coloneqq \Exp(\Logg(f)r) .$$

\subsection{Plethysm of bisymmetric  functions}

Notice that the ring $\Lambda(X) \otimes \Lambda(Y)$ has a natural $\lambda$-ring structure, with $p_d[f(x_i,y_j)_{i,j}]=f(x_i^d,y_j^d)_{i,j}$. Notice that, for any $(\lambda,\mu) \in \mathcal{P}^{(2)}$, we have \begin{equation}
    p_d[p_{(\lambda,\mu)}]=p_{(d\lambda,d\mu)}.
\end{equation}

When dealing with plethysm of bisymmetric functions, we will  sometimes use the following convention. We will, in fact, work with the ring $\Lambda(X) \otimes \Lambda(Y) \otimes \Q[\epsilon]$ with the $\lambda$-ring structure defined by putting $p_k[\epsilon]=\epsilon^k$. The variable  $\epsilon$ will always ultimately be always evaluated at $\epsilon=-1$. This apparent complication is introduced because plethysm does not commute with evaluation, see for instance Remark \ref{remark-plethysm-minus}.

\begin{remark}
 Notice that, we can think of $X, Y$ as elements of $\Lambda^{(2)}$, corresponding to $X=p_1[X]$ and $Y=p_1[Y]$ and $X+Y=p_1[X]+p_1[Y]=p_1[X+Y]$. For the $\lambda$-ring structure just introduced, we have $p_n[X+Y]=p_n[p_1[X]+p_1[Y]]=p_n[X]+p_n[Y]$ and similarly for $X-Y$ and $p_n[X-Y]$ which explains the notation introduced at the beginning of \cref{section-frobeniuscharacteristic}.
\end{remark}

In the following paragraph, we are going to describe certain properties of the $\lambda$-ring structure just introduced and then show how to compose plethystically two bisymmetric functions.

We start by remarking the following properties.

    \begin{lemma}
    \label{lem:ind_Z_2xS_n}
    We have
        \[\ch(\Ind_{\Z_2 \times S_n}^{B_n} 1 \boxtimes V_\rho) = \sum_{ \substack{\lvert \pi \rvert +  \lvert \lambda \rvert =n \\ \lvert \lambda \rvert \textnormal{ even}}} c_{\pi, \lambda}^\rho s_\pi(X) s_\lambda(Y) = \frac{1}{2}(s_\rho[X+Y]+s_\rho[X+\epsilon Y])_{|\epsilon=-1} \]
        and 
        \[\ch(\Ind_{\Z_2 \times S_n}^{B_n} (-1) \boxtimes V_\rho) = \sum_{ \substack{\lvert \pi \rvert +  \lvert \lambda \rvert =n \\ \lvert \lambda \rvert \textnormal{ odd}}} c_{\pi, \lambda}^\rho s_\pi(X) s_\lambda(Y) = \frac{1}{2}(s_\rho[X+Y] - s_\rho[X+\epsilon Y])_{|\epsilon=-1} \]
        where $\Z_2 \simeq Z(B_n)$ and $c_{\pi,\lambda}^{\rho}$ are the Littlewood–Richardson coefficients.

        More generally, for reducible representations $V$ we have
        \[\ch(\Ind_{\Z_2 \times S_n}^{B_n} 1 \boxtimes V) = \frac{1}{2}(\ch(V)[X+Y]+\ch(V)[X+\epsilon Y])_{|\epsilon=-1} \]
        and 
        \[\ch(\Ind_{\Z_2 \times S_n}^{B_n} (-1) \boxtimes V) = \frac{1}{2}(\ch(V)[X+Y] - \ch(V)[X+\epsilon Y])_{|\epsilon=-1} \]
    \end{lemma}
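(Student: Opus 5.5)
The plan is to first prove the general formulas for an arbitrary $S_n$-representation $V$, by a direct character computation, and then recover the Schur-function expressions by specializing to $V=\Specht_\rho$.

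\textbf{Step 1: write the induced characters as sums.} Here $\Z_2 \times S_n \subseteq B_n$ is the subgroup generated by the centre $\{\pm 1\}$ (with $-1=(\epsilon,\mathrm{id})$, $\epsilon=(-1,\dots,-1)$) and $S_n$. Write $\theta=\pm 1$ for the character of $\Z_2$. Then
\[
\Ind_{\Z_2\times S_n}^{B_n} \theta\boxtimes V \;=\; \tfrac12\Bigl(\Ind_{S_n}^{B_n} V \;+\; \theta\cdot \Ind_{S_n}^{B_n}(V\otimes c)\Bigr)
\]
in the sense of Frobenius characteristics. Here $c$ is the twist by the central element, i.e.\ the second term counts the contribution of the coset $(-1)S_n$. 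Concretely, I would use the induced-character formula for the subgroup $\Z_2\times S_n$ of index $2^{n-1}$. The elements of $\Z_2\times S_n$ are $\sigma$ and $-\sigma$ with $\sigma\in S_n$, and they carry characters $\tr V(\sigma)$ and $\theta\,\tr V(\sigma)$ respectively.

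\textbf{Step 2: apply the Frobenius characteristic.} By the same manipulation as in part (3) of the previous lemma, conjugacy invariance of the signed cycle type collapses the double sum. This gives
\[
\ch = \frac{1}{2\,n!}\sum_{\sigma\in S_n}\tr V(\sigma)\Bigl(p_{(\sigma)}\text{-type of }\sigma \;+\; \theta\,\cdot(\text{signed type of }-\sigma)\Bigr).
\]

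\textbf{Step 3: compute the signed cycle type of $-\sigma$.} A cycle of $\sigma$ of length $d$ becomes, in $-\sigma$, a cycle of sign $(-1)^d$. Hence it contributes $p_d[X+Y]$ if $d$ is even and $p_d[X-Y]$ if $d$ is odd. With the $\epsilon$-convention, $p_d[X+\epsilon Y]=p_d[X]+\epsilon^d p_d[Y]$, which at $\epsilon=-1$ is exactly this contribution. Multiplying over the cycles gives $p_\sigma[X+\epsilon Y]|_{\epsilon=-1}$, and summing against $\tr V$ gives $\ch(V)[X+\epsilon Y]|_{\epsilon=-1}$. The first term gives $\ch(V)[X+Y]$. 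Together these yield both general formulas.

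\textbf{Step 4: specialize to the Schur expressions.} Take $V=\Specht_\rho$ and expand using the Littlewood--Richardson identity (eq.\ \eqref{plethysm-schur-product}):
\[
s_\rho[X+\epsilon Y]=\sum_{\pi,\lambda} c^\rho_{\pi\lambda}\, s_\pi[X]\, s_\lambda[\epsilon Y].
\]
Since $s_\lambda$ is homogeneous, $s_\lambda[\epsilon Y]=\epsilon^{|\lambda|}s_\lambda(Y)$. Setting $\epsilon=-1$ and taking half-sums and half-differences isolates the terms with $|\lambda|$ even, respectively odd.

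The main obstacle is Step 3, in particular the careful bookkeeping of the normalizations ($2^n n!$ against $2\,n!$ and the index of the subgroup). I would double-check these on $n=1$: there $\Z_2\times S_1=B_1$, and the result must be $s_1(X)$ for $\theta=1$ and $s_1(Y)$ for $\theta=-1$.
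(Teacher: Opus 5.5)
Your proof is correct, but it takes a different route from the paper's.

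The paper works with multiplicities. By Frobenius reciprocity and Mackey's theorem it shows that $\langle \Ind_{\Z_2\times S_n}^{B_n} 1\boxtimes V_\rho , V_{\pi,\lambda}\rangle$ equals $c^\rho_{\pi,\lambda}$ when $|\lambda|$ is even and vanishes otherwise, because the centre acts on $V_{\pi,\lambda}$ by $(-1)^{|\lambda|}$. So the Schur expansion comes first. The odd case follows because the two inductions add up to $\Ind_{S_n}^{B_n}V_\rho$, whose characteristic is $s_\rho[X+Y]$. The $\epsilon$-forms and the reducible case then come from the Littlewood--Richardson expansion and linearity.

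You instead compute in the power-sum basis via
\[
\ch\bigl(\Ind_H^{B_n}\phi\bigr)=\frac{1}{|H|}\sum_{h\in H}\phi(h)\,p_{\mathrm{type}(h)},\qquad H=\Z_2\times S_n .
\]
The key observation is that a $d$-cycle of $\sigma$ becomes a cycle of sign $(-1)^d$ in $-\sigma$. This gives the general-$V$ formulas directly, and the Schur expansion only at the end.

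Your argument is more elementary and self-contained: it needs neither Mackey's theorem nor the parametrization of irreducible $B_n$-representations. It also treats virtual $V$ uniformly and shows exactly why $X+\epsilon Y$ appears. The paper's argument makes visible the conceptual reason for the parity condition, namely the central character of $V_{\pi,\lambda}$, and it identifies directly which irreducibles occur.

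Three small points.
\begin{enumerate}
\item The displayed identity in your Step 1 is not meaningful as written, since ``$V\otimes c$'' is not an $S_n$-representation. The second term is really half the induction of a class function on $\Z_2\times S_n$ supported on the coset $-S_n$. Step 2 does not use Step 1, so you can simply drop it.
\item The normalization you flagged is immediate: $\frac{1}{|B_n|}\cdot\frac{1}{|H|}\cdot|B_n|=\frac{1}{2\,n!}$.
\item You use $\epsilon$ both for the central element and for the formal variable with $p_k[\epsilon]=\epsilon^k$. Rename one of them.
\end{enumerate}
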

    \begin{proof}
    We use the Mackey subgroup theorem to obtain
    \begin{align*}
        \langle \Ind_{\Z_2 \times S_n}^{B_n} 1 \boxtimes V_\rho , V_{\pi,\lambda}\rangle &= \langle  1 \boxtimes V_\rho, \Res_{\Z_2 \times S_n}^{B_n} V_{\pi,\lambda}\rangle \\
        & = \langle  1 \boxtimes V_\rho, \Ind^{\Z_2 \times S_n} _{\Z_2 \times S_k \times S_{n-k}} \Res_{\Z_2 \times S_k \times S_{n-k}} ^{B_k \times B_{n-k}} V_{\pi,\lambda}\rangle \\
        & = \langle \Res^{\Z_2 \times S_n} _{\Z_2 \times S_k \times S_{n-k}}  1 \boxtimes V_\rho, \Res_{\Z_2 \times S_k \times S_{n-k}} ^{B_k \times B_{n-k}} V_{\pi,\lambda}\rangle \\
        &= \langle 1, (-1)^{n-k} \rangle \langle \Res^{S_n} _{S_k \times S_{n-k}} \Specht_\rho, \Specht_\pi \boxtimes \Specht_\lambda \rangle.
    \end{align*}
    That is zero if $\lvert \lambda \rvert$ is odd and otherwise equal to $c_{\pi,\lambda}^{\rho}$. 
    
    The second identity follows by a similar proof or by:
        \[ \ch(\Ind_{\Z_2 \times S_n}^{B_n} 1 \boxtimes V_\rho) + \ch(\Ind_{\Z_2 \times S_n}^{B_n} (-1) \boxtimes V_\rho) = s_\rho [X+Y]
        = \sum_{\pi, \lambda} c_{\pi,\lambda}^{\rho} s_\pi(X)s_\lambda(Y), \] where the last equality is a direct consequence of eq.\ \eqref{plethysm-schur-product}.
        The case of reducible representations follows by linearity.
    \end{proof}

\begin{lemma}
For any $\rho,\pi,\lambda \in \mathcal{P}$, we have:
\label{lem:ind_B_j_S_k}
    \[
        \ch( \Ind_{B_j \wr S_k}^{B_{jk}} V_{\pi,\lambda}^{\boxtimes k} \otimes V_\rho) = s_\rho[s_\pi(X)  s_\lambda(Y)].
    \]
\end{lemma}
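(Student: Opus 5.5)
The plan is to imitate the classical proof of \eqref{plethysm-for-reps}: compute the Frobenius characteristic of the induced representation directly from the character of the representation of the wreath product $B_j \wr S_k$. Write $V = V_{\pi,\lambda}$ and $U = V^{\boxtimes k} \otimes V_\rho$, viewed as a representation of $B_j \wr S_k = B_j^k \rtimes S_k \subseteq B_{jk}$, where $V_\rho$ denotes the Specht module $\Specht_\rho$ pulled back along $B_j\wr S_k\to S_k$.

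\textbf{Step 1: reduce to the subgroup.} As in part (1) of the lemma on properties of the type $B$ Frobenius map, the induced character formula together with conjugation invariance of the signed cycle type gives
\[ \ch(\Ind_{B_j\wr S_k}^{B_{jk}} U) = \frac{1}{|B_j|^k k!}\sum_{x \in B_j \wr S_k} \tr(U(x))\, p_{x^+}[X+Y]\,p_{x^-}[X-Y], \]
where the signed cycle type of $x$ is taken in $B_{jk}$.

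\textbf{Step 2: character of $U$.} Write $x=(g_1,\dots,g_k;\tau)$. For each cycle $c=(i_1\cdots i_d)$ of $\tau$, let $g_c=g_{i_d}\cdots g_{i_1}\in B_j$ be its cycle product. The standard trace computation for tensor-induced representations (permuting tensor factors) gives
\[ \tr(U(x)) = \chi_\rho(\tau)\prod_{c}\chi_V(g_c). \]

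\textbf{Step 3: signed cycle type of $x$ in $B_{jk}$.} I claim that a cycle $c$ of $\tau$ of length $d$, with $g_c$ of signed type $(\alpha,\beta)$, contributes $p_d[p_{(\alpha,\beta)}]=p_{(d\alpha,d\beta)}$ to the monomial. Concretely, each cycle of $g_c$ of length $a$ lifts to a cycle of $x$ of length $da$. The sign of the lifted cycle is the product of all the $\pm1$ entries it passes through, and this product equals the sign of the original cycle of $g_c$. So positive cycles stay positive, contributing $p_{da}[X+Y]$, and negative cycles stay negative, contributing $p_{db}[X-Y]$. This sign bookkeeping is the step I expect to need the most care: one has to fix conventions for the multiplication in $B_j^k\rtimes S_k$ and check that the sign vector of the lifted cycle multiplies out exactly to that of $g_c$.

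\textbf{Step 4: sum over the $g_i$.} Fix $\tau$ and a cycle $c$ of length $d$. As $(g_{i_1},\dots,g_{i_d})$ ranges over $B_j^d$, the product $g_c$ takes each value in $B_j$ exactly $|B_j|^{d-1}$ times. Hence
\[ \frac{1}{|B_j|^d}\sum_{g_{i_1},\dots,g_{i_d}} \chi_V(g_c)\,p_d\!\left[p_{(g_c^+,g_c^-)}\right] = p_d\!\left[\frac{1}{|B_j|}\sum_{g\in B_j}\chi_V(g)\,p_{(g^+,g^-)}\right] = p_d[\ch V]. \]
This uses linearity of $p_d$ and the fact that $\chi_V$ takes rational (integer) values on $B_j$. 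Substituting into Step 1, the whole expression becomes
\[ \frac{1}{k!}\sum_{\tau\in S_k}\chi_\rho(\tau)\,p_\tau[\ch V] = s_\rho[\ch V]. \]
Finally, $\ch V = s_\pi(X)s_\lambda(Y)$ by part (2) of the same lemma, which gives the statement.
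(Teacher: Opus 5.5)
Your proof is correct, but it takes a genuinely different route from the paper.

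\textbf{The paper's route.} The paper never evaluates a character on $B_j\wr S_k$. It writes $V_{\pi,\lambda}=\Ind_{B_l\times B_{j-l}}^{B_j}V_\pi\boxtimes\tilde V_\lambda$ with $l=|\pi|$ and pushes this induction through the wreath product. It then splits the diagonal $S_k$ inside $B_l\wr S_k\times B_{j-l}\wr S_k$ using Kronecker coefficients $g_{\alpha\beta\rho}$, and via \eqref{plethysm-schur-product} reduces to the pure cases $\lambda=\emptyset$ or $\pi=\emptyset$. In those cases the characteristic lies in $\Lambda(X)$, resp.\ $\Lambda(Y)$, so it is determined by its restriction to the symmetric group, where the classical identity \eqref{plethysm-for-reps} applies.

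\textbf{Your route.} You do the direct Macdonald-style computation: the induced-character formula, the trace formula for tensor-induced representations, and equidistribution of the cycle product $g_c$ over $B_j$. The type $B$ input is your Step 3: a $d$-cycle of $\tau$ turns an $a$-cycle of $g_c$ into a $da$-cycle of $x$ with the same sign. The cleanest check of the sign, which you flagged as delicate, is that $x^{da}$ acts on the lifted cycle exactly as $g_c^{a}$ acts on the original one, i.e.\ by that sign.

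\textbf{Comparison.} Your argument is self-contained: it needs no Kronecker coefficients and no restriction step, which the paper leaves implicit. Moreover, Steps 1--4 never use irreducibility of $V$ or $V_\rho$. So your argument proves at once the general identity $\ch(\Ind_{B_j\wr S_k}^{B_{jk}}W^{\boxtimes k}\otimes V)=\ch(V)[\ch(W)]$ of \Cref{thm:Ind_repr_wr0}. That makes the paper's later passage from irreducibles to arbitrary $W$, through conjugacy-class indicators and \eqref{linearityplethysm}, unnecessary. In exchange, the paper's route avoids all cycle-level sign bookkeeping in $B_{jk}$ by transporting the known type $A$ result.
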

\begin{proof}
    Let $l= \lvert \pi \rvert$ and notice that 
    \[ \Ind_{B_j \wr S_k}^{B_{jk}} V_{\pi,\lambda}^{\boxtimes k} \otimes V_\rho = \Ind_{(B_l \times B_{j-l}) \wr S_k}^{B_{jk}} V_{\pi}^{\boxtimes k} \boxtimes \tilde{V}_{\lambda}^{\boxtimes k} \otimes V_\rho \]
    and that 
    \[\Ind_{(B_l \times B_{j-l}) \wr S_k}^{B_{l} \wr S_k \times B_{j-l} \wr S_k} V_{\pi}^{\boxtimes k}\boxtimes \tilde{V}_{\lambda}^{\boxtimes k} \otimes V_\rho = \bigoplus_{\alpha,\beta \vdash k} ((V_{\pi, \emptyset}^{\boxtimes k} \otimes V_\alpha) \boxtimes (\tilde{V}_{\emptyset,\lambda}^{\boxtimes k} \otimes V_\beta))^{\oplus g_{\alpha\beta\rho}}\]
    where $S_k$ is embedded in $B_{l} \wr S_k \times B_{j-l} \wr S_k$ diagonally, essentially because $\Ind_{S_k}^{S_k \times S_k} V_\rho = \oplus_{\alpha,\beta \vdash k} (V_\alpha \boxtimes V_\beta)^{\oplus g_{\alpha\beta\rho}}$, see eq.\ \eqref{plethysm-schur-product}.
    In particular,
    \[ \Ind_{B_j \wr S_k}^{B_{jk}} V_{\pi,\lambda}^{\boxtimes k} \otimes V_\rho = \bigoplus_{\alpha,\beta \vdash k} \left( \Ind^{B_{jk}}_{B_{l} \wr S_k \times B_{j-l} \wr S_k} (V_{\pi, \emptyset}^{\boxtimes k}\boxtimes V_\alpha) \boxtimes (\tilde{V}_{\emptyset,\lambda}^{\boxtimes k} \boxtimes V_\beta)\right)^{\oplus g_{\alpha\beta\rho}}\]
    By taking the Frobenius characteristic we have
    \[\ch( \Ind_{B_j \wr S_k}^{B_{jk}} V_{\pi,\lambda}^{\boxtimes k} \otimes V_\rho) = \sum_{\alpha,\beta \vdash k} g_{\alpha\beta\rho} \ch \left(\Ind^{B_{lk}}_{B_{l} \wr S_k } V_{\pi, \emptyset}^{\boxtimes k}\otimes V_\alpha \right) \ch \left(\Ind^{B_{(j-l)k}}_{B_{j-l} \wr S_k } V_{ \emptyset, \lambda}^{\boxtimes k}\otimes V_\beta \right).  \]

    Now it is enough to prove the claimed identity for $\lambda= \emptyset$ or $\pi=\emptyset$.
    Clearly, $\ch \left(\Ind^{B_{lk}}_{B_{l} \wr S_k } V_{\pi, \emptyset}^{\boxtimes k}\otimes V_\alpha \right) \in \Lambda(X)$ and  $\ch \left(\Ind^{B_{(j-l)k}}_{B_{j-l} \wr S_k } V_{ \emptyset, \lambda}^{\boxtimes k}\otimes V_\beta \right) \in \Lambda(Y)$ and so it is enough to compute their restriction to $S_n$.
    This reduces to the standard plethysm for the symmetric group.
\end{proof}

The Lemma above implies the following Theorem, which is the analogue of eq.(\ref{plethysm-for-reps}) in this context.

\begin{theorem}
\label{thm:Ind_repr_wr0}
    For every representation $V$ of $S_k$ and $W$ of $B_j$ we have
    \begin{align}
        & \ch( \Ind_{B_j \wr S_k}^{B_{jk}} W^{\boxtimes k} \otimes V) = \ch(V)[\ch(W)].
    \end{align}
\end{theorem}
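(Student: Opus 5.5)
The plan is to reduce the statement to \Cref{lem:ind_B_j_S_k} by linearity. We need that linearity in $V$ and the appropriate "polynomial" behaviour in $W$ match on both sides.

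First, linearity in $V$. The functor $V \mapsto \Ind_{B_j \wr S_k}^{B_{jk}} W^{\boxtimes k} \otimes V$ is additive, with $S_k$ acting through the projection $B_j \wr S_k \to S_k$. The right-hand side $\ch(V)[\ch(W)]$ is linear in $\ch(V)$ by the properties of plethysm. So we may assume $V = V_\rho = \Specht_\rho$ is irreducible.

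Next, the dependence on $W$. Write $W = \bigoplus_i W_i$ as a direct sum of irreducibles $W_i = V_{\pi_i,\lambda_i}$, possibly with repetitions. The tensor power $W^{\boxtimes k}$, as a $B_j \wr S_k$-representation, decomposes according to the multiset of indices. For a composition $k = k_1 + \dots + k_r$ of $k$ among the distinct summands, one obtains terms
\[ \Ind_{\prod_i B_j \wr S_{k_i}}^{B_j \wr S_k} \left(\boxtimes_i W_i^{\boxtimes k_i}\right), \]
and tensoring with $\Res V_\rho$ gives, by Mackey and the Littlewood--Richardson rule, multiplicities $c^\rho_{\mu_1,\dots,\mu_r}$ in front of $\boxtimes_i (W_i^{\boxtimes k_i}\otimes V_{\mu_i})$. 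Taking $\Ind^{B_{jk}}$ and applying part (1) of the lemma on $\ch$ of $\Ind_{B_k\times B_{n-k}}^{B_n}$, the characteristic becomes
\[ \sum c^\rho_{\mu_1,\dots,\mu_r} \prod_i \ch\left(\Ind_{B_j\wr S_{k_i}}^{B_{jk_i}} W_i^{\boxtimes k_i}\otimes V_{\mu_i}\right) = \sum c^\rho_{\mu_1,\dots,\mu_r}\prod_i s_{\mu_i}[\ch W_i], \]
where the equality uses \Cref{lem:ind_B_j_S_k}. The resulting sum is exactly $s_\rho[\sum_i \ch W_i]$ by the iterated second identity of eq.\ \eqref{plethysm-schur-product}. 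This is the same argument as the classical proof of eq.\ \eqref{plethysm-for-reps} for $S_n$.

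The main obstacle is the bookkeeping in the decomposition of $\Res^{S_k}_{\prod S_{k_i}}$ combined with the wreath-product induction. Specifically, one must check the identification
\[ \Ind_{B_j\wr S_k}^{B_{jk}}\Ind_{\prod B_j \wr S_{k_i}}^{B_j\wr S_k}(\cdots)\otimes V_\rho \cong \Ind_{\prod B_j\wr S_{k_i}}^{B_{jk}}\left(\cdots\otimes\Res V_\rho\right). \]
This follows from the projection formula $\Ind(A)\otimes B \cong \Ind(A\otimes \Res B)$. If one wants $W$ virtual, one extends using the $\lambda$-ring structure, i.e.\ the identity $f[-g]$ of \Cref{remark-plethysm-minus}; alternatively, one states the theorem only for genuine representations, as written.
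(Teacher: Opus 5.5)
Your argument is correct, but it handles the non-linear dependence on $W$ differently from the paper.

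\textbf{The paper's route.} The paper never decomposes $W$. It expands $V$ in the basis of class functions $1_{C_\lambda}$ of $S_k$, i.e.\ $\ch(V)$ in power sums $p_\lambda/z_\lambda$. It then observes that for a $d$-cycle the class function $\chi_W^{d}\otimes 1_{C_{(d)}}$ is additive in $W$. The reason is that a $d$-cycle permutes the $d$ tensor positions transitively, so only the ``diagonal'' summands $W_i^{\otimes d}$ contribute to the trace. Combined with \Cref{lem:ind_B_j_S_k}, re-expanded from $s_\rho$ to $p_d$, this gives $p_d[\ch(W)]$ for arbitrary $W$. A general $1_{C_\lambda}$ is then written as a multiple of an induced product of cycle indicators from a Young subgroup, and multiplicativity of $\ch$ finishes the proof.

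\textbf{Your route.} You keep $V=\Specht_\rho$ and genuine modules, and attack the tensor power directly. You use the $S_k$-orbit decomposition of $W^{\otimes k}$ and the projection formula (this, rather than Mackey, is what that step uses). You then apply the branching rule $\Res^{S_k}_{\prod S_{k_i}}\Specht_\rho=\bigoplus c^\rho_{\mu_1,\dots,\mu_r}\,\Specht_{\mu_1}\boxtimes\cdots\boxtimes\Specht_{\mu_r}$ and match the result with the iterated second identity of \eqref{plethysm-schur-product}.

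\textbf{What each buys.} Your proof is the classical module-theoretic proof of \eqref{plethysm-for-reps} transplanted to $B_{jk}$. It yields an explicit decomposition of the induced module and never leaves honest representations. The paper's proof avoids all orbit and Littlewood--Richardson bookkeeping by linearizing in $W$ cycle by cycle. Because it works with the class function $\chi_W^{k}\otimes 1_{C_\lambda}$, it also extends without change to virtual (and graded) $W$, which is the kind of input fed into the analogous statements later on. In your approach that extension needs the separate sign argument via \Cref{remark-plethysm-minus}, which you only sketch. This is harmless here, since the theorem is stated for genuine representations.
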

\begin{proof}
Since the wanted equality is linear in the entry $V$, it is enough to show this for any basis of the class functions of $S_k$. In particular, it is enough to show that $ \ch( \Ind_{B_j \wr S_k}^{B_{jk}} \chi_W^{k} \otimes 1_{C_{\lambda}})=\ch(1_{C_{\lambda}})[\ch(W)]$ for every $\lambda$ partition of $k$, where $C_{\lambda}$ is the corresponding conjugacy class of $S_{k}$ and $1_{C_{\lambda}}$ its characteristic function. Recall that $\ch(1_{C_{\lambda}})=p_{\lambda}/z_{\lambda}$. Moreover, if $\lambda=(\lambda_1,\dots,\lambda_r)$, we have $$p_{\lambda}[\ch(W)]=\prod_{i=1}^r p_{\lambda_i}[\ch(W)] .$$

Notice that, for any two representations $W_1,W_2$ of $B_j$ and any $d \in \mathbb{N}_{>0}$, we have that the following equality of class functions \begin{equation}(\chi_{W_1} \oplus \chi_{W_2})^{d} \otimes 1_{C_{(d)}}=\chi_{W_1}^d \otimes 1_{C_{(d)}} \oplus \chi_{W_2}^d \otimes 1_{C_{(d)}} .\label{linearityplethysm}\end{equation}

Eq.(\ref{linearityplethysm}) above is not true in general, replacing $1_{C_{(d)}}$ with another class function. For the function $1_{C_{(d)}}$ this comes from the fact that a cycle of length $d$ acts transitively on the set $\{1,\dots,d\}$.

From \Cref{lem:ind_B_j_S_k} and the linearity of the plethysm $p_d[-]$, we deduce that for any representation $W$ of $B_j$ and any $d$, we have $$\ch( \Ind_{B_j \wr S_d}^{B_{jd}} \chi_W^{k} \otimes z_{(d)}1_{C_{(d)}})=p_d[\ch(W)] .$$

Notice now that we have $$1_{C_{\lambda}}=\dfrac{z_{(\lambda_1)} \cdots z_{(\lambda_r)}}{z_{\lambda}} \Ind_{S_{\lambda_1} \times \cdots \times S_{\lambda_r}}^{S_k}(1_{C_{(\lambda_1)}} \boxtimes \cdots \boxtimes 1_{C_{(\lambda_r)}}) .$$
We have therefore \begin{align*}&\Ind_{B_j \wr S_k}^{B_{jk}} \chi_W^{k} \otimes 1_{C_{\lambda}}=\\ &=\dfrac{z_{(\lambda_1)} \cdots z_{(\lambda_r)}}{z_{\lambda}}\Ind_{B_j \wr S_k}^{B_{jk}} \Ind^{B_j \wr S_k}_{(B_j \wr S_{\lambda_1}) \times \cdots (B_j \wr \times S_{\lambda_r})}(\chi_W^{\lambda_1} \otimes 1_{C_{(\lambda_1)}} \boxtimes \cdots \boxtimes \chi_W^{\lambda_r} \otimes 1_{C_{(\lambda_r)}} )=\\ &=\dfrac{z_{(\lambda_1)} \cdots z_{(\lambda_r)}}{z_{\lambda}} \Ind^{B_{jk}}_{B_{j \lambda_1} \times \cdots \times B_{j \lambda_r}} \Ind^{B_{j \lambda_1} \times \cdots \times B_{j \lambda_r}}_{(B_j \wr S_{\lambda_1}) \times \cdots (B_j \wr \times S_{\lambda_r})}(\chi_W^{\lambda_1} \otimes 1_{C_{(\lambda_1)}} \boxtimes \cdots \boxtimes \chi_W^{\lambda_r} \otimes 1_{C_{(\lambda_r)}} )=\\ &=\prod_{i=1}^r \ch(\Ind^{B_{j\lambda_i}}_{B_j \times S_{\lambda_i}}(\chi_W^{ \lambda_i} \otimes 1_{C_{(\lambda_i)}}))=\dfrac{p_{\lambda}[\ch(W)]}{z_{\lambda}} . \qedhere
\end{align*}
\end{proof}

\subsubsection{Plethystic composition of bisymmetric functions}

For every symmetric function $f \in \Lambda^{(2)}$ we denote $f^{\even}$ and $f^\odd$ the even and odd part with respect to the $Y$-grading.
Define $$(f_1\otimes f_2)[g] \coloneqq f_1[g^\even] f_2[g^\odd] \in \Lambda(X)\otimes \Lambda(Y)$$ for every $f_1 \in \Lambda(X)$ and $f_2 \in \Lambda(Y)$ and extend it by linearity to $f[g]$ for every $f,g \in \Lambda(X) \otimes \Lambda(Y)$. Observe that if $f,g \in \Lambda(X) \subset \Lambda(X) \otimes \Lambda(Y)$ this new operation $f[g]$ agrees with the classical plethysm.

Fix $f \in \Lambda(X) \otimes \Lambda(Y)$, for every $d \in \N$, we compute now  $p_{(d,\emptyset)}[f]$ and $p_{(\emptyset,d)}[f]$. We have that $p_{(d,\emptyset)}=p_d[X]+p_d[Y]$ and $p_{(\emptyset,d)}=p_d[X]-p_d[Y]$. We have thus $$p_{(d,\emptyset)}[f]=p_d[f^{\even}]+p_d[f^{\odd}]=p_d[f]$$ and $$p_{(\emptyset,d)}[f]=p_d[f^{\even}]-p_d[f^{\odd}]=p_d[f^{\even}-f^{\odd}] .$$ In particular, both the operations $p_{(d,\emptyset)}[-]$ and $p_{(\emptyset,d)}[-]$ are linear.

More generally, we deduce that, for any $(\alpha,\beta) \in \mathcal{P}^{(2)}$, we have  \begin{equation}p_{(\alpha,\beta)}[f]=\prod_{i=1}^{l(\alpha)} p_{((\alpha_i),\emptyset)}[f] \prod_{l=1}^{l(\beta)} p_{(\emptyset,(\beta_l))}[f]=p_{\alpha}[f]p_{\beta}[f^{\even}-f^{\odd}] .\end{equation}

In this paragraph, we show certain properties of this operation on bisymmetric functions. We start by showing the following result.

\begin{lemma}
    \label{lem:ind_S_j_B_k}
    We have
    \begin{align*}
        \ch( \Ind_{S_j \wr B_k}^{B_{jk}} V_\rho^{\boxtimes k} \otimes V_{\pi,\lambda}) &= s_\pi \left[ \sum_{ \substack{\lvert \alpha \rvert +  \lvert \beta \rvert =j \\ \lvert \beta \rvert \textnormal{ even}}} c_{\alpha, \beta}^\rho s_\alpha(X) s_\beta(Y) \right] s_\lambda \left[ \sum_{ \substack{\lvert \alpha \rvert +  \lvert \beta \rvert =j \\ \lvert \beta \rvert \textnormal{ odd}}} c_{\alpha, \beta}^\rho s_\alpha(X) s_\beta(Y) \right] \\
        &= (s_\pi \otimes s_\lambda)[s_\rho[X+Y]]
    \end{align*} 
where the action of $B_k$ on $S_j^k$ factors through $S_k$.
\end{lemma}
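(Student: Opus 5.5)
Write $G=S_j\wr B_k = S_j^k\rtimes B_k$, where $B_k$ acts on $S_j^k$ through the projection $B_k\to S_k$. The idea is to rewrite this group as a wreath product over $S_k$, so that \Cref{thm:Ind_repr_wr0} applies, and then to handle the remaining $\Z_2^k$-twist with \Cref{lem:ind_Z_2xS_n}.

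\textbf{Regrouping.} Regroup $G=(S_j\times\Z_2)^k\rtimes S_k=(S_j\times\Z_2)\wr S_k$. Here each $\Z_2$ factor of $\Z_2^k\subset B_k$ is paired with one copy of $S_j$, and inside $B_{jk}$ it acts as the central element of $B_j$ on the corresponding block. The representation $V_{\pi,\lambda}$ of $B_k$, with $|\pi|=l$, is $\Ind_{B_l\times B_{k-l}}^{B_k}V_\pi\boxtimes\tilde V_\lambda$. Therefore
\[
V_\rho^{\boxtimes k}\otimes V_{\pi,\lambda}=\Ind_{((S_j\times\Z_2)\wr S_l)\times((S_j\times\Z_2)\wr S_{k-l})}^{G}\bigl((V_\rho\boxtimes 1)^{\boxtimes l}\otimes\Specht_\pi\bigr)\boxtimes\bigl((V_\rho\boxtimes(-1))^{\boxtimes (k-l)}\otimes\Specht_\lambda\bigr).
\]
Checking this identification is the step I expect to need the most care: the trivial character $1^l$ and the sign character $(-1)^{k-l}$ of $\Z_2^k$ must distribute correctly onto the blocks. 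I would verify it by Mackey or by comparing characters directly, since both sides are induced from the same subgroup with the same data.

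\textbf{Inducing in stages.} Next, induce in stages through $B_{jl}\times B_{j(k-l)}$, using that $S_j\times\Z_2\subset B_j$ and that $(S_j\times\Z_2)\wr S_m\subset B_j\wr S_m\subset B_{jm}$. Set
\[
W^{\pm}=\Ind_{\Z_2\times S_j}^{B_j}(\pm1)\boxtimes V_\rho ,
\]
so that $W^+$ uses the trivial character and $W^-$ uses the sign character of $\Z_2$. Inducing first from $(S_j\times\Z_2)\wr S_m$ to $B_j\wr S_m$ yields $(W^{\pm})^{\boxtimes m}\otimes \Specht$; this uses the standard fact $\Ind_{H\wr S_m}^{K\wr S_m}U^{\boxtimes m}\otimes V=(\Ind_H^K U)^{\boxtimes m}\otimes V$. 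Then \Cref{thm:Ind_repr_wr0} together with multiplicativity (part (1) of the lemma on the type $B$ Frobenius characteristic) gives
\[
\ch=\ch(\Specht_\pi)[\ch W^+]\cdot\ch(\Specht_\lambda)[\ch W^-]=s_\pi[\ch W^+]\,s_\lambda[\ch W^-].
\]
By \Cref{lem:ind_Z_2xS_n}, $\ch W^+$ and $\ch W^-$ are exactly the sums over $|\beta|$ even and over $|\beta|$ odd appearing in the statement. This establishes the first equality.

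\textbf{Second equality.} By \eqref{plethysm-schur-product}, $s_\rho[X+Y]=\sum c^\rho_{\alpha\beta}s_\alpha(X)s_\beta(Y)$. Since $s_\beta(Y)$ is homogeneous of $Y$-degree $|\beta|$, the even part $(s_\rho[X+Y])^{\even}$ is the sum over $|\beta|$ even, and the odd part is the sum over $|\beta|$ odd. The definition $(f_1\otimes f_2)[g]=f_1[g^\even]f_2[g^\odd]$ then gives the second equality immediately.
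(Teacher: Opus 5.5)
Your proposal is correct and follows essentially the same route as the paper. Both arguments split $V_{\pi,\lambda}$ as induced from $B_l\times B_{k-l}$, identify $S_j\wr B_m$ with $(\Z_2\times S_j)\wr S_m$, and induce in stages through $B_j\wr S_m$ to get $\bigl(\Ind_{\Z_2\times S_j}^{B_j}(\pm1)\boxtimes V_\rho\bigr)^{\boxtimes m}\otimes\Specht_\pi$ (resp.\ $\Specht_\lambda$); they then conclude with \Cref{thm:Ind_repr_wr0} and \Cref{lem:ind_Z_2xS_n}.

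The identification you flag as delicate is just the projection formula $U\otimes\Ind_H^G W\cong\Ind_H^G(\Res_H U\otimes W)$ applied to the inflation of $V_{\pi,\lambda}$, so no Mackey argument is needed. Your explicit derivation of the second equality from the $Y$-parity of $s_\rho[X+Y]$ fills in a step the paper leaves implicit.
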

\begin{proof}
    Let $l= \lvert \pi \rvert$ and notice that 
    \[ \Ind_{S_j \wr B_k}^{B_{jk}} V_\rho^{\boxtimes k} \otimes V_{\pi,\lambda} = \Ind_{S_j \wr B_l \times S_j \wr B_{k-l}}^{B_{jk}} V_\rho^{\boxtimes l} \otimes V_{\pi} \boxtimes V_\rho^{\boxtimes l} \otimes \tilde{V}_{\lambda} \]
    and so the characteristic is
    \[ \ch( \Ind_{S_j \wr B_k}^{B_{jk}} V_\rho^{\boxtimes k} \otimes V_{\pi,\lambda}) = \ch(\Ind_{S_j \wr B_l }^{B_{jl}} V_\rho^{\boxtimes l} \otimes V_{\pi}) \ch(\Ind_{S_j \wr B_{k-l}}^{B_{j(k-l)}}  V_\rho^{\boxtimes l} \otimes \tilde{V}_{\lambda}).\]
    We deal with the first factor, the second one is similar:
    \begin{align*}
    \Ind_{S_j \wr B_l }^{B_{jl}} V_\rho^{\boxtimes l} \otimes V_{\pi} 
    &=  \Ind_{B_j \wr S_l }^{B_{jl}} \Ind_{(\Z_2 \times S_j) \wr S_l}^{B_j \wr S_l} (1 \boxtimes V_\rho)^{\boxtimes l} \otimes V_{\pi} \\
    &= \Ind_{B_j \wr S_l }^{B_{jl}} \left( \Ind_{\Z_2 \times S_j}^{B_j } 1 \boxtimes V_\rho \right)^{\boxtimes l} \otimes V_{\pi}
    \end{align*}
    and using \Cref{thm:Ind_repr_wr0} and \Cref{lem:ind_Z_2xS_n} we obtain the claimed equality. 
\end{proof}

We can generalize Lemma \ref{lem:ind_S_j_B_k} by the following Theorem.

\begin{theorem}
\label{thm:Ind_repr_wr}
For every representation $V$ of $S_j$ and $W$ of $B_k$, we have
\begin{equation}\ch( \Ind_{S_j \wr B_k}^{B_{jk}} V^{\boxtimes k} \otimes W) = \ch(W)[\ch(V)[X+Y]]\end{equation} where the action of $B_k$ factorizes through $S_k$.
\end{theorem}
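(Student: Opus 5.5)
We reduce by bilinearity to the case already treated in \Cref{lem:ind_S_j_B_k}. Both sides of the claimed identity are linear in $W$: the left side because induction and $\ch$ are additive, and the right side because the plethysm $f[g]$ on $\Lambda^{(2)}$ was defined linearly in $f$. So it suffices to take $W=V_{\pi,\lambda}$ irreducible, using \Cref{representationsbn}.

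The dependence on $V$ is not linear on either side, so we cannot simply reduce to irreducible $V$. For $W=V_{\pi,\lambda}$ with $\lvert\pi\rvert=l$, we first use the decomposition from the proof of \Cref{lem:ind_S_j_B_k}. This is transitivity of induction through $S_j\wr B_l\times S_j\wr B_{k-l}$ together with part (1) of the Frobenius lemma. It reduces the claim to two identities:
\[
\ch(\Ind_{S_j\wr B_l}^{B_{jl}}V^{\boxtimes l}\otimes V_\pi)=s_\pi[\ch(V)[X+Y]^{\even}]
\]
and the analogous one with $\tilde V_\lambda$ and the odd part. Recall that $(s_\pi\otimes s_\lambda)[g]=s_\pi[g^\even]\,s_\lambda[g^\odd]$, and set $g=\ch(V)[X+Y]$.

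For the first factor we rewrite the induction as
\[
\Ind_{B_j\wr S_l}^{B_{jl}}\bigl(\Ind_{\Z_2\times S_j}^{B_j}1\boxtimes V\bigr)^{\boxtimes l}\otimes V_\pi.
\]
This is the same manipulation as in \Cref{lem:ind_S_j_B_k}, and it is valid for arbitrary (reducible) $V$, since it is just transitivity of induction and the identification of $(\Z_2\times S_j)\wr S_l$ inside $B_j\wr S_l$. We then apply \Cref{thm:Ind_repr_wr0} with the $B_j$-representation $\Ind_{\Z_2\times S_j}^{B_j}1\boxtimes V$; that theorem holds for every representation, not only irreducible ones. This gives $s_\pi[\ch(\Ind_{\Z_2\times S_j}^{B_j}1\boxtimes V)]$. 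By the reducible-case statement of \Cref{lem:ind_Z_2xS_n}, the inner characteristic is
\[
\tfrac12\bigl(\ch(V)[X+Y]+\ch(V)[X+\epsilon Y]\bigr)_{|\epsilon=-1}.
\]

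The remaining step, which we expect to be the main obstacle, is identifying this expression with $g^{\even}$. Substituting $\epsilon=-1$ flips the sign of every monomial of odd $Y$-degree, as in the computations $p_d[X+\epsilon Y]=p_d[X]+\epsilon^dp_d[Y]$. Hence $\ch(V)[X+\epsilon Y]_{|\epsilon=-1}=g^\even-g^\odd$, and the half-sum is $g^\even$. The same argument with $\tilde V_\lambda$, using the sign character $(-1)$ on the factor $\Z_2\simeq Z(B_j)$ and the second formula of \Cref{lem:ind_Z_2xS_n}, gives the half-difference, which is $g^\odd$. Multiplying the two factors yields $(s_\pi\otimes s_\lambda)[g]$, and linearity in $W$ completes the proof.

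One should also double-check that the $B_{k-l}$ factor carries $\tilde V_\lambda$ correctly through the wreath rewriting. This works because the $(-1)^{k-l}$ twist on the $\Z_2^{k-l}$ part corresponds to the central $\Z_2$ of each $B_j$ block acting by $-1$.
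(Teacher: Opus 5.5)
Your proof is correct, but it takes a different route from the paper.

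\textbf{The paper's route.} It works in the class-function (power-sum) basis for $W$:
\begin{itemize}
\item it writes the indicator $1_{C_{(\alpha,\beta)}}$ of a conjugacy class of $B_k$ as an induced product of indicators of single positive and negative cycles;
\item it uses that, for a single $d$-cycle, $\chi_V^{d}\otimes 1_{C}$ is additive in $V$, since the cycle acts transitively on the tensor factors;
\item it extracts the single-cycle identities $p_{((d),\emptyset)}[g]$ and $p_{(\emptyset,(d))}[g]$ from \Cref{lem:ind_S_j_B_k}, which is stated only for irreducible $V_\rho$;
\item it reassembles the general case, in parallel with the proof of \Cref{thm:Ind_repr_wr0}.
\end{itemize}

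\textbf{Your route.} You decompose $W$ in the irreducible basis $V_{\pi,\lambda}$ instead. You then observe that the proof of \Cref{lem:ind_S_j_B_k} never uses irreducibility of $V_\rho$. Via $S_j\wr B_l=(\Z_2\times S_j)\wr S_l$, the non-linearity in $V$ is pushed into two results that already hold for arbitrary representations: \Cref{thm:Ind_repr_wr0}, and the reducible case of \Cref{lem:ind_Z_2xS_n}. Your identification of $\tfrac12\bigl(\ch(V)[X+Y]\pm\ch(V)[X+\epsilon Y]\bigr)_{|\epsilon=-1}$ with $g^{\even}$, respectively $g^{\odd}$, is exactly the step the paper leaves implicit when it equates the two lines of \Cref{lem:ind_S_j_B_k}.

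\textbf{What each approach buys.} Your route is shorter. It shows the theorem is a formal consequence of \Cref{thm:Ind_repr_wr0} and \Cref{lem:ind_Z_2xS_n}, with no second round of class-function bookkeeping. The paper's route checks the identity cycle by cycle against the power-sum description $p_{(\alpha,\beta)}[g]=p_\alpha[g]\,p_\beta[g^{\even}-g^{\odd}]$ of the type $B$ plethysm. That is the form in which the operation is used in the proof of \Cref{compositional-formulatypeB}.
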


\begin{proof}

Consider a $2$-partition $(\alpha,\beta) \in \mathcal{P}_n^{(2)}$ and the associated conjugacy class $C_{(\alpha,\beta)}$ of $B_n$. Assume that $\alpha=(\alpha_1,\dots,\alpha_s)$ and $\beta=(\beta_1,\dots,\beta_t)$. Recall that we have $\ch(1_{C_{(\alpha,\beta)}})=\dfrac{p_{(\alpha,\beta)}}{z_{\alpha,\beta}}$.

Since in the wreath product $S_j \wr B_d$ the action of $B_d$ factorizes through that of $S_d$, we see as before that, for any $W_1,W_2$ representations of $S_j$ we have the following equalities of class functions $$(\chi_{W_1} \oplus \chi_{W_2})^{d} \otimes 1_{C_{((d),\emptyset)}}=\chi_{W_1}^d \otimes 1_{C_{((d),\emptyset)}} \oplus \chi_{W_2}^d \otimes 1_{C_{((d),\emptyset)}} $$ and  $$(\chi_{W_1} \oplus \chi_{W_2})^{d} \otimes 1_{C_{(\emptyset,(d))}}=\chi_{W_1}^d \otimes 1_{C_{(\emptyset,(d))}} \oplus \chi_{W_2}^d \otimes 1_{C_{(\emptyset,(d))}} .$$

From \Cref{lem:ind_S_j_B_k}, we see that for any representation $W$ of $S_j$ and any $d$, we have $$\ch( \Ind_{S_j \wr S_d}^{B_{jd}} \chi_W^{k} \otimes z_{((d),\emptyset)}1_{C_{(0,(d))}})=p_{(d,\emptyset)}[\ch(W)[X+Y]] $$ and  $$\ch( \Ind_{S_j \wr B_d}^{B_{jd}} \chi_W^{k} \otimes z_{(\emptyset,(d))}1_{C_{(\emptyset,(d))}})=p_{(\emptyset,(d))}[\ch(W)[X+Y]] .$$

Put $$z'_{(\alpha,\beta)}\coloneqq \dfrac{z_{((\alpha_1),\emptyset)} \cdots z_{((\alpha_s),\emptyset)} z_{(\emptyset,(\beta_1))} \cdots z_{(\emptyset,(\beta_t))}}{z_{(\alpha,\beta)}}.$$ Notice that we have \begin{align*}&1_{C_{(\alpha,\beta)}}= z'_{(\alpha,\beta)}\Ind_{B_{\alpha_1} \times \cdots \times B_{\alpha_s} \times \cdots B_{\beta_1} \times \cdots \times B_{\beta_t}}^{B_k}(1_{C_{((\alpha_1),\emptyset)}} \times \cdots \times 1_{C_{((\alpha_s),\emptyset)}} \times 1_{C_{(\emptyset,(\beta_1))}} \times \cdots \times 1_{C_{(\emptyset,(\beta_t))}}) .\end{align*}

We have therefore that \begin{align}& \Ind_{S_j \wr B_k}^{B_{jk}} \chi_V^{ k} \otimes 1_{C_{(\alpha,\beta)}}=\nonumber \\ &z'_{(\alpha,\beta)} \Ind^{S_j \wr b_k}_{(S_j \wr B_{\alpha_1}) \times \cdots (S_j \wr \times B_{\alpha_s}) \times (S_j \wr B_{\beta_1}) \times \cdots (S_j \wr B_{\beta_t})}((\chi_V^{\alpha_1} \otimes 1_{C_{((\alpha_1),\emptyset)}}) \boxtimes \cdots \boxtimes (\chi_V^{\beta_t} \otimes 1_{C_{(\emptyset,(\beta_t))}}))\label{eq-proof-composition-plethysm}\end{align}

Put $\overline{B}_{jk}=B_{j\alpha_1} \times \cdots B_{j \alpha_k} \times B_{j\beta_1} \times \cdots \times B_{j \beta_t}$. We have thus that the RHS of eq.(\ref{eq-proof-composition-plethysm}) is equal to 
\begin{align*} 
&=z'_{(\alpha,\beta)} \Ind^{B_{jk}}_{\overline{B}_{jk}}\Ind^{\overline{B}_{jk}}_{\prod_{i=1}^s (S_j \wr B_{\alpha_i}) \prod_{l=1}^t (S_j \wr \beta_l)}(\boxtimes_{i=1}^r (\chi_V^{\alpha_i} \otimes 1_{C_{((\alpha_i),\emptyset)}}) \boxtimes_{l=1}^t \chi_V^{\beta_l} \otimes 1_{C_{(\emptyset,(\beta_l))}})=\end{align*} and taking characters we have thus \begin{align*}&\dfrac{p_{(\alpha,\beta)}}{z_{(\alpha,\beta)}}[\ch(V)[X+Y]]=\prod_{i=1}^s \ch(\Ind^{B_{j \alpha_i}}_{S_j \wr B_{\alpha_i}}(\chi_V^{\alpha_i} \otimes 1_{C_{((\alpha_i),\emptyset)}})) \prod_{l=1}^t\ch(\Ind^{B_{j \beta_l}}_{S_j \wr B_{\beta_l}}(\chi_V^{\beta_l} \otimes 1_{C_{(\emptyset,(\beta_l))}})). \qedhere \end{align*}
\end{proof}

We end the section by showing the following technical Lemma.

\begin{lemma}
\label{plethysmsn-bn}
For any two symmetric functions $f,g \in \Lambda(X)$, we have that \begin{equation}
\label{plethysmsn-bn-eq}
    f[g][X+Y]=f[X+Y][g[X+Y]]
\end{equation}    
\end{lemma}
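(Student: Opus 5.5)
The plan is to reduce both sides to the power-sum basis. By linearity in $f$, it then suffices to treat $f=p_\lambda$. The left-hand side is linear in $f$ for fixed $g$, since both $f\mapsto f[g]$ and evaluation at $X+Y$ are linear. The right-hand side is also linear in $f$, because $f\mapsto f[X+Y]$ is linear and the bisymmetric plethysm $F\mapsto F[G]$ was defined by extending linearly in $F$. So write $f=\sum_\lambda c_\lambda p_\lambda$ and prove $p_\lambda[g][X+Y]=p_\lambda[X+Y][g[X+Y]]$ for each partition $\lambda$.

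First I would identify the right-hand side. Since $p_d[X+Y]=p_d[X]+p_d[Y]=p_{((d),\emptyset)}$, multiplicativity gives $p_\lambda[X+Y]=p_{(\lambda,\emptyset)}$. The formula proved just before \Cref{lem:ind_S_j_B_k} says $p_{(\alpha,\beta)}[h]=p_\alpha[h]\,p_\beta[h^{\even}-h^{\odd}]$. With $\beta=\emptyset$ this becomes $p_{(\lambda,\emptyset)}[h]=p_\lambda[h]=\prod_i \psi_{\lambda_i}(h)$, where $\psi_d$ are the Adams operations of the $\lambda$-ring $\Lambda^{(2)}$. Hence the right-hand side equals $\prod_i \psi_{\lambda_i}(g[X+Y])$, computed in the ordinary $\lambda$-ring structure of $\Lambda^{(2)}$. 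The even/odd splitting disappears because only the $p_{((d),\emptyset)}$ operations occur.

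Next I would handle the left-hand side via the evaluation map $\varphi\colon\Lambda(X)\to\Lambda^{(2)}$, $h\mapsto h[X+Y]$. It is a ring homomorphism by the defining property of plethysm. I would then check that it is a $\lambda$-ring homomorphism, i.e.\ $\varphi(\psi_d h)=\psi_d(\varphi(h))$. It suffices to check this on the generators $p_n$, where both sides equal $p_{nd}[X]+p_{nd}[Y]$, since $\psi_d$ on $\Lambda^{(2)}$ raises all $x_i,y_j$ to the $d$-th power. Given this, $p_\lambda[g][X+Y]=\varphi\bigl(\prod_i\psi_{\lambda_i}(g)\bigr)=\prod_i\psi_{\lambda_i}(\varphi(g))$, which matches the right-hand side.

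The only delicate point is the bookkeeping between the two kinds of plethysm on $\Lambda^{(2)}$. One must make sure that $p_\lambda[X+Y]$, viewed as an element of $\Lambda(X)\otimes\Lambda(Y)$ and then used as the outer function of the new bisymmetric plethysm, really reduces to the classical Adams operations. This is exactly the computation $p_{((d),\emptyset)}[h]=p_d[h^{\even}]+p_d[h^{\odd}]=p_d[h]$ already carried out in the text, so I expect no real obstacle beyond citing it carefully.
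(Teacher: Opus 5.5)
Your proof is correct and follows essentially the same route as the paper. You reduce to $f=p_\lambda$ by linearity, rewrite the right-hand side via $p_{(\lambda,\emptyset)}[h]=\prod_i p_{\lambda_i}[h]$, and compare with the left-hand side. The only cosmetic difference is that the paper expands $g=\sum_\mu g_\mu p_\mu$ and checks $p_{\lambda_i\mu}[X+Y]=p_{(\lambda_i\mu,\emptyset)}$ term by term, which is exactly your observation that $h\mapsto h[X+Y]$ commutes with the Adams operations.
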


\begin{proof}
From the linearity of eq.(\ref{plethysmsn-bn-eq}) in $f$, it is enough to show it for $f=p_{\lambda}$ with $\lambda \in \mathcal{P}$. Put $g=\sum_{\mu \in \mathcal{P}}g_{\mu}p_{\mu}$. We have thus \begin{align*}
    &p_{\lambda}\left[\sum_{\mu \in \mathcal{P}} g_{\mu}p_{\mu}\right][X+Y]=\prod_{i=1}^{l(\lambda)}p_{\lambda_i}\left[\sum_{\mu \in \mathcal{P}} g_{\mu}p_{\mu}\right][X+Y]=\\ &\prod_{i=1}^{l(\lambda)}\left(\sum_{\mu \in \mathcal{P}} g_{\mu}p_{\lambda_i\mu}\right)[X+Y]=\\ &\prod_{i=1}^{l(\lambda)}\left(\sum_{\mu \in \mathcal{P}} g_{\mu}p_{(\lambda_i\mu,\emptyset)}\right).
\end{align*}     

We have moreover $p_{\lambda}[X+Y]=p_{(\lambda,\emptyset)}$ and $\displaystyle g[X+Y]=\sum_{\mu \in \mathcal{P}}g_{\mu}p_{(\mu,\emptyset)}$ and thus $$
   p_{\lambda}[X+Y]\left[\sum_{\mu \in \mathcal{P}}g_{\mu}p_{(\mu,\emptyset)}\right]=\prod_{i=1}^{l(\lambda)}p_{((\lambda_i),\emptyset)}\left[\sum_{\mu \in \mathcal{P}}g_{\mu}p_{(\mu,\emptyset)}\right]=\prod_{i=1}^{l(\lambda)}\left(\sum_{\mu \in \mathcal{P}} g_{\mu}p_{(\lambda_i\mu,\emptyset)}\right) 
$$
We deduce eq.(\ref{plethysmsn-bn-eq}).
\end{proof}

\section{Compositional formulas}
\label{compositional-formulas-paragraph}

In this section, we will focus on the following. Put  $\Lambda[x]=\Lambda(X) \otimes_{\mathbb{Q}}\mathbb{Q}[x]$. As remarked in  Section \ref{paragraph-plethysm-series}, this ring has a natural $\lambda$-ring structure. Assume now to have two formal power series $$H(t)=\sum_{n \geq 1} H_n t^n $$ and $$G(t)=\sum_{n \geq 1} G_n t^n $$ with $H_n,G_n \in \Lambda_n$ for each $n \geq 1$. We define the composition as follows $$H(G(t))\coloneqq\sum_{n \geq 1} H_n[G(t)] .$$

We want to give an explicit description of the coefficients of this composition, which is the content of Theorem \ref{compositionalformulaA} below. In order to do this, we will need the following preliminaries.
 
 Given $k \in \mathbb{N}_{\geq 1}$, let $\Lambda(k) \coloneqq \Lambda(X) \otimes_{\Q} \cdots \otimes_{\Q} \Lambda(X)$. Notice that in this case we have a multiplication map $m_k\colon \Lambda(k) \to \Lambda$.
 
 We define the $k$-plethysm $f[g]$ of two elements $f,g \in \Lambda(k)$ as follows. If $\displaystyle f=\sum_{\lambda=(\lambda^1,\dots,\lambda^k) \in \mathcal{P}^k}p_{\lambda^1}(X)\otimes \cdots \otimes p_{\lambda^k}(X)f_{\lambda}$, we define $$f[g] \coloneqq \sum_{\lambda=(\lambda^1,\dots,\lambda^k) \in \mathcal{P}^k}f_{\lambda} \psi_{\lambda^1} \otimes \cdots \otimes \psi_{\lambda^k}(g) .$$

From this definition, we see that if $f=f_1 \otimes \cdots \otimes f_k$ and $g=g_1 \otimes \cdots \otimes g_k$, we have $$f[g]=f_1[g_1] \otimes \cdots \otimes f_k[g_k] .$$

In particular, we have that $$\ch \Ind_{S_{j_1} \wr S_{m_1} \times \cdots \times S_{j_k} \wr S_{m_k}}^{S_{n_1} \times \cdots \times S_{n_k}}((W_1^{\boxtimes m_1} \otimes V_1) \boxtimes \cdots \boxtimes (W_k^{\boxtimes m_k} \otimes V_k))=$$ $$=\ch(V_1 \boxtimes \cdots \boxtimes V_k)[\ch(W_1 \boxtimes \cdots \boxtimes W_k)] .$$

By linearity, we deduce that, for any representation $V$ of $S_{m_1} \times \cdots \times S_{m_k}$, we have that \begin{equation}
    \ch \Ind_{S_{j_1} \wr S_{m_1} \times \cdots \times S_{j_k} \wr S_{m_k}}^{S_{n_1} \times \cdots \times S_{n_k}}((W_1^{\boxtimes m_1}  \boxtimes \cdots \boxtimes W_k^{\boxtimes m_k}) \otimes V)=\ch(V)[\ch(W_1) \otimes \cdots \otimes \ch(W_k)]\label{kplethysm}
\end{equation}

Notice now that for any integer $n \geq 1$ and $n_1,\dots,n_k$ such that $n_1+\cdots +n_k=n$, we have the restriction map $\Res_{S_{n_1} \times \cdots S_{n_k}}^{S_n}$, which induces thus a restriction map $\Res_{\Lambda(k)}^{\Lambda_n}\colon \Lambda_n \to \Lambda_{n_1} \otimes \cdots \otimes \Lambda_{n_k}$. For any $\mu \in \mathcal{P}_n$, we have that \begin{equation}\label{restriction}\Res_{\Lambda(k)}^{\Lambda_n}(p_{\mu}z_{\mu}^{-1})=\sum_{\substack{(\mu^1,\dots,\mu^k) \in \mathcal{P}_{n_1} \times \cdots \times \mathcal{P}_{n_k} \\ \mu^1 \cup \cdots \cup \mu^k=\mu }}p_{\mu^1}z_{\mu^1}^{-1} \otimes \cdots \otimes p_{\mu^k}z_{\mu^k}^{-1} .\end{equation}

Notice that we have $\sum_{j \in \supp(\lambda)} m_{j,\lambda}=l(\lambda)$ and thus a restriction morphism $\Res_{\Lambda(|\supp(\lambda)|)}^{\Lambda_{l(\lambda)}}$.

\begin{theorem}
\label{compositionalformulaA}
The coefficient of $t^n$ in the power series $H(G(t))$ is given as follows.

\begin{equation}
    \sum_{\lambda \in \mathcal{P}_n}m_{l(\lambda)}\left(\Res_{\Lambda(|\supp(\lambda)|)}^{\Lambda_{l(\lambda)}}(H_{l(\lambda)})[\prod_{j \in \supp(\lambda)} G_j]\right).
\end{equation}
\end{theorem}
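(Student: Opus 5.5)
The plan is to reduce the statement to a multinomial expansion of a plethysm $f[a_1+\cdots+a_r]$ and then read off the coefficient of $t^n$. Write $G(t)=\sum_{j\geq 1} a_j$ with $a_j:=G_j t^j$. For the coefficient of $t^n$ only the finitely many $j\leq n$ and $H_m$ with $m\leq n$ contribute, since $H_m[G(t)]\in t^m\Lambda[[t]]$. So we may truncate and treat $G(t)$ as a finite sum $a_1+\dots+a_r$ with $r=n$.

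\textbf{Step 1 (key identity).} For $f\in\Lambda_m$ and $a_1,\dots,a_r$ in a $\lambda$-ring $R$, we claim
\[ f[a_1+\cdots+a_r]=\sum_{m_1+\cdots+m_r=m} m_r\Big(\Res^{\Lambda_m}_{\Lambda_{m_1}\otimes\cdots\otimes\Lambda_{m_r}}(f)\,[a_1\otimes\cdots\otimes a_r]\Big), \]
where the $r$-plethysm and $m_r$ are as defined before the statement, with values in $R$ instead of $\Lambda$. By linearity it suffices to take $f=p_\mu/z_\mu$ with $\mu\vdash m$. Since $p_d[-]=\psi_d$ is additive, we get
\[ p_\mu[\textstyle\sum_j a_j]=\prod_i\sum_j\psi_{\mu_i}(a_j). \]
Expanding the product amounts to choosing, for each part $\mu_i$, an index $j$. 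Such a choice determines a splitting $\mu=\mu^1\cup\cdots\cup\mu^r$, and its term equals $\prod_j\psi_{\mu^j}(a_j)$.

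The number of choices giving a fixed splitting $(\mu^1,\dots,\mu^r)$ is $\prod_k m_k(\mu)!/\prod_{j,k}m_k(\mu^j)!$. This ratio equals $z_\mu/\prod_j z_{\mu^j}$, because the factors $k^{m_k}$ in the $z$'s cancel. Dividing by $z_\mu$ therefore gives
\[ \frac{p_\mu}{z_\mu}[\textstyle\sum_j a_j]=\sum_{\mu^1\cup\cdots\cup\mu^r=\mu}\prod_j\frac{\psi_{\mu^j}(a_j)}{z_{\mu^j}}. \]
By eq.~\eqref{restriction} and the definition of the $r$-plethysm, the right-hand side is exactly $m_r\big(\Res(p_\mu z_\mu^{-1})[a_1\otimes\cdots\otimes a_r]\big)$, summed over the possible sizes $(m_1,\dots,m_r)$. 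This counting is the only nonformal point of the argument, and I expect it to be the main step to check carefully.

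\textbf{Step 2 (tracking $t$).} Since $\psi_d(G_jt^j)=\psi_d(G_j)t^{jd}$, a term with multiplicities $(m_1,\dots,m_r)$ is homogeneous in $t$ of degree $\sum_j jm_j$. Encode the data $(m_j)_j$ as the partition $\lambda$ with $m_{j,\lambda}=m_j$. Then $l(\lambda)=\sum_j m_j=m$ and $|\lambda|=\sum_j jm_j$.

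Indices $j$ with $m_j=0$ contribute the factor $\Lambda_0=\Q$ with trivial plethysm, so they can be dropped. The restriction then only involves the slots indexed by $j\in\supp(\lambda)$, which is $\Res^{\Lambda_{l(\lambda)}}_{\Lambda(|\supp(\lambda)|)}$. The tensor of the $G_j$ over $j\in\supp(\lambda)$ is the element written $\prod_{j\in\supp(\lambda)}G_j$ in the statement.

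\textbf{Step 3 (summing).} Sum over $m\geq 1$ of $H_m[G(t)]$ and collect the coefficient of $t^n$. The pairs $(m,(m_j)_j)$ with $\sum_j jm_j=n$ are in bijection with partitions $\lambda\vdash n$, with $m=l(\lambda)$. This yields precisely the formula of Theorem~\ref{compositionalformulaA}, after setting $t=1$ in each summand once the degree has been fixed.
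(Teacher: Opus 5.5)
Your proposal is correct and follows essentially the paper's argument. The paper also expands $p_\mu[G(t)]$ using additivity of the Adams operations, groups the terms by the induced splitting $\mu=\mu^1\cup\cdots\cup\mu^k$ (its map $\psi$), counts each fibre as $z_\mu/\prod_j z_{\mu^j}$, and concludes with eq.~\eqref{restriction}; the only difference is that you isolate the identity for $f[a_1+\cdots+a_r]$ as a standalone lemma before tracking the $t$-degree, whereas the paper indexes everything directly by the set $A_n$ and its map to $\mathcal{P}_n$.
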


\begin{proof}
For each $m \in \mathbb{N}_{\geq 1}$, write $H_m=\displaystyle \sum_{\mu \in  \mathcal{P}_m}H_{\mu}z_{\mu}^{-1}p_{\mu}$. If $\mu=(\mu_1,\dots,\mu_{l(\mu)})$, we have $$p_{\mu}[G(t)]=\prod_{i=1}^{l(\mu)}\left(\sum_{a_i \geq 1}p_{\mu_i}[G_{a_i}]t^{a_i\mu_i}\right) .$$

Let $A_n=\{(\mu,a=(a_1,\dots,a_{l(\mu)}) \in \mathcal{P} \times \mathbb{N}^{l(\mu)} \ | \ \sum_i \mu_i a_i=n\}$. The coefficient of $t^n$ in $H(G(t))$ is thus given by $$\sum_{(\mu,a) \in A_n}H_{\mu}z_{\mu}^{-1}\prod_{i=1}^{l(\mu)}p_{\mu_i}[G_{a_i}] .$$

We have a map $f\colon A_n \to \mathcal{P}_n$ defined as follows. The element $f((\mu,a))$ is the partition $\lambda \in \mathcal{P}_n$ such that $\supp(\lambda)=\{a_1,\dots,a_{l(\mu)}\}$ and $m_{\lambda,a_j}=\displaystyle\sum_{\substack{h \text{ s.t.}\\ a_h=a_j}}\mu_h$. Notice that $l(f(\mu,a))=|\mu|$.

Fix now $\lambda \in \mathcal{P}_n$. We compute now $$\sum_{\substack{(\mu,a) \in A_n\\ f(\mu,a)=\lambda}}H_{\mu}z_{\mu}^{-1}\prod_{i=1}^{l(\mu)}p_{\mu_i}[G_{a_i}] .$$
 Let $g \colon \{(\mu,a) \in A_n \ | \ f(\mu,a)=\lambda\} \to \mathcal{P}_{l(\lambda)}$ be the function $g(\mu,a)=\mu$. Put $k=m(\lambda)$, let $\supp(\lambda)=(b_1,\dots,b_k)$. For any $\mu \in \mathcal{P}_{l(\lambda)}$, we defined the following map $$\psi \colon g^{-1}(\mu) \to \{(\mu^1,\dots,\mu^k) \in \mathcal{P}_{m_{\lambda,b_1}} \times \cdots \times \mathcal{P}_{m_{\lambda,b_k}} \ | \ \mu^1 \cup \cdots  \cup \mu^k=\mu\} $$  as follows. Given an element $(\mu,a) \in g^{-1}(\mu)$ we associate to it the multipartition $\psi(\mu,a)=(\psi(\mu,a)^1,\dots,
\psi(\mu,a)^k)$ where $\psi(\mu,a)^j$ is the partition whose entries are given by the integers $\mu_h$ with $a_h=b_j$. Notice that, if $\psi(\mu,a)=(\mu^1,\dots,\mu^k)$, we have $$\prod_{i=1}^{l(\mu)}p_{\mu_i}[G_{a_i}]=\prod_{j=1}^k p_{\mu^j}[G_{b_j}] .$$

 In particular, we have that \begin{align}&\sum_{\substack{(\mu,a) \in A_n\\ f(\mu,a)=\lambda}}H_{\mu}z_{\mu}^{-1}\prod_{i=1}^{l(\mu)}p_{\mu_i}[G_{a_i}]= \nonumber \\ &=\sum_{\mu \in \mathcal{P}_{l(\lambda)}}H_{\mu}z_{\mu}^{-1} \sum_{\substack{(\mu^1,\dots,\mu^k) \in \mathcal{P}_{m_{\lambda,b_1}} \times \cdots \times \mathcal{P}_{m_{\lambda,b_k}} \\ \mu^1 \cup \cdots  \cup \mu^k=\mu}}|\psi^{-1}(\mu,a)| \prod_{j=1}^k p_{\mu^j}[G_{b_j}] .\label{rewriting1}\end{align}

Assume that $\supp(\mu)=\{c_1,\dots,c_t\}$. We have that \begin{equation}
    |\psi^{-1}(\mu^1,\dots,\mu^k)|=\prod_{i=1}^t \dfrac{(m_{\mu,c_i})!}{\prod_{j=1}^k (m_{\mu^j,c_i})! }=\dfrac{z_{\mu}}{z_{\mu^1} \cdots z_{\mu^k}}.
\end{equation}

From eq.(\ref{rewriting1}) and eq.(\ref{restriction}), we deduce $$\sum_{\substack{(\mu,a) \in A_n\\ f(\mu,a)=\lambda}}H_{\mu}z_{\mu}^{-1}\prod_{i=1}^{l(\mu)}p_{\mu_i}[G_{a_i}]=m_{l(\lambda)}\left(\Res_{\Lambda(|\supp(\lambda)|)}^{\Lambda_{l(\lambda)}}(H_{l(\lambda)})[\prod_{j \in \supp(\lambda)} G_j]\right)$$ and thus the Theorem.
\end{proof}

\begin{remark}
Assume that we have $H_n=H_{(1^n)}p_{(1^n)}/n!$ and $G_n=G_{(1^n)}p_{(1^n)}/n!$ for any $n$, and we put $\widetilde{H}_n=H_{(1^n)}$ and $\widetilde{G}_n=G_{(1^n)}$. For any $\lambda \in \mathcal{P}$, we have    \begin{align*}&\Res_{\Lambda(|\supp(\lambda)|)}^{\Lambda_{l(\lambda)}}(H_{l(\lambda)})[\prod_{j \in \supp(\lambda)} G_j]=\\ &\Res_{\Lambda(|\supp(\lambda)|)}^{\Lambda_{l(\lambda)}}\left(\dfrac{\widetilde{H}_{l(\lambda)}}{l(\lambda)!}p_{(1^{l(\lambda)})}\right)\Bigg[\prod_{j \in \supp(\lambda)}(\widetilde{G}_jp_{(1^j)}/j!)\Bigg]=\\ &\widetilde{H}_{l(\lambda)}\prod_{j \in \supp(\lambda)}\dfrac{\widetilde{G}^{m_{j,\lambda}}_j}{j!^{m_{j,\lambda}}}p_{(1^m)}
\end{align*}
where the last equality is a consequence of eq.(\ref{restriction}). The coefficient of degree $m$ of $H(G(t))$ is thus \begin{align*}
    &\sum_{\lambda \in \mathcal{P}_m}\widetilde{H}_{l(\lambda)}\prod_{j \in \supp(\lambda)}\dfrac{\widetilde{G}^{m_{j,\lambda}}_j}{m_{j,\lambda}!j!^{m_{j,\lambda}}}p_{(1^m)}=\dfrac{p_{(1^m)}}{m!}\sum_k \tilde{H}_{k}\sum_{{\substack{\sigma \in \Pi_m \\ l(\sigma)=k }}}\prod_{i=1}^k\widetilde{G}_{|\sigma_i|},
\end{align*}
where $\Pi_m$ is the set of set partitions of the set $\{1,\dots,m\}$. We recover thus the usual Faa di Bruno's formula for the composition of formal power series, see for instance \cite{stanley}*{Theorem 5.1.4}.
\end{remark}

\begin{remark}
\label{paragraph-equiv-bell}
Let $F(t) \in \Lambda[[t]]$ be such that $F_0=0$ and $F_1=p_1t$. Using Theorem \ref{compositionalformulaA}, it is possible to show that it exists a unique series $G(t)$ such that $F[G]=p_1t$, i.e. a plethystic inverse. An equivalent of Bell's inversion formula exists for such an inverse and can be found in multiple works, where the coefficients of $G$ are expressed in terms of the coefficients of $F$ through an iterative process indexed by rooted trees, see for instance \cite{Species},\cite{Yang}.    
\end{remark}

\subsection{Type \texorpdfstring{$B$}{B} compositional formula}

In this section we want to generalize Theorem \ref{compositionalformulaA} to the case of plethysm of bisymmetric functions. As remarked in  Section \ref{paragraph-plethysm-series}, the ring $\Lambda^{(2)}[[t]]$ has a natural $\lambda$-ring structure. Given  $G(t)=\sum_{n \geq 1}G_n t^n \in \Lambda^{(2)}(t)$, put $G^{\even}(t)=\sum_{n \geq 1} G_n^{\even}t^n$ and $G^{\odd}(t)=\sum_{n \geq 1}G_n^{\odd}t^n$. For $f_1 \otimes f_2 \in \Lambda^{(2)}$, we define $f_1 \otimes f_2[G(t)]=f_1[G^{\even}(t)]f_2[G^{\odd}(t)]$ and we extend it by linearity to define $f[G(t)]$ for every $f \in \Lambda^{(2)}$.

Assume  now to have two formal power series $$H(t)=\sum_{n \geq 1} H_n t^n $$ and $$G(t)=\sum_{n \geq 1} G_n t^n $$ with $H_n,G_n \in \Lambda^{(2)}_n$ for each $n \geq 1$. We define the composition as follows $$H(G(t))=\sum_{n \geq 1} H_n[G(t)] .$$ We want to describe the coefficients of this composition. As before, we will need the following preliminaries.

In what follows, for any $k \in \N$, we define $\Lambda^{(2)}(k)$ similarly as above. Notice that in this case we have  a multiplication map $m_k\colon \Lambda^{(2)}(k) \to \Lambda$. We define the $k$-plethysm $f[g]$ of two elements $f,g \in \Lambda^{(2)}(k)$ by analogy with what was done for the case of symmetric functions.

In a similar way to the case of symmetric functions above, we deduce that, for any representation $V$ of $B_{m_1} \times \cdots \times B_{m_k}$, we have that \begin{align*}
    &\ch \Ind_{B_{j_1} \wr S_{m_1} \times \cdots \times B_{j_k} \wr S_{m_k}}^{B_{n_1} \times \cdots \times B_{n_k}}((W_1^{\boxtimes m_1}  \boxtimes \cdots \boxtimes W_k^{\boxtimes m_k}) \otimes V)= \nonumber\\ &\ch(V)[\ch(W_1)[X+Y] \otimes \cdots \otimes \ch(W_k)[X+Y]] 
\end{align*}

Notice now that for any integer $n \geq 1$ and $n_1,\dots,n_k$ such that $n_1+\cdots +n_k=n$, we have the restriction map $\Res_{B_{n_1} \times \cdots B_{n_k}}^{B_n}$, which induces thus a restriction map $\Res_{\Lambda^{(2)}(k)}^{\Lambda^{(2)}_n}\colon \Lambda^{(2)}_n \to \Lambda^{(2)}_{n_1} \otimes \cdots \otimes \Lambda^{(2)}_{n_k} \subseteq \Lambda^{(2)}(k)$. 

For any $\mu=(\mu^{(0)},\mu^{(1)}) \in \mathcal{P}_n$, we have that \begin{equation*}
\Res_{\Lambda^{(2)}(k)}^{\Lambda^{(2)}_n}(p_{\mu}z_{\mu}^{-1})=\sum_{\substack{(\mu^1,\dots,\mu^k) \in \mathcal{P}^{(2)}_{n_1} \times \cdots \times \mathcal{P}^{(2)}_{n_k} \\ \mu^1 \cup \cdots \cup \mu^k=\mu }}p_{\mu^1}z_{\mu^1}^{-1} \otimes \cdots \otimes p_{\mu^k}z_{\mu^k}^{-1} .
\end{equation*}

Notice that similarly as above, for any $\lambda \in \mathcal{P}$, we have a restriction morphism $\Res_{\Lambda^{(2)}(m(\lambda))}^{\Lambda^{(2)}_{l(\lambda)}}$. 

We have the following compositional formula.

\begin{theorem}
\label{compositional-formulatypeB}
The coefficient of $t^n$ in the composition $H(G(t))$ is:
\begin{equation}
    \sum_{\lambda \in \mathcal{P}_n}m_{l(\lambda)}\left(\Res^{\Lambda^{(2)}_{l(\lambda)}}_{\Lambda^{(2)}(|\supp(\lambda)|)}(H_{l(\lambda)})[\prod_{j \in \supp(\lambda)}G_j]\right).
\end{equation}
\end{theorem}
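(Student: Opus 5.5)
The plan is to mimic the proof of Theorem \ref{compositionalformulaA} word for word, replacing the power-sum basis of $\Lambda$ by the bipartition basis $p_{(\alpha,\beta)}$ of $\Lambda^{(2)}$. For each $m\geq 1$ we expand
\[ H_m=\sum_{\mu=(\mu^{(0)},\mu^{(1)})\in\mathcal{P}^{(2)}_m} H_\mu z_\mu^{-1} p_\mu . \]
By the rule for $p_{(\alpha,\beta)}[f]$ established before Lemma \ref{lem:ind_S_j_B_k}, extended to series, we get
\[ p_\mu[G(t)]=\prod_i p_{\mu^{(0)}_i}[G(t)]\prod_l p_{\mu^{(1)}_l}[G^{\even}(t)-G^{\odd}(t)] . \]
Both factors are built from the linear operators $p_{((d),\emptyset)}[-]$ and $p_{(\emptyset,(d))}[-]$, so each factor splits as a sum over the degree $a$ of the chosen coefficient $G_a$: the factor $p_{((d),\emptyset)}[G_a]t^{ad}$ or $p_{(\emptyset,(d))}[G_a]t^{ad}$. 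Hence the coefficient of $t^n$ is a sum over the set $A_n^{(2)}$ of pairs $(\mu,a)$, where $\mu$ is a bipartition and $a$ assigns a positive integer to each part of $\mu^{(0)}$ and of $\mu^{(1)}$, with $\sum \mu_i a_i=n$.

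Next we group the terms by the image under the analogue of the map $f$. We set $f(\mu,a)=\lambda\in\mathcal{P}_n$, with $\supp(\lambda)$ the set of values of $a$, and $m_{\lambda,b}$ equal to the total size of the parts (in both components) labelled by $b$. Then $l(\lambda)=|\mu|$. For fixed $\lambda$ with $\supp(\lambda)=\{b_1,\dots,b_k\}$ and fixed $\mu\in\mathcal{P}^{(2)}_{l(\lambda)}$, the labellings $a$ correspond to tuples of bipartitions $(\mu^1,\dots,\mu^k)$ with $\mu^j\in\mathcal{P}^{(2)}_{m_{\lambda,b_j}}$ and $\mu^1\cup\cdots\cup\mu^k=\mu$, taking unions componentwise. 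Each such tuple contributes $\prod_j p_{\mu^j}[G_{b_j}]$.

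The counting step is to show that the fibre over such a tuple has cardinality $z_\mu/(z_{\mu^1}\cdots z_{\mu^k})$. With $z_{(\alpha,\beta)}=2^{l(\alpha)+l(\beta)}z_\alpha z_\beta$, the powers of $2$ cancel because $l$ is additive under union. What remains is the product of multinomial coefficients $\prod_c m_{\alpha,c}!/\prod_j m_{\alpha^j,c}!$, and the same for $\beta$. Parts of $\mu^{(0)}$ and of $\mu^{(1)}$ are distributed independently, so this is exactly the number of labellings. Then we use the type $B$ restriction formula for $\Res(p_\mu z_\mu^{-1})$ stated above. Together with the definition of the $k$-plethysm on $\Lambda^{(2)}(k)$, which acts factorwise via $p_{\mu^j}[G_{b_j}]$, this identifies the grouped sum with
\[ m_{l(\lambda)}\Bigl(\Res(H_{l(\lambda)})\bigl[\textstyle\prod_j G_{b_j}\bigr]\Bigr) . \]
Summing over $\lambda\in\mathcal{P}_n$ gives the theorem.

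The main obstacle is making sure the $k$-plethysm on $\Lambda^{(2)}(k)$ is defined so that $(p_{\mu^1}\otimes\cdots\otimes p_{\mu^k})[G_{b_1}\otimes\cdots\otimes G_{b_k}]=\prod_j p_{\mu^j}[G_{b_j}]$. In particular the $\mu^{(1)}$-parts must act through $G^{\even}-G^{\odd}$, consistently with the type $B$ plethysm. The paper only says this is defined by analogy, so I would state this convention explicitly. I would also check that even/odd parts behave well under the homogeneous splitting of $G(t)$, which holds because $(G^{\even})_a=(G_a)^{\even}$. The rest is bookkeeping identical to type $A$.
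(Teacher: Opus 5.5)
Your proposal is correct and follows the same route as the paper. You expand $H_m$ in the basis $p_\mu z_\mu^{-1}$ with $\mu\in\mathcal{P}^{(2)}_m$, split each factor $p_{((d),\emptyset)}[G(t)]$ and $p_{(\emptyset,(d))}[G(t)]$ by degree to index the terms by $A_n^{(2)}$, group via the map $f\colon A_n^{(2)}\to\mathcal{P}_n$, and conclude as in Theorem \ref{compositionalformulaA}. The paper only declares this last part ``totally analogous'' to type $A$. Your explicit fibre count $z_\mu/(z_{\mu^1}\cdots z_{\mu^k})$ (the powers of $2$ cancel because $l$ is additive under union) and your stated convention for the $k$-plethysm on $\Lambda^{(2)}(k)$ are exactly the details it leaves out.
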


\begin{proof}
For each $m \geq 1$, write $H_m=\displaystyle\sum_{\mu \in \mathcal{P}_m^{(2)}}H_{\mu}z_{\mu}^{-1}p_{\mu}$. we have $$p_{\mu}[G(t)]=\prod_{i=1}^{l(\mu_i^{(0)})}\left(\sum_{a^{(0)}_i \geq 1}p_{((\mu^{(0)}_i),\emptyset)}[G_{a_i^{(0)}}]t^{\mu_i^{(0)}a_i^{(0)}}\right)\prod_{j=1}^{l(\mu_i^{(1)})}\left(\sum_{a^{(1)}_j \geq 1}p_{(\emptyset,(\mu^{(1)}_j))}[G_{a_j^{(1)}}]t^{\mu_j^{(1)}a_j^{(1)}}\right) .$$

Let $A_n^{(2)}=\{(\mu,a=(a^{(0)}_1,\dots,a^{(0)}_{l(\mu^{(0)})},a^{(1)}_1,\dots,a^{(1)}_{l(\mu^{(1)})}) \in \mathcal{P}^{(2)} \times \N^{l(\mu)} \ | \ \sum_{j,\epsilon} \mu^{(\epsilon)}_j a_j^{(\epsilon)}=n\}$. We have thus that the coefficient of $t^n$ in $H[G(t)]$ is \begin{equation}
  \sum_{(\mu,a) \in A^{(2)}_n}H_{\mu}z_{\mu^{-1}}\prod_{i=1}^{l(\mu_i^{(0)})}p_{((\mu^{(0)}_i),\emptyset)}[G_{a_i^{(0)}}]\prod_{j=1}^{l(\mu_i^{(1)})}p_{(\emptyset,(\mu^{(1)}_j))}[G_{a_j^{(1)}}] .
\end{equation}
We have a map $f\colon A^{(2)}_n \to \mathcal{P}_n$ as follows. Given $(\mu,a) \in A_n^{(2)}$, let $\{a^{(0)}_1,\dots,a^{(0)}_{l(\mu^{(0)})},a^{(1)}_1,\dots,a^{(1)}_{l(\mu^{(1)})}\}=\{b_1,\dots,b_k\}$ and the partition $\lambda=f(\mu,a)$ is given as follows. The partition $\lambda$ is the partition with support $\{b_1,\dots,b_k\}$ and where $\displaystyle m_{\lambda,b_j}=\sum_{\substack{h,\epsilon  \text{ s. t }\\ a^{\epsilon}_h=b_j}}\mu_h^{(\epsilon)}$.
The proof is now totally analogous to that of Theorem \ref{compositionalformulaA}.
\end{proof}

\section{Equivariant incidence algebras and type \texorpdfstring{$A,B$}{A,B}  hyperplane arrangements}
\label{sec:typeAB}

Given a topological space $X$ with an action of a finite group $W$ on it, each cohomology group $H^i(X,\Q),H^i_c(X,\Q)$ carries a natural action of $H$ on it. We define thus  $$P_{X}^W(x) \coloneqq \sum_{i \geq 0} H^i(X,\Q) x^i \in \Vrep(W)[x] $$ $$P_{X,c}^W(x) \coloneqq \sum_{i \geq 0} H_c^i(X,\Q) x^i \in \Vrep(W)[x] $$

\subsection{The case of the partition lattice}

We apply the results of Section \ref{chapter-incidence-algebra} to the case of the partition lattices. Fix $n \in \mathbb{N}$ and let $\mathcal{L}=\Pi_n$ be the partition lattice of $n$, i.e. the elements of $\Pi_n$ are partitions of $\{1,\dots,n\}$ with $A \leq B \in \Pi_n$ if and only if $A$ is a refinement of $B$. In this case, $\hat{0}=\{\{1\},\dots,\{n\}\}$ and $\hat{1}=\{1,\dots,n\}$.

Notice that for any $\sigma \in \Pi_n$, if its associated partition is $\lambda \in \mathcal{P}_n$, we have that  \begin{align}\label{isomorphism_stabilizer_Alattice}
    &[\sigma,\hat{1}]\cong \Pi_{l(\lambda)}\\
&[\hat{0},\sigma] \cong \prod_{j \in \supp(\lambda)}\Pi_j^{m_{j,\lambda}} .\end{align}

We choose $\mathcal{G}=\mathcal{G}_{\min}$  the minimal building set. For the partition lattice $\Pi_n$, an element $X \in \Pi_n$ is irreducible if and only if it has exactly $1$ block of size greater or equal than $2$. 

We look at the group $W=S_n$ with the canonical action by permutation of the elements of $\{1,\dots,n\}$. Notice that, for any $\sigma \in \Pi_n$, if its associated partition is $\lambda \in \mathcal{P}_n$, we have that \begin{equation*}
S_{\sigma}= \prod_{j \in \supp(\lambda)}S_j \wr S_{m_{\lambda,j}} .\end{equation*}   

Reduced equivariant characteristic polynomials and Chow polynomials in this case have the following geometric interpretation in terms of hyperplane arrangement and moduli spaces of pointed curves. For any $n \in \N_{>0}$, let $V_n=\C^n/\Span(v)$ where $v=(1,\dots,1)$. Let $\mathcal{A}_n$ be the hyperplane arrangement of type $A_{n-1}$ inside $V_n$ i.e. the collection of hyperplanes $H_{i,j}=\{(x_1,\dots,x_n) \in V_n \ | \ x_i=x_j\}$.
We denote by $M(\mathcal{A}_n)$ the complement, i.e. $\displaystyle M(\mathcal{A}_n)=V_n\setminus \bigcup_{i,j}H_{i,j}$ and $\mathbb{P}(M(\mathcal{A}_n))$ its projectivization.  The space $\mathbb{P}(M(\mathcal{A}_n))$ admits a (minimal) wonderful compactification $\overline{\mathbb{P}(M(\mathcal{A}_n))}_{\min}$, due to the results of \cite{DeConciniProcesi}.  In this case, we have $((\overline{\chi}_{\Pi_n}^{\mathcal{G}_{min}})_{[\hat{0},\hat{1}]}(x)=x^{n}P_{\mathbb{P}(M(\mathcal{A}_n))}^{S_n}(-1/x)=x^n\dfrac{P_{M(\mathcal{A}_n)}(-1/x)}{x-1}$ and $(H_{\Pi_n}^{\mathcal{G}_{min}})_{[\hat{0},\hat{1}]}(x^2)=P_{\overline{\mathbb{P}(M(\mathcal{A}_n))}_{\min}}^{S_n}(x)$.

The spaces associated to hyperplane arrangements of type $A$ have the following modular interpretation. The variety $\mathbb{P}(M(\mathcal{A}_n))$ is isomorphic to the moduli space of smooth curves of genus $0$ with $n+1$-marked points $\mathcal{M}_{0,n+1}$ and its minimal compactification $\overline{\mathbb{P}(M(\mathcal{A}_n))}_{\min}$ is isomorphic to the Deligne-Knudsen-Mumford space $\overline{\mathcal{M}}_{0,n+1}$ of stable curves of genus $0$ with $n+1$-marked points. We can resume the discussion above in the following:
We have that
\begin{equation*}
    (H_{\Pi_n}^{\mathcal{G}_{min}})_{[\hat{0},\hat{1}]}(x^2)=P_{\overline{\mathcal{M}}_{0,n+1}}^{S_n}(x) 
\end{equation*}
and 
\begin{equation*}
((\overline{\chi}_{\Pi_n}^{\mathcal{G}_{min}})_{[\hat{0},\hat{1}]}(x)=x^{n} P_{\mathcal{M}_{0,n+1}}^{S_n}(-1/x)
\end{equation*}

For any $n \in \mathbb{N}_{>0}$, we put $H^A_n \coloneqq \ch((H_{\Pi_n}^{\mathcal{G}_{min}})_{[\hat{0},\hat{1}]}) \in \Lambda(X)[x]$ and $\overline{\chi}^A_n \coloneqq \ch((\overline{\chi}_{\Pi_n}^{\mathcal{G}_{min}})_{[\hat{0},\hat{1}]})$.
Moreover, we put $$H^A(t) \coloneqq \sum_{n \geq 1} H^A_n t^n$$ and $$\overline{\chi}^A(t)\coloneqq \sum_{n \geq 1} \overline{\chi}^A_nt^n .$$

It is proved in \cite{GetzlerOperads}*{Proposition 5.5} that we have 
\begin{equation}
    \label{equation_chitypeA}
    \overline{\chi}^A(t)=\dfrac{\prod_{n \geq 1}(1+p_nt^n)^{T^A_n(x)}-1-p_1xt}{x(x-1)}-p_1t
\end{equation}
where $\displaystyle T^A_n(x)=\dfrac{1}{n}\sum_{d | n}\mu\left(\dfrac{n}{d}\right)x^d$.

\begin{theorem}
We have an equality $$H^A(-\overline{\chi}^A)=p_1t ,$$ i.e. $H^A$, $-\overline{\chi}^A$ are plethystic inverses one to each other.    
\end{theorem}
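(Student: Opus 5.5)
We will deduce the statement from \Cref{theorem_inversion_Chow} applied to $\LL=\Pi_n$ with $W=S_n$ and $\G=\G_{\min}$. By that theorem, $H^\G_\LL * \overline{\chi}^\G_\LL = -\delta$, where $\delta$ is the identity of $I^W(\LL)$. Evaluating this at the interval $[\hat{0},\hat{1}]$ with $n\ge 2$ gives
\[ \sum_{\sigma \in \Pi_n} \frac{\lvert W_\sigma \rvert}{\lvert W \rvert} \Ind_{W_\sigma}^{W} \bigl( (H^\G_\LL)_{\hat{0}\sigma} \otimes (\overline{\chi}^\G_\LL)_{\sigma \hat{1}} \bigr) = 0 . \]
For $n=1$ the same evaluation gives $(H)_{\hat0\hat0}\otimes(\overline\chi)_{\hat0\hat0}=-\mathds{1}$. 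Here $\hat{0}=\hat{1}$, $H^A_1=p_1$ and $\overline{\chi}^A_1=-p_1$, which is consistent with the degree one coefficient $p_1t$. The order of factors should be chosen so that $H$ sits on the top interval $[\sigma,\hat1]$ and $\overline\chi$ on the bottom $[\hat0,\sigma]$. Since the inverse is two-sided, we may equally use $\overline{\chi}^\G_\LL * H^\G_\LL=-\delta$.

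The key steps, in order, are as follows.

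First, group the sum by $S_n$-orbits of $\sigma$, which are indexed by partitions $\lambda\in\mathcal{P}_n$. For each orbit the stabilizer is $S_\sigma=\prod_{j} S_j\wr S_{m_{\lambda,j}}$.

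Second, identify the two factors via \eqref{isomorphism_stabilizer_Alattice}. The top interval $[\sigma,\hat1]\cong\Pi_{l(\lambda)}$ has induced building set equal to the minimal building set of $\Pi_{l(\lambda)}$. The stabilizer acts on it through $\prod_j S_{m_{\lambda,j}}\subseteq S_{l(\lambda)}$, by restriction.

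Third, treat the bottom interval $[\hat0,\sigma]\cong\prod_j\Pi_j^{m_{\lambda,j}}$. Here $\max\G$ consists of the blocks of size at least $2$, and the super-reduced characteristic polynomial is the signed tensor product $\bigotimes_{\text{blocks}}\overline\chi_{\Pi_j}$ twisted by $\sgn_{\max\G}$. The sign of permuting the blocks together with the factor $(-1)^{|\max\G|-1}$ is exactly what turns $\overline{\chi}$ into $-\overline{\chi}$ under plethysm with odd and even contributions handled by eq.~\eqref{plethysm-minus}. Concretely, the representation is $\boxtimes_j (W_j^{\boxtimes m_j}\otimes \text{(sign twist)})$ with $W_j=\overline{\chi}_{\Pi_j}$, and singleton blocks contribute the trivial factor $-\overline{\chi}_1=p_1$. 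Converting the sign twist together with the factor $-1$ shows this is $-$ times the term for $(-\overline\chi_j)$ with no twist.

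Fourth, take $\ch$ and apply \eqref{kplethysm}. The orbit-$\lambda$ summand then becomes
\[ m_{l(\lambda)}\Bigl(\Res^{\Lambda_{l(\lambda)}}_{\Lambda(|\supp\lambda|)}(H^A_{l(\lambda)})\bigl[\textstyle\prod_{j}(-\overline\chi^A_j)\bigr]\Bigr). \]
This uses the transitivity $\Ind^{S_n}_{\prod S_j\wr S_{m_j}}=\Ind^{S_n}_{\prod S_{jm_j}}\Ind$, which turns $\ch$ into the product $m_k$, and the fact that the inner factor $H$ is the restriction from $S_{l(\lambda)}$.

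Finally, sum over $\lambda\in\mathcal{P}_n$. By \Cref{compositionalformulaA} this sum is exactly the coefficient of $t^n$ in $H^A(-\overline\chi^A)$, so that coefficient is $0$ for $n\ge2$. Together with the degree one check, this gives $H^A(-\overline{\chi}^A)=p_1t$. Degrees in $x$ are carried along without change, since the plethysm acts by $p_d[x]=x^d$ in $\Lambda[x]$. It should be checked that this is consistent with the convolution, where the variable is not raised to powers; if it is not, the statement is to be read with coefficients in the $\lambda$-ring $\Lambda[x]$, and the identification in the fourth step must be adjusted accordingly.

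The main obstacle is the third step: tracking precisely how $\sgn_{\max\G}$, restricted to $S_j\wr S_{m_j}$, interacts with the tensor power. This sign character is trivial on the base group $S_j^{m_j}$ and is the sign of $S_{m_j}$ on top. By \eqref{plethysm-for-reps}, twisting by it replaces $f[g]$ by $(\omega f)[g]$. By Remark~\ref{remark-plethysm-minus} this equals $(-1)^{\deg f}f[-g]$. This produces $H_k[-\overline\chi]$ up to the global sign $(-1)^{k}\cdot(-1)^{|\max\G|-1}$ and the signs from singletons. The plan here is to verify that these signs collapse to a uniform overall $-1$ on every orbit, which can be checked by comparing degree and counting blocks.
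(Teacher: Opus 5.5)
Your proposal is correct and follows essentially the same route as the paper. Both arguments evaluate $\overline{\chi}^{\G}_{\Pi_n} * H^{\G}_{\Pi_n} = -\delta$ at $[\hat 0,\hat 1]$, group by orbits with stabilizers $\prod_j S_j\wr S_{m_{\lambda,j}}$, turn the sign-twisted bottom factor into $-\overline{\chi}^A_j$ via \eqref{plethysm-minus} and the wreath-product induction into a $k$-plethysm via \eqref{kplethysm}, and conclude with \Cref{compositionalformulaA}. Your sign bookkeeping checks out: only the non-singleton blocks lie in $\max\G$, which gives a uniform factor $-1$ on each orbit. Your hedge about $x$ resolves the way you first asserted: the permutation action on the graded tensor powers $\overline{\chi}_j^{\boxtimes m_j}$ is exactly $p_d[x]=x^d$, while the $x$-coefficients of $H^A_{l(\lambda)}$ remain outer scalars.
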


\begin{proof}
Notice that we have 
$$( \overline{\chi}^{\mathcal{G}_{min}}_{\Pi_n} \ast H_{\Pi_n}^{\mathcal{G}_{min}})_{[\hat{0},\hat{1}]}=\sum_{\sigma \in \Pi_n} \dfrac{n!}{|S_{\sigma}|} \Ind_{S_{\sigma}}^{S_n}( (\overline{\chi}^{\mathcal{G}_{min}}_{\Pi_n})_{[\hat{0},\sigma]} \otimes (H_{\Pi_n}^{\mathcal{G}_{min}})_{[\sigma,\hat{1}]}) .$$

We put $( \overline{\chi}^{\mathcal{G}_{\min}}_{\Pi_n} \ast H_{\Pi_n}^{\mathcal{G}_{\min}})_{[\hat{0},\hat{1}],\sigma}=\Ind_{S_{\sigma}}^{S_n}( (\overline{\chi}^{\mathcal{G}_{\min}}_{\Pi_n})_{[\hat{0},\sigma]} \otimes (H_{\Pi_n}^{\mathcal{G}_{\min}})_{[\sigma,\hat{1}]})$ and we describe now each of these terms.

Notice that, through the identification of eq.(\ref{isomorphism_stabilizer_Alattice}), $(\mathcal{G}_{\min})_{\sigma}^{\hat{1}}=\mathcal{G}_{\min}$ for the poset $\Pi_{l(\lambda)}$. Moreover, in the stabilizer $S_{\sigma}$, each group $S_j$ acts trivially on the lattice $[\sigma,\hat{1}]$. We have an inclusion $\displaystyle\prod_{j \in \supp(\lambda)}S_{m_{\lambda,j}} \subseteq S_{l(\sigma)}$ and thus for each $V$ representation of $S_{l(\sigma)}$ the restriction $\Res^{S_{l(\sigma)}}_{\prod_{j \in \supp(\lambda)}S_{m_{\lambda,j}}}V $ can be endowed with a natural structure of a $S_{\sigma}$-representation. We have thus an equality in $\Vrep(S_{\sigma})[x]$:
\begin{equation*}
(H_{\Pi_n}^{\mathcal{G}_{min}})_{[\sigma,\hat{1}]} \cong  \Res^{S_{l(\sigma)}}_{\prod_{j \in \supp(\lambda)}S_{m_{\lambda,j}}}(H_{\Pi_{l(\sigma)}}^{\mathcal{G}_{min}})_{[\hat{0},\hat{1}]}  
\end{equation*}

Notice that, through the identification of eq.(\ref{isomorphism_stabilizer_Alattice}), $\max \G$ is in bijection with the blocks of $\sigma$ and it has thus size $l(\lambda)$. Moreover,  each group $S_j$ acts trivially on $\max \G$ and $S_{m_{j,\lambda}}$ acts by permutation of the blocks of the same size. Thus $\sgn_{\max \G}=\prod_j \sgn_{S_{m_{j,\lambda}}}$ seen as a representation of $S_{\sigma}$. Putting all together, we have an equality

\begin{align*}
 &(\overline{\chi}^{\mathcal{G}_{min}}_{\Pi_n})_{[\hat{0},\sigma]}= \\ &=(-1)^{l(\lambda)-1}\prod_{j \in \supp(\lambda)}(\overline{\chi}^{\mathcal{G}_{min}}_{\Pi_j})_{[\hat{0},\hat{1}]}^{\boxtimes m_{j,\lambda}} \otimes \bigotimes_{j \in \supp(\lambda)}\sgn_{S_{m_{j,\lambda}}}   
\end{align*}

Put $\overline{S_{\sigma}}=\displaystyle \prod_{j \in \supp(\lambda)} S_{jm_{j,\lambda}}$. We deduce that we have \begin{align}
    &( \overline{\chi}^{\mathcal{G}_{min}}_{\Pi_n} \ast H_{\Pi_n}^{\mathcal{G}_{min}})_{[\hat{0},\hat{1}],\sigma}=\nonumber\\ &\Ind^{S_n}_{S_{\sigma}} \prod_{j \in \supp(\lambda)}(\overline{\chi}^{\mathcal{G}_{min}}_{\Pi_j})_{[\hat{0},\hat{1}]}^{\boxtimes m_{j,\lambda}} \otimes \bigotimes_{j \in \supp(\lambda)}\sgn_{S_{m_{j,\lambda}}} \Res^{S_{l(\sigma)}}_{\prod_{j \in \supp(\lambda)}S_{m_{\lambda,j}}}(H_{\Pi_{l(\sigma)}}^{\mathcal{G}_{min}})_{[\hat{0},\hat{1}]}=\nonumber \\
&=(-1)^{l(\lambda)}\Ind^{S_n}_{\overline{S_{\sigma}}} \Ind^{\overline{S_{\sigma}}}_{S_{\sigma}} \prod_{j \in \supp(\lambda)}(\overline{\chi}^{\mathcal{G}_{min}}_{\Pi_j})_{[\hat{0},\hat{1}]}^{\boxtimes m_{j,\lambda}} \otimes \Res^{S_{l(\sigma)}}_{\prod_{j \in \supp(\lambda)}S_{m_{\lambda,j}}}(\sgn \otimes H_{\Pi_{l(\sigma)}}^{\mathcal{G}_{min}})_{[\hat{0},\hat{1}]}\label{plehtysm-label}    
\end{align}

From eq.(\ref{plethysm-minus}) and eq.(\ref{kplethysm}) and eq.(\ref{plehtysm-label}) above, we see that we have thus an equality \begin{equation}
\label{compositional-latticeA}
    \ch(( \overline{\chi}^{\mathcal{G}_{min}}_{\Pi_n} \ast H_{\Pi_n}^{\mathcal{G}_{min}})_{[\hat{0},\hat{1}]})=-\sum_{\lambda \in \mathcal{P}_n}m_{l(\lambda)}\left(\Res_{\Lambda(|\supp(\lambda)|)}^{\Lambda_{l(\lambda)}}(H^A_{l(\lambda)})[\prod_{j \in \supp(\lambda)}-\overline{\chi}^A_j]\right)
\end{equation}

From Theorem \ref{theorem_inversion_Chow}, we have that \begin{equation}
\label{equivariant-incidence-typeA}
    ( \overline{\chi}^{\mathcal{G}_{min}}_{\Pi_n} \ast H_{\Pi_n}^{\mathcal{G}_{min}})_{[\hat{0},\hat{1}]}=\begin{cases}
        0 \text{ if } n \geq 2\\
        -p_1 \text{ if } n=1
    \end{cases}
\end{equation}

Finally, from eq.(\ref{equivariant-incidence-typeA}) and eq.(\ref{compositional-latticeA}) and Theorem \ref{compositionalformulaA}, we deduce the wanted result.    
\end{proof}

This result was already shown in  \cite{GetzlerOperads}, where the author used operads and the fact that, for each $n$, the representation of $S_n$ on the cohomology of $H^{\bullet}(\overline{\mathcal{M}}_{0,n+1})$ is the restriction of the $S_{n+1}$ representation coming from the permutation of the $n+1$ marked points. 

\begin{remark}
\label{inverted-formula-A}
The set of series $F \in \Lambda[x][[t]]$ such that $F_1=p_1$ is a a group with respect to the plethysm operation. We deduce thus that we have also
\begin{equation}
    -\overline{\chi}^A(H^A(t))=p_1t
    \label{remark-inversion-equation}
\end{equation}
Similarly, eq.(\ref{remark-inversion-equation}) can be shown by the same line of reasoning used in the proof above, using that, for any $n$, we have \begin{align*}
    ( H^{\mathcal{G}_{min}}_{\Pi_n} \ast \overline{\chi}_{\Pi_n}^{\mathcal{G}_{min}})_{[\hat{0},\hat{1}]}=\begin{cases}
        0 \text{ if } n \geq 2\\
        -p_1 \text{ if } n=1
    \end{cases}
\end{align*} i.e. switching the role of $\overline{\chi}$ and $H$ in our proof. As remarked in \cite{getzler-mhs}*{Theorem 2.7}, we have that   $$\overline{\chi}^A(t)=\dfrac{(1+p_1t)^{\langle x \rangle}-1-p_1xt}{x(x-1)}-p_1t .$$ We have thus $$\dfrac{(1+H_A)^{\langle x \rangle}-1-H_Ax}{x(x-1)}-H_A=-p_1t $$ which is equivalent to \begin{equation}\label{manin-equivariant}(1+H_A)^{\langle x \rangle}-x^2H_A=1-p_1tx(x-1) \end{equation} which is an equivariant generalization of the functional identity \cite{manin}*{Theorem 0.3.1} 
\end{remark}

\subsection{The case of the lattice of signed partitions}

Let $\Pi^B_n$ be the lattice of signed partitions. We quickly recall its definition here. Let $\langle n \rangle$ be the set $\{1,2,\dots,n,\overline{1},\dots,\overline{n}\}$. This set has a natural involution $i \to \overline{i}$, where we set $\overline{\overline{i}}=i$. For each $B \subseteq \langle n \rangle$, we set $\overline{B}=\{\overline{i} \ | \ i \in B\}$. A signed partition is a partition $\pi$ of the set $\langle n \rangle $
such that for each block $B \in \pi$, we have that $\overline{B} \in \pi$ and there is at most one block $Z$ such that $\overline{Z}=Z$. We put $\pi(0)=|Z|$. Recall that we can have $Z=\emptyset$, i.e. $\pi(0)=0$. We have that $\pi\setminus \{Z\}$ determines a partition of $n-\pi(0)/2$, which we denote by $\sigma$. We denote by $\lambda$ the corresponding element of $\mathcal{P}.$ In this case too, we say that $\pi \leq \pi'$ if and only if $\pi$ is a refinement of $\pi'$ and we work with the minimal building set $\G=\G_{\min}$. We have that $\pi \in \G_{\min}$ if and only if either $\pi(0)>0$ and $\sigma$ has only blocks of size $1$ or $\pi(0)=0$ and $\sigma$ has exactly $1$ block of size greater than $1$.

Notice that, for any $\pi \in \Pi^B_n$, we have that 
\begin{align} &[\hat{0},\pi] \cong \Pi^B_{\pi(0)/2} \times \prod_{j \in \supp(\lambda)}\Pi^{m_{j,\lambda}}_{j} \\ &[\pi,\hat{1}] \cong \Pi^B_{l(\lambda)}. \label{lattice-b-interval}
\end{align}

The group $B_n$ naturally acts on $\langle n \rangle$, where $S_n$ acts by permutation of the elements $\{1,\dots,n\}$ and $\Z_2$ acts through the involution $i \to \overline{i}.$ This action gives an action of $B_n$ on $\Pi^B_n$. Notice that, for each $\pi \in \Pi^B_n$, the pair $(\pi(0),\lambda)$ uniquely determines $\pi$ up to the action of the group $B_n$. Moreover, we have that $$B_{\pi}=\displaystyle B_{\pi(0)/2} \times \prod_{j \in \supp(\lambda)}S_j \wr B_{m_{j,\lambda}} .$$

For any $n$, similarly as above, we put $H^B_n \coloneqq \ch((H_{\Pi^B_n}^{\mathcal{G}_{min}})_{[\hat{0},\hat{1}]}) \in \Lambda^{(2)}[x]$ and $\overline{\chi}^B_n \coloneqq \ch((\overline{\chi}_{\Pi^B_n}^{\mathcal{G}_{min}})_{[\hat{0},\hat{1}]})$. These polynomials have the following geometric interpretation in this case.

For any $n$, let $\mathcal{B}_n$ be the hyperplane arrangement of type $B_n$, i.e. the collection of hyperplanes of $\C^n$ given by $Y_i=\{(x_1,\dots,x_n) \in \C^n  \ | \ x_i=0\}$, $H_{i,j}^+=\{(x_1,\dots,x_n) \in \C^n \ | \ x_i=x_j\}$ and $H_{i,j}^-=\{(x_1,\dots,x_n) \in \C^n \ | \ x_i=-x_j\}$ for $i \neq j$. We denote by $M(\mathcal{B}_n)$ the corresponding complement and by $\mathbb{P}(M(\mathcal{B}_n))$ its projectivization. This variety admits a (minimal) wonderful compactification, which we denote simply by $\overline{\mathcal{M}}^B_{0,n}$ by analogy with the case of type $A$. All these varieties admit natural $B_n$-actions and we have $$H^B_n(x^2)=P^{B_n}_{\overline{\mathcal{M}}^B_{0,n}}(x) $$ and $$\overline{\chi}^B_n(x)=\dfrac{P^{B_n}_{M(\mathcal{B}_n)}(-1/x)x^n}{x-1}=x^nP^{B_n}_{\mathbb{P}(M(\mathcal{B}_n))}(-1/x) .$$

We define  the formal power series $\overline{\chi}^B(t) \in \Lambda^{(2)}[[t]]$ as $$\overline{\chi}^B(t)=-1+\sum_{n \geq 1}\overline{\chi}^B_n(x)t^n$$ and $$H^B(t)=1+\sum_{n \geq 1}H^B_n(x)t^n .$$ Notice that the choice of the constant terms is consistent with the choices of \cite{Ferroni-Pagaria-Vecchi}. We have the following.

\begin{theorem}
\label{identity-typeB}
We have the equality
\begin{equation*}
    \overline{\chi}^B H^B(-\overline{\chi}^A[X+Y])=-1.
\end{equation*}
\end{theorem}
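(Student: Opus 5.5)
We follow the strategy used for Getzler's identity in type $A$: evaluate the $[\hat{0},\hat{1}]$ entry of $\overline{\chi}^{\G_{\min}}_{\Pi^B_n} * H^{\G_{\min}}_{\Pi^B_n}$ in two ways. By \Cref{theorem_inversion_Chow}, $\overline{\chi}*H = -\delta$, where $\delta$ is the identity of $I^{W}(\LL)$. Its $[\hat{0},\hat{1}]$ entry vanishes for $n\geq 1$ and equals $-1$ for $n=0$. On the other side, we expand the convolution as a sum over $B_n$-orbits of signed partitions $\pi$. Each orbit is indexed by the pair $(\pi(0),\lambda)$, and its contribution is $\Ind_{B_\pi}^{B_n}\bigl((\overline{\chi})_{[\hat{0},\pi]}\otimes (H)_{[\pi,\hat{1}]}\bigr)$, where $B_\pi = B_{\pi(0)/2}\times\prod_j S_j\wr B_{m_{j,\lambda}}$.

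First we identify the two factors as $B_\pi$-representations using (\ref{lattice-b-interval}).

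\textbf{Upper interval.} The interval $[\pi,\hat{1}]\cong\Pi^B_{l(\lambda)}$ carries the induced minimal building set, and the factor $B_{\pi(0)/2}$ together with the $S_j$'s act trivially on it. Hence $(H)_{[\pi,\hat{1}]}$ is the restriction of $(H_{\Pi^B_{l(\lambda)}})_{[\hat{0},\hat{1}]}$ to $\prod_j B_{m_{j,\lambda}}$, inflated to $B_\pi$.

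\textbf{Lower interval.} The maximal elements of $\G$ below $\pi$ are the zero block $Z$ (when $\pi(0)>0$) and the pairs $\{B,\overline{B}\}$ with $|B|\geq 2$. For singleton pairs the type-$A$ factor $\Pi_1$ is trivial and contributes nothing, so these pairs must be handled consistently with the convention $\overline{\chi}^A_1 = -p_1$ from the type-$A$ proof. The group $B_{m_{j,\lambda}}$ acts on the pairs of size $j$ through $S_{m_{j,\lambda}}$. Therefore
\[
(\overline{\chi})_{[\hat{0},\pi]} = (-1)^{|\max\G|-1}\,(\overline{\chi}_{\Pi^B_{\pi(0)/2}})\boxtimes\bigotimes_j (\overline{\chi}_{\Pi_j})^{\boxtimes m_{j,\lambda}}\otimes\sgn,
\]
where $\sgn$ is pulled back from $S_{m_{j,\lambda}}$.

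Next we take Frobenius characteristics. The induction from $B_{\pi(0)/2}\times\prod_j B_{j m_{j,\lambda}}$ to $B_n$ becomes a product, so the $\Pi^B_{\pi(0)/2}$ part contributes a factor $\overline{\chi}^B_{\pi(0)/2}$. The remaining induction is from $\prod_j S_j\wr B_{m_{j,\lambda}}$, and \Cref{thm:Ind_repr_wr} (in its multi-factor form, as in the type $B$ analogue of (\ref{kplethysm})) converts it into $\Res(H^B_{l(\lambda)})\bigl[\prod_j \overline{\chi}^A_j[X+Y]\bigr]$. The sign twist is absorbed by Remark \ref{remark-plethysm-minus}: tensoring with the sign of $S_m$ pulled back to $B_m$ is the involution $\omega$, and $\omega$ applied in degree $m$ with a factor $(-1)^m$ turns $\overline{\chi}^A_j[X+Y]$ into $-\overline{\chi}^A_j[X+Y]$. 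Collecting the global sign $(-1)^{|\max\G|-1}$, the $\epsilon=-1$ convention, and \Cref{compositional-formulatypeB}, the sum over $\lambda$ of these terms is the coefficient of $t^{n-\pi(0)/2}$ in $H^B(-\overline{\chi}^A[X+Y])$. Summing over $\pi(0)$ then gives the coefficient of $t^n$ in the product $\overline{\chi}^B\cdot H^B(-\overline{\chi}^A[X+Y])$, up to an overall sign.

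It remains to check the constant terms against the conventions $\overline{\chi}^B_0=-1$ and $H^B_0=1$. The case $\pi(0)=0$ corresponds to the constant $-1$ of $\overline{\chi}^B$, and the case $\pi=\hat{1}$ uses $(H)_{\hat{1}\hat{1}}=1$. The $n=0$ coefficient is then $-1$, which yields the stated identity.

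The main obstacle is the sign bookkeeping. We must verify that $\omega$ for the $B_m$-action factoring through $S_m$ is exactly what \Cref{thm:Ind_repr_wr} and the even/odd plethysm require, so that $f[-g]=(-1)^n\omega(f)[g]$ holds in the bisymmetric setting. We must also check that the zero-block term does not carry an extra sign relative to the product formula. We would test both points first on $n=1,2$.
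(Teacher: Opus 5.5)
Your proposal is correct and is essentially the paper's argument. You evaluate the $[\hat{0},\hat{1}]$ entry of $\overline{\chi}^{\G_{\min}}_{\Pi^B_n} * H^{\G_{\min}}_{\Pi^B_n}$ once via \Cref{theorem_inversion_Chow} and once orbit by orbit, with the upper factor a restriction of $H^B_{l(\lambda)}$ and the lower one $\overline{\chi}^B_{\pi(0)/2}$ times sign-twisted type-$A$ factors, then convert with \Cref{thm:Ind_repr_wr}, $\omega(f)[g]=(-1)^d f[-g]$ and \Cref{compositional-formulatypeB}. (The sign you flag comes out exactly, not up to an overall sign: $(-1)^{\lvert\max\G\rvert-1}$ times the factors $(-1)^{m_{j,\lambda}}$ for $j\geq 2$ gives $+1$ if $\pi(0)>0$ and $-1=\overline{\chi}^B_0$ if $\pi(0)=0$, and singleton pairs are absorbed by $-\overline{\chi}^A_1[X+Y]=p_1[X+Y]$.)
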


\begin{proof}
 Notice that
$$( \overline{\chi}^{\mathcal{G}_{min}}_{\Pi^B_n} \ast H_{\Pi^B_n}^{\mathcal{G}_{min}})_{[\hat{0},\hat{1}]}=\sum_{\pi \in \Pi^B_n} \dfrac{2^nn!}{|B_{\pi}|} \Ind_{B_{\pi}}^{B_n}( (\overline{\chi}^{\mathcal{G}_{min}}_{\Pi^B_n})_{[\hat{0},\pi]} \otimes (H_{\Pi^B_n}^{\mathcal{G}_{min}})_{[\pi,\hat{1}]}) .$$

We put $( \overline{\chi}^{\mathcal{G}_{min}}_{\Pi^B_n} \ast H_{\Pi^B_n}^{\mathcal{G}_{min}})_{[\hat{0},\hat{1}],\pi}=\Ind_{B_{\pi}}^{B_n}( (\overline{\chi}^{\mathcal{G}_{min}}_{\Pi^B_n})_{[\hat{0},\pi]} \otimes (H_{\Pi^B_n}^{\mathcal{G}_{min}})_{[\pi,\hat{1}]})$ and we describe now each of these terms.
 
Notice that, through the identification of eq.(\ref{lattice-b-interval}), we have that $\G^{\hat{1}}_{\pi}$ is the minimal building of $\Pi^B_{l(\lambda)}$.
Moreover, have an inclusion $\displaystyle\prod_{j \in \supp(\lambda)}B_{m_{\lambda,j}} \subseteq B_{l(\lambda)}$ and thus for each $V$ representation of $B_{l(\lambda)}$ the restriction $\Res^{B_{l(\lambda)}}_{\prod_{j \in \supp(\lambda)}B_{m_{\lambda,j}}}V $ can be endowed with a natural structure of a $B_{\pi}$-representation. We have thus an equality in $\Vrep(B_{\pi})[x]$:
\begin{equation*}
(H_{\Pi^B_n}^{\mathcal{G}_{min}})_{[\pi,\hat{1}]} \cong  \Res^{B_{l(\lambda)}}_{\prod_{j \in \supp(\lambda)}B_{m_{\lambda,j}}}(H_{\Pi^B_{l(\sigma)}}^{\mathcal{G}_{min}})_{[\hat{0},\hat{1}]}  
\end{equation*}

Notice that, through the identification of eq.(\ref{lattice-b-interval}), $\max \G$ is in bijection with the blocks of $\sigma$ and the block $Z$ and it has thus size $l(\lambda)+1$. Moreover, $B_{\pi(0)/2}$ acts trivially on $\max \G$,  each group $S_j$ acts trivially on $\max \G$ and $B_{m_{j,\lambda}}$ acts by permutation of the blocks of $\sigma$ the same size. Thus $\sgn_{\max \G}=\prod_j \sgn^{(1)}_{B_{m_{j,\lambda}}}$ seen as a representation of $B_{\pi}$. Putting all together, we have an equality in $\Vrep(B_{\pi})[x]$

\begin{equation*}
 (\overline{\chi}^{\mathcal{G}_{min}}_{\Pi^B_n})_{[\hat{0},\pi]}=(\overline{\chi}^{\mathcal{G}_{min}}_{\Pi^B_{\pi(0)/2}})_{[\hat{0},\hat{1}]} \boxtimes  \prod_{j \in \supp(\lambda)}(\overline{\chi}^{\mathcal{G}_{min}}_{\Pi_j})_{[\hat{0},\hat{1}]}^{\boxtimes m_{j,\lambda}} \otimes_{j \in \supp(\lambda)} \sgn^{(1)}_{B_{m_{j,\lambda}}} 
\end{equation*}

Put $\overline{B}_{\pi}=\displaystyle \prod_{j \in \supp(\lambda)} B_{jm_{j,\lambda}}$. We deduce that we have 
\begin{align*}
    &( \overline{\chi}^{\mathcal{G}_{min}}_{\Pi^B_n} \ast H_{\Pi^B_n}^{\mathcal{G}_{min}})_{[\hat{0},\hat{1}],\pi}=
    \\
    &= \begin{multlined}[t] (-1)^{l(\lambda)}\Ind^{B_n}_{ B_{\pi}}(\overline{\chi}^{\mathcal{G}_{min}}_{\Pi^B_{\pi(0)/2}})_{[\hat{0},\hat{1}]} \boxtimes \prod_{j \in \supp(\lambda)}(\overline{\chi}^{\mathcal{G}_{min}}_{\Pi_j})_{[\hat{0},\hat{1}]}^{\boxtimes m_{j,\lambda}} \otimes \Res^{B_{l(\lambda)}}_{\prod_{j \in \supp(\lambda)}B_{m_{\lambda,j}}}(H_{\Pi^B_{l(\sigma)}}^{\mathcal{G}_{min}})_{[\hat{0},\hat{1}]} \otimes \\ \otimes_{j \in \supp(\lambda)} \sgn^{(1)}_{B_{m_{j,\lambda}}} 
    \end{multlined}
    \\
&= \begin{multlined}[t] (-1)^{l(\lambda)}\Ind^{B_n}_{B_{\pi(0)/2} \times \overline{B}_{\pi}} \Ind^{B_{\pi(0)/2} \times\overline{B}_{\pi}}_{B_{\pi}} (\overline{\chi}^{\mathcal{G}_{min}}_{\Pi^B_{\pi(0)/2}})_{[\hat{0},\hat{1}]} \boxtimes \prod_{j \in \supp(\lambda)}(\overline{\chi}^{\mathcal{G}_{min}}_{\Pi_j})_{[\hat{0},\hat{1}]}^{\boxtimes m_{j,\lambda}} \otimes \\
 \otimes\Res^{B_{l(\lambda)}}_{\prod_{j \in \supp(\lambda)}B_{m_{\lambda,j}}}(H^{\mathcal{G}_{min}}_{\Pi^B_{l(\sigma)}} \otimes \sgn^{(1)}_{B_{l(\lambda)}})_{[\hat{0},\hat{1}]}         
 \end{multlined}
\end{align*}

Similarly to eq.(\ref{plethysm-minus}), for any $f,g \in \Lambda^{(2)}$ with $f$ homogenous of degree $d$, we have $$\omega(f)[g]=(-1)^{d}f[-g] .$$

From Theorem \ref{thm:Ind_repr_wr}, we see that we have thus an equality \begin{equation*}
\begin{split}
    \ch\left(\prod_{j \in \supp(\lambda)}(\overline{\chi}^{\mathcal{G}_{min}}_{\Pi^B_j})_{[\hat{0},\hat{1}]}^{\boxtimes m_{j,\lambda}} \otimes \Res^{B_{l(\lambda)}}_{\prod_{j \in \supp(\lambda)}B_{m_{\lambda,j}}}(H_{\Pi^B_{l(\sigma)}}^{\mathcal{G}_{min}})_{[\hat{0},\hat{1}]}\right)= \\
    = \Res^{\Lambda^{(2)}_{l(\lambda)}}_{\Lambda^{(2)}|(\supp(\lambda))|}H^{B}_{l(\lambda)}[\prod_{j \in \supp(\lambda)}-\overline{\chi}^A_j[X+Y]],
\end{split}
\end{equation*}
from which we deduce that we have 
\begin{equation}
\label{compositional-latticeB2}
    \ch\left( \overline{\chi}^{\mathcal{G}_{min}}_{\Pi^B_n} \ast H_{\Pi^B_n}^{\mathcal{G}_{min}})_{[\hat{0},\hat{1}],\pi}\right)=\overline{\chi}^B_{\pi(0)/2}m_{l(\lambda)} \left(\Res^{\Lambda^{(2)}_{l(\lambda)}}_{\Lambda^{(2)}|(\supp(\lambda))|}H^{B}_{l(\lambda)}[\prod_{j \in \supp(\lambda)}-\overline{\chi}^A_j[X+Y]]\right)
\end{equation}

From Theorem \ref{theorem_inversion_Chow}, we have that, for every $n \geq 1$, 
\begin{equation}
\label{equivariant-incidence-typeB}
    ( \overline{\chi}^{\mathcal{G}_{min}}_{\Pi^B_n} \ast H_{\Pi^B_n}^{\mathcal{G}_{min}})_{[\hat{0},\hat{1}]}=0
\end{equation}

Finally, from eq.(\ref{equivariant-incidence-typeB}) and eq.(\ref{compositional-latticeB2}) and Theorem \ref{compositional-formulatypeB}, we deduce the wanted result.    
\end{proof}

\section{Cohomology of Arrangements of Type \texorpdfstring{$B_n$}{Bn}}
\label{sec:CohomologyTypeB}

Recall that, every Coxeter group $W$ is naturally associated with an arrangement of hyperplanes $\mathcal{A}(W)$.
The topology of the complement $M(W)$ and of its natural $W$-action have always been the focus of considerable attention in the literature, starting from the seminal work of Brieskorn \cite{brieskorn}, who described the cohomology of $M(W)$.
Among the subsequent works on this topic, it is worth mentioning the article of Lehrer and Solomon \cite{lehrer-solomon} who described the $W$-action on cohomology in type $A$. Later, Lehrer studied the case of type $B$ \cite{lehrer1} and Douglass \cite{DouglassTypeBn} gave a more refined decomposition of the cohomology in this case. 

We recall the main result of \cite{DouglassTypeBn} on the action of the group $B_n$ on the cohomology $H^p (M(\mathcal{B}_n)) \cong \OS_{\mathcal{B}_n,p}$. Let $\pi = (1^{a_1}, 2^{a_2}, \dots) ,\lambda = (1^{b_1}, 2^{b_2}, \dots)$ be two partitions and define the groups
\[ Y_n \coloneqq \mathbb{Z}_2 \times C_n \subseteq B_n \]
where $C_n$ is a positive $n$-cycle and $\mathbb{Z}_2$ is the center.
Define the group $Y_{(\pi, \lambda)}$ as the product
\[ Y_{\pi, \lambda} \coloneqq \bigtimes_i Y_i \wr S_{a_i} \times \bigtimes_j Y_j \wr S_{b_j}, \]
and its character
\[ \zeta_{\pi, \lambda} \coloneqq \left( \boxtimes_i (1 \boxtimes \zeta_i ) \boxtimes \sgn_{a_i} \right) \boxtimes \left( \boxtimes_j (1 \boxtimes \zeta_j ) \boxtimes  \mathds{1}_{b_j} \right).
\]
\begin{theorem}[\cite{DouglassTypeBn}*{Corollary 3.9}]
    For all $p \leq n$ we have
    \[ H^p (M(\mathcal{B}_n)) \cong \OS_{\mathcal{B}_n,p} \cong V_{(1^n),\emptyset} \otimes \bigoplus_{\substack{\lvert \pi\rvert + \lvert \lambda \rvert = n \\ \ell(\pi)=n-p}} \Ind_{Y_{\pi, \lambda}}^{B_n} \zeta_{\pi, \lambda} \]
\end{theorem}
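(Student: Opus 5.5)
The plan is to follow the standard route of Lehrer--Solomon, adapted to type $B$. We combine the Brieskorn decomposition of the Orlik--Solomon algebra with the description of the stabilizers $B_\pi$ of signed partitions from \Cref{sec:typeAB}, and then identify each local top-degree piece with an induced character from the groups $Y_i$.

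\textbf{Step 1 (Brieskorn decomposition).} Recall that $\OS_{\LL,p}=\bigoplus_{X\in\LL,\ \rk X=p}\OS_{[\hat 0,X],p}$, where each summand is the top-degree Orlik--Solomon algebra of the interval. This decomposition is compatible with the action of $W$, since $W$ permutes the flats. Grouping the flats into $B_n$-orbits gives
\[ \OS_{\mathcal{B}_n,p}\cong\bigoplus_{[\pi]\,:\,\rk\pi=p}\Ind_{B_\pi}^{B_n}\OS^{\mathrm{top}}_{[\hat0,\pi]} .\]
Here the orbits are indexed by pairs $(\pi(0),\lambda)$, and the stabilizers are $B_\pi=B_{\pi(0)/2}\times\prod_j S_j\wr B_{m_{j,\lambda}}$.

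\textbf{Step 2 (local pieces).} Use $[\hat0,\pi]\cong\Pi^B_{\pi(0)/2}\times\prod_j\Pi_j^{m_{j,\lambda}}$. The top OS algebra of a product is the tensor product of the top OS algebras of the factors. For a wreath factor $S_j\wr B_m$, the $B_m$ part permutes $m$ tensor factors, each of odd or even degree $j-1$, with the Koszul sign. The $\Z_2^m$ part acts on each block through the involution $i\mapsto\bar i$. On the type-$A$ flat this involution is the map $x\mapsto -x$, which fixes every hyperplane $x_a=x_b$ and so acts trivially on the OS generators.

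The type $A$ factor is given by Lehrer--Solomon: $\OS^{\mathrm{top}}(\Pi_j)\cong\sgn_j\otimes\Ind_{C_j}^{S_j}\zeta_j$. Together with the Koszul sign and the trivial $\Z_2$ action, this should produce exactly the factors $Y_j\wr S_{b_j}$ carrying the characters $(1\boxtimes\zeta_j)\boxtimes\mathds 1_{b_j}$ once everything is twisted by $V_{(1^n),\emptyset}$. The identity $\Ind_{S_j\wr S_m}\circ\Ind_{C_j\wr S_m}=\Ind_{C_j\wr S_m}$ is used together with the fact that $\Z_2\times C_j=Y_j$. The global twist $V_{(1^n),\emptyset}$ accounts for the $\sgn_j$ factors and for the Koszul signs.

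\textbf{Step 3 (top degree of type $B$).} This is the main obstacle. One has to prove that
\[ V_{(1^k),\emptyset}\otimes\OS^{\mathrm{top}}(\Pi^B_k)\cong\bigoplus\Ind_{Y_{\pi,\emptyset}}^{B_k}\zeta_{\pi,\emptyset}, \]
with the sum over partitions $\pi\vdash k$ and the index set constrained by $\ell(\pi)=n-p$ in the global count. I would first try to avoid a direct computation by passing to characters. By \Cref{lemma:reduced_chi_is_well_defined}, summed with Step 1 over the whole lattice, the graded characters $\OS(\mathcal B_n)$ for all $n$ are determined recursively by the top-degree pieces. So it suffices to show that the right-hand side of the theorem satisfies the same recursion.

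This reduces to a generating-function identity in $\Lambda^{(2)}$. The Frobenius characteristics of $\Ind_{Y_i\wr S_a}$ are computed with \Cref{thm:Ind_repr_wr0} and \Cref{lem:ind_Z_2xS_n}, and they give plethystic exponentials of $p_i[X+Y]$-type terms. Two checks are then needed:
\begin{enumerate}
\item the resulting product formula matches the factorization of the type-$B$ Poincaré polynomial, $\prod(1+(2i-1)t)$, and its equivariant refinement;
\item the centralizer-type cycle index computations close up.
\end{enumerate}
If this bookkeeping fails, the fallback is to compute the top-degree piece directly. One would use Lehrer's description of the action on $H^{\mathrm{top}}$ via the fibration $M(\mathcal B_k)\to M(\mathcal B_{k-1})$ whose fibre is $\C$ minus $2k-1$ points. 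This gives $\OS^{\mathrm{top}}(\Pi^B_k)$ as an induced module from the stabilizer of a regular element, which is where the groups $Y_i=\Z_2\times C_i$ naturally appear.
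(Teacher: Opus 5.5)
The paper gives no proof of this statement; it is quoted from Douglass. Your argument therefore has to stand on its own, and it does not.

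\textbf{The sign bookkeeping in Step 2 swaps the roles of $\pi$ and $\lambda$.} Step 1, the Lehrer--Solomon input, and the observation that $\Z_2$ acts trivially on the type-$A$ factors are all fine. The error is elsewhere. Take a flat with $m$ block pairs of size $j$, and a transposition in $S_m$ exchanging two of these blocks.
\begin{itemize}
\item On $\OS^{\mathrm{top}}$ it acts by the Koszul sign $(-1)^{j-1}$.
\item In $V_{(1^n),\emptyset}$ it is a product of $j$ transpositions of $S_n$, so it contributes $(-1)^j$.
\end{itemize}
The product is $-1$, not $+1$. So after twisting, the nonzero blocks give $\Ind_{Y_j\wr S_m}^{S_j\wr B_m}\bigl((1\boxtimes\zeta_j)^{\boxtimes m}\otimes\sgn_m\bigr)$, which are the $\pi$-factors, not the $\lambda$-factors. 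This is exactly why the constraint reads $\ell(\pi)=n-p$: the rank of a signed partition is $n$ minus its number of nonzero block pairs. Your phrase ``constrained by $\ell(\pi)=n-p$ in the global count'' papers over this mismatch. With your assignment, the flat $\hat 0$ of $\Pi^B_2$ would give $H^0(M(\mathcal B_2))\cong V_{(1^2),\emptyset}$, which is not the trivial representation.

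Consequently the zero block must produce the $\lambda$-factors, with trivial characters on the $S_{b_j}$. The identity you set out to prove in Step 3 is then false already for $k=2$. In rank $2$, $\partial\colon\OS_2\hookrightarrow\OS_1$ is injective and equivariant, so $\OS^{\mathrm{top}}(\Pi^B_2)$ is the augmentation kernel of the permutation module on the four hyperplanes. This gives $V_{(1^2),\emptyset}\otimes\OS^{\mathrm{top}}(\Pi^B_2)\cong\mathds{1}\oplus\Ind_{Y_2}^{B_2}(1\boxtimes\zeta_2)$. Your formula predicts $V_{(1^2),\emptyset}\oplus\Ind_{Y_2}^{B_2}(1\boxtimes\zeta_2)$ instead. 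The correct local claim is $V_{(1^k),\emptyset}\otimes\OS^{\mathrm{top}}(\Pi^B_k)\cong\bigoplus_{\lambda\vdash k}\Ind_{Y_{\emptyset,\lambda}}^{B_k}\zeta_{\emptyset,\lambda}$.

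\textbf{Step 3 is only a strategy, not a proof.} Even in its corrected form, Step 3 is the whole content of Douglass's theorem.
\begin{itemize}
\item Evaluating \Cref{lemma:reduced_chi_is_well_defined} at $x=0$ does give a recursion that determines $\OS^{\mathrm{top}}(\Pi^B_k)$ from smaller cases. However, the stabilizer of a flat acts on $[F,\hat 1]\cong\Pi^B_{\ell(\lambda)}$ through $\prod_j B_{m_{j,\lambda}}$. Checking that the candidate satisfies this recursion is therefore a genuine plethystic identity in $\Lambda^{(2)}$, and you never write it down.
\item Comparing with $\prod_i(1+(2i-1)t)$ only checks dimensions, and ``the cycle index computations close up'' is not an argument.
\item The fallback does not help: the fibration $M(\mathcal B_k)\to M(\mathcal B_{k-1})$ is only $B_{k-1}$-equivariant, so it yields Betti numbers, not the $B_k$-module.
\end{itemize}

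One route that works with material already in the paper is to compare generating functions. The computation in the proof of \Cref{formula-chiB} evaluates the series of the right-hand side as $\prod_n(\cdots)^{T^B_n(x)}$; everything after its first line is pure symmetric-function manipulation and does not use Douglass. The finite-field count in the following subsection obtains $\chi^B$ as the same product. Since $p$ is recorded by the power of $x$ and $\ch$ is injective, this gives the isomorphism in each degree.
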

 In what follows, we put $\chi^B\coloneqq \displaystyle \sum_{n,p} (-1)^{p} \ch (H^p(M(\mathcal{B}_n))) x^{n-p}t^n $. For every $n$, let $\Lie_n$ be the degree $n$-part of the free Lie algebra in $n$ generators. Notice that $S_n$ acts on $\Lie_n$ by permutation of the generators. We can then consider $\Lie_n$ as an $S_n$-representation. We put $\Lie_{\epsilon} = \sum_{i=1}^\infty \ch (\Lie_i) \epsilon^it^i .$

From the previous result we obtain:
\begin{theorem}
\label{formula-chiB}
The equivariant generating series of characteristic polynomials of type $B$ is
    \begin{align*}
    \chi^B &= \omega \left( \Exp \left( (1- x) \bigl( \Lie_{\epsilon} [X+Y])^\even \right) \right)\Big|_{\epsilon=-1} \\
    &=  \prod_{n \geq 1} \left( \frac{(1+ t^np_n[X+Y])(1+ t^n p_n[X- Y])}{1+ t^{2n} p_{2n}[X+Y]} \right)^{T_n^B(x)},
    \end{align*}
    where
    \[ T_n^B(x)= \frac{1}{2n}\sum_{\substack{d \mid n \\ d \textnormal{ odd}}} \mu (d) (x^{\frac{n}{d}}-1).\]
\end{theorem}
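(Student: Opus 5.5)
We plan to take Douglass's decomposition (the Theorem quoted from \cite{DouglassTypeBn}) as input and compute its Frobenius characteristic term by term, using the plethystic machinery of Section \ref{sectionplethysm}. The twist by $V_{(1^n),\emptyset}$ is exactly the involution $\omega$ of the Remark in Section \ref{section-frobeniuscharacteristic}, so it suffices to compute
\[ \sum_{n,p} (-1)^p x^{n-p} t^n \sum_{\substack{\lvert \pi\rvert + \lvert \lambda \rvert = n \\ \ell(\pi)=n-p}} \ch\bigl(\Ind_{Y_{\pi,\lambda}}^{B_n}\zeta_{\pi,\lambda}\bigr) \]
and apply $\omega$ at the end. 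By property (1) of the type $B$ Frobenius map, induction from the product $\bigtimes_i Y_i\wr S_{a_i}\times\bigtimes_j Y_j\wr S_{b_j}$ factors as a product over the cycle lengths. By \Cref{thm:Ind_repr_wr0}, each factor $\Ind_{Y_i\wr S_{a_i}}$ becomes a plethysm $e_{a_i}[\,c_i\,]$ or $h_{b_j}[\,c_j\,]$, where $c_i=\ch(\Ind_{Y_i}^{B_i}(1\boxtimes\zeta_i))$. Note that we first induce to $B_i\wr S_{a_i}$.

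The key local computation is $c_i$. We write $Y_i=\Z_2\times C_i\subseteq \Z_2\times S_i$ and induce in two steps: first $\Ind_{C_i}^{S_i}\zeta_i$, which by the classical Lehrer--Solomon/Klyachko result is $\Lie_i$ (up to the standard sign convention), and then $\Ind_{\Z_2\times S_i}^{B_i}1\boxtimes(-)$. By \Cref{lem:ind_Z_2xS_n} the second step gives $\tfrac12(\ch\Lie_i[X+Y]+\ch\Lie_i[X+\epsilon Y])|_{\epsilon=-1}$. We expect this to be the $Y$-even part of $\ch(\Lie_i)[X+Y]$, i.e. $(\Lie_\epsilon[X+Y])^{\even}$ in degree $i$ once the $\epsilon^i$ bookkeeping is matched. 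Summing over all $(\pi,\lambda)$ with weights $(-1)^p x^{\ell(\pi)}$ then turns $\sum_a e_a[-x c]$- and $\sum_b h_b[c]$-type series into plethystic exponentials. Concretely, $\sum_a (-x)^a e_a[c]=\Exp(-xc)$ and $\sum_b h_b[c]=\Exp(c)$, with the sign $(-1)^p$ absorbed using $p=n-\ell(\pi)$ and $t$-degree counting. This should give the first formula, $\omega(\Exp((1-x)(\Lie_\epsilon[X+Y])^{\even}))|_{\epsilon=-1}$. We expect the main obstacle to be this sign and $\epsilon$ bookkeeping: the roles of $\sgn_{a_i}$, of $(-1)^p$, of the twist $\omega$, and the fact that plethysm does not commute with evaluation at $\epsilon=-1$ (Remark \ref{remark-plethysm-minus}). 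To control it, we will first check the identity in low degree, $n=1,2$, against the known type $B$ Poincaré polynomials.

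For the product formula, we will expand in power sums. We use $\Logg$ of the classical identity $\sum_i \ch(\Lie_i)t^i=\sum_{n}\frac{-1}{n}\sum_{d\mid n}\mu(d)\log(1-p_d t^{d})^{n/d}$, i.e. $\ch\Lie_n=\frac1n\sum_{d\mid n}\mu(d)p_d^{n/d}$. We substitute $p_d\mapsto \epsilon^d p_d[X+Y]$, take the even part via $\tfrac12(f(X+Y)+f(X-Y))$, and note that $\Exp$ turns a sum of power sums into a product over $n$. Collecting the contributions of odd $d$, for which $\epsilon^d=-1$ flips $X+Y$ to $X-Y$, versus even $d$ produces the factors $(1+t^np_n[X+Y])(1+t^np_n[X-Y])/(1+t^{2n}p_{2n}[X+Y])$. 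Here $\omega$ converts $(1-\cdot)^{-1}$ into $(1+\cdot)$. The exponent $T_n^B(x)$ comes from the M\"obius sum restricted to odd $d$, together with the factor $(x-1)$ from $(1-x)$ after $\omega$ and the $\frac{1}{2}$ from the even-part projection. This last step is a routine but careful manipulation of exponents of the form $x^{n/d}-1$ arising from $\psi_d(1-x)$.
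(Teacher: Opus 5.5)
Your plan is the paper's proof. It uses Douglass's decomposition with the $V_{(1^n),\emptyset}$-twist read as $\omega$, the wreath-product plethysm of Theorem \ref{thm:Ind_repr_wr0} producing $e_{a_i}[\,\cdot\,]$ and $h_{b_i}[\,\cdot\,]$, the identification $\Ind_{C_i}^{S_i}\zeta_i=\Lie_i$ with Lemma \ref{lem:ind_Z_2xS_n} for the $Y$-even part, summation into $\Exp$, and finally the M\"obius expansion of $\Lie_\epsilon$ split by parity. However, your heuristics for that last step are wrong in several places and should be replaced by the actual computation:
\begin{enumerate}
\item The $X-Y$ factors come from the even-part substitution $y\mapsto -y$ acting on $p_d[X+Y]$ with $d$ odd, not from $\epsilon^d=-1$.
\item The sign $(-1)^m$ and the exponent $x^{k}-1$ with $k=m/d$ come from the Adams operation $\psi_k$ inside $\Exp$, with $\epsilon^{dk}$ evaluated only afterwards. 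The minus sign in $\Lie_\epsilon=-\sum_d\frac{\mu(d)}{d}\log(1-\epsilon^dt^dp_d)$ also contributes here; this is not a $\psi_d$, and $\omega$ plays no role in it.
\item $\omega$ acts factorwise by $p_m[X\pm Y]\mapsto(-1)^{m-1}p_m[X\pm Y]$. It turns $1-(-1)^mt^mp_m[X\pm Y]$ into $1+t^mp_m[X\pm Y]$ and $1-t^{2m}p_{2m}[X+Y]$ into $1+t^{2m}p_{2m}[X+Y]$, with the exponents $\pm T^B_m(x)$ unchanged. It does not convert $(1-\cdot)^{-1}$ into $(1+\cdot)$.
\end{enumerate}
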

\begin{proof}
By the previous theorem we have
    \begin{align*}
         \omega(\chi^B) &=  \sum_{\pi, \lambda} (-1)^{\lvert \pi \rvert +\lvert \lambda \rvert - \ell(\pi)} \ch(\Ind_{Y_{\pi, \lambda}}^{B_{\lvert \pi \rvert +\lvert \lambda \rvert}} \zeta_{\pi, \lambda}) x^{\ell(\pi)}t^{|\lambda|+|\pi|}  \\
        &= \left( \sum_\pi \prod_i (- x)^{a_i} e_{a_i}[t^i \epsilon^i\ch(\Ind_{Y_i}^{B_i} \mathds{1} \boxtimes \zeta_i) ]\right)
        \left( \sum_\lambda \prod_i h_{b_i}[\epsilon ^it^i\ch(\Ind_{Y_i}^{B_i} \mathds{1} \boxtimes \zeta_i) ]\right)\Big|_{\epsilon=-1},
    \end{align*}
    where we used \Cref{thm:Ind_repr_wr}. The induced representation $ \Ind_{C_i}^{S_i} \zeta_i$ is the representation $\Lie_i$ and by \Cref{lem:ind_Z_2xS_n}:
    \begin{align*}
        \omega(\chi^B) &= \left( \sum_\pi \prod_i (-x)^{a_i}e_{a_i}[(\Lie_i[X+Y])^\even t^i\epsilon ^i]\right)
        \left( \sum_\lambda \prod_i h_{b_i}[(\Lie_i[X+Y])^\even t^i \epsilon^i]\right)\Big|_{\epsilon=-1}\\
        &= \left(  \prod_{i=1}^{\infty} \sum_{a=0}^\infty (-x)^{a} e_a [(\Lie_i[X+Y])^\even t^i \epsilon^i] \right)
        \left( \prod_{i=1}^{\infty} \sum_{a=0}^\infty h_{a}[(\Lie_i[X+Y])^\even t^i \epsilon^i]\right)\Big|_{\epsilon=-1} \\
        &= \prod_{i=1}^{\infty} \Exp \left( -x \epsilon^{i}t^i (\Lie_i[X+Y])^\even \right)
         \prod_{i=1}^{\infty} \Exp \left( \epsilon^{i}t^i (\Lie_i[X+Y])^\even  \right)\Big|_{\epsilon=-1} \\
        &= \Exp \left( (1- x) \bigl( \Lie_{\epsilon} [X+Y])^\even \right).
    \end{align*}

    By \cite{GetzlerOperads}*{Proposition 5.3} we have $\Lie_\epsilon = - \sum_{n \geq 1} \frac{\mu(n)}{n}\log(1- \epsilon^n p_n)$ and 
    \begin{align*}
        (\log(1-\epsilon^nt^np_n)[X+Y])^\even = \begin{cases}
            \log(1-\epsilon^nt^np_n)[X+Y] & \textnormal{if } n \textnormal{ even,} \\
            \frac{1}{2}(\log(1-\epsilon^nt^np_n)[X+Y]+\log(1-\epsilon^nt^np_n)[X-Y]) & \textnormal{if } n \textnormal{ odd.}  
        \end{cases}
    \end{align*}
    We have three factors, the first one for $n$ even is
    \begin{align*}
        &\Exp  \left[ (x-1) \sum_{n \textnormal{ even}}  \frac{\mu(n)}{n}  \log(1- \epsilon^n t^n p_n)[X+Y] \right] \\
        &=\exp \left(\sum_{k\geq 1} \frac{p_k}{k} \right) \left[ -\frac{x-1}{2} \sum_{n \textnormal{ odd}}  \frac{\mu(n)}{n}  \log(1- \epsilon^{2n}t^{2n} p_{2n})[X+Y] \right] \\
        &= \exp \left( -\sum_{k\geq 1} \frac{x^k-1}{2k} \sum_{n \textnormal{ odd}}  \frac{\mu(n)}{n} \Bigl( \log(1- \epsilon^{2nk}t^{2nk} p_{2nk})[X+Y] \Bigr) \right) \\
        &= \exp \left( - \sum_{m\geq 1} \sum_{\substack{d \mid m \\ d \textnormal{ odd}}}  \frac{x^{\frac{m}{d}}-1}{2m}  \mu (d)  \log(1- \epsilon^{2m}t^{2m} p_{2m})[X+Y]  \right) \end{align*}
        Thus, evaluating at $\epsilon=-1$, we obtain
        \begin{align*}
         &= \prod_{m \geq 1} \left(1- (-1)^{2m}t^{2m} p_{2m}[X+Y] \right)^{- T_m^B(x)}.
    \end{align*}
    The second one is
    \begin{align*}
        &\exp \left(\sum_{k\geq 1} \frac{p_k}{k} \right) \left[ (x-1) \sum_{n \textnormal{ odd}}  \frac{\mu(n)}{2n}  \log(1- \epsilon^n t^n p_n)[X+Y] \right] \\
        &= \exp \left( \sum_{k\geq 1} \frac{x^k-1}{k} \sum_{n \textnormal{ odd}}  \frac{\mu(n)}{2n} \Bigl( \log(1- \epsilon^{nk} t^{nk} p_{nk})[X+Y] \Bigr) \right) \\
        &= \exp \left( \sum_{m\geq 1} \sum_{\substack{d \mid m \\ d \textnormal{ odd}}}  \frac{x^{\frac{m}{d}}-1}{2m}  \mu (d)  \log(1- \epsilon^{m}t^m p_{m})[X+Y]  \right) \end{align*}
        Thus, evaluating at $\epsilon=-1$, we obtain
        \begin{align*}
         &= \prod_{m \geq 1} \left(1- (-1)^{m} t^m p_{m}[X+Y] \right)^{ T_m^B(x)}.
    \end{align*}
    Similarly the last one is $\prod_{m \geq 1} \left(1- (-1)^{m}t^m p_{m}[X-Y] \right)^{ T_m^B(x)}$.
    We have obtained 
    \begin{align*}
        \omega(\chi^B) &=  \prod_{m \geq 1} \left( \frac{(1- (-1)^{m}t^m p_{m}[X+Y])(1- (-1)^{m}t^m p_{m}[X- Y])}{(1- (-1)^{2m}t^{2m} p_{2m}[X+Y])} \right)^{T_m^B(x)}.
    \end{align*}
    Applying $\omega$ on both sides we obtained the claimed identity.
\end{proof}

 \subsection{Alternative proof of Theorem \ref{formula-chiB}}

In this paragraph, we sketch an alternative proof of Theorem \ref{formula-chiB}  by counting points over finite fields of hyperplane arrangements of type $B$. This technique  has been applied in several papers to the calculation of the cohomology of the complements of hyperplane arrangements; see, for example, \cite{ATHANASIADIS1996193},\cite{Lehrer} and \cite{Kisin-Lehrer} for an extension to the equivariant case. Specifically, in the latter paper, the authors use the counting of points over finite fields to provide an alternative proof of Getzler’s formula of eq.(\ref{equation_chitypeA}) for $\overline{\chi}^A$, mentioned earlier. We apply the same approach in type $B$.

Recall that we have $$\chi_n^B(x)=1/x^n \ch(P_{M(\mathcal{B}_n),c}^{B_n})(-x) .$$

The latter polynomial can be rewritten as follows. For any $w \in B_n$, let $$m_w(x)=\sum_i (-x)^i \tr(w | H_c^i(M(\mathcal{B}_n),\Q)) $$ and for any $(\lambda,\mu) \in \mathcal{P}^{(2)}_n$, put $m_{(\lambda,\mu)}(x)=m_{w}(x)$ for any element $w$ in the conjugacy class indexed by $(\lambda,\mu)$.  We have thus $$\chi^B_n(x)= \sum_{(\lambda,\mu) \in \mathcal{P}_n^{(2)}}\dfrac{m_{(\lambda,\mu)}(x)}{x^n}z_{(\lambda,\mu)}^{-1}p_{(\lambda,\mu)} .$$

It is thus enough to understand the polynomials $m_{(\lambda,\mu)}(x)$. These polynomials can be understood using finite fields in the following way. Let $q=p^m$ be the power of an odd prime $p$ and  $M(\mathcal{B}_{n,q})=\{(x_1,\dots,x_n) \in (\overline{\F}^{\times}_q)^n \ | \ x_i \neq \pm x_j\}$. Notice that $B_n$ acts on $M(\mathcal{B}_{n,q}).$ Let $F:M(\mathcal{B}_{n,q}) \to M(\mathcal{B}_{n,q})$ the Frobenius map, i.e. $F(x_1,\dots,x_n)=(x_1^q,\dots,x_n^q)$ and we put $wF=w \circ F$. In this setting, we have 
\begin{equation}
\label{finite-fields-equation}
    m_w(q)q^{-n}=\#M(\mathcal{B}_{n,q})^{wF},
\end{equation}
i.e. it is enough to understand the quantities $\#(M(\mathcal{B}_{n,q}))^{wF}$ for any $w$ and every $q$. Formula (\ref{finite-fields-equation}) is a consequence, for instance of \cite{Lehrer}*{Proposition 2.4} and the results of the second author with Letellier \cite{Letellier-Scognamiglio}*{Theorem 2.7.1}.

We recall some generalities about the Frobenius action here useful to compute these quantities. In what follows, for a Frobenius orbit $O \subseteq \overline{\F}_q^{\times}$ it is a meant a subset $O=\{x^{q^d}\}_{d \in \N_{>0}}$ for an element $x \in \overline{\F}_q^{\times}$. Notice that $|O|=r$ if and only if $x \in \F_{q^r}$ and $x \not \in \F_{q^i}$ for any $i \leq r$. Moreover, two orbits either coincide or are disjoint. We deduce that the number of Frobenius orbits of size $r$, is given by $2rR^B_r(q)$, where $$R^B_r(x)=\dfrac{1}{2r}\sum_{d |r}\mu(d)(x^{\frac{r}{d}}-1) .$$

Notice that for any Frobenius orbit $O$, the set $-O=\{-y\}_{y \in O}$ is a Frobenius orbit too and thus either $O=-O$ or $O $ and $-O$ are disjoint. Moreover, if there is an orbit $O$ of length $r$ such that $O=-O$, we have that $r$ is even and $x^{q^{\frac{r}{2}}}=-x$. 

In this case, it is not complicated to see that the number of elements having Frobenius orbits of length $r$ with $O=-O$ is given by $rT^B_{r/2}(q)$, where  In what follows, put $E^B_r(x)\coloneqq R^B_r(x)-1/2 T^B_{r/2}(x)$ if $r$ is even.

Fix $(\lambda,\mu) \in \mathcal{P}^{(2)}_n$ and $w$ in the corresponding conjugacy class. Suppose first that $\mu=\emptyset$ and $\lambda=(n)$. We have that $x=(x_1,\dots,x_n) \in M(\mathcal{B}_{n,q})^{wF}$ if and only if $x_1^{q^n}=x$ and $x^{q^d} \neq \pm x$ for all $1 \leq d \leq n-1$. We are thus asking for the number of elements having Frobenius orbits $ O \subseteq \overline{\F}_q^{\times}$ of length $n$ such that $O \neq -O$. We have thus that in this case if $n$ is odd $m_{((n),\emptyset)}(q)=2nR^B_n(q)$ and if $n$ is even $m_{((n),\emptyset)}(q)=2nE^B_n(q)$.

More generally, we see that if $\mu=\emptyset$ and $\lambda=(j^{m_j})$, we have that \begin{align}&m_{(\lambda,\emptyset)}(x)=\prod_{j \text{odd }}(2jT^B_j(x))(2jT^B_j(x)-2j) \cdots (2jT^B_j(x)-2j(m_j-1)) \nonumber \\& \prod_{j \text{ even}} (2jE^B_j(x))(2jE^B_j(x)-2j) \cdots (2jE^B_j(x)-2j(m_j-1))= \nonumber\\ &\prod_{j \text{odd }}(2j)^{m_j}j!\binom{T^B_j(x)}{m_j}\prod_{j \text{ even}}(2j)^{m_j}j!\binom{E^B_j(x)}{m_j}=2^{l(\lambda)}z_{\lambda}\prod_{j \text{odd }}\binom{T^B_j(x)}{m_j}\prod_{j \text{ even}}\binom{E^B_j(x)}{m_j} .\label{eq-typeB-fin-fields1}
\end{align}

Suppose now that $\lambda=\emptyset$ and $\mu=(n)$. We have that $x \in M(\mathcal{B}_{n,q})^{wF}$ if and only if $x^{q^n}=-x$ and $x^{q^d} \neq \pm x$ for all $1 \leq d \leq n-1$. We are thus asking for the number of elements having Frobenius orbits $O$ of size $2n$ and such that $O=-O$. We thus see that $m_{(\emptyset,(n))}=2nT^B_{n}(q)$. More generally, as above if $\mu=(j^{m_j})$, \begin{equation}m_{(\emptyset,\mu)}(x)=2^{l(\mu)}z_{\mu}\prod_{j}\binom{T^B_{j}(x)}{m_j} .\label{eq-typeB-fin-fields2}\end{equation} From the description above, we also deduce that $m_{(\lambda,\mu)}(x)=m_{(\lambda,\emptyset)}(x)m_{(\emptyset,\mu)}(x)$. From  eq.(\ref{eq-typeB-fin-fields1}) and eq.(\ref{eq-typeB-fin-fields2}), we deduce that \begin{align}
   & 1+\sum_{n \geq 1}t^n \sum_{(\lambda,\mu) \in \mathcal{P}^{(2)}}m_{(\lambda,\mu)}(x)z_{(\lambda,\mu)}^{-1}p_{\lambda,\mu}=\nonumber\\ & 1+\sum_{(\lambda,\mu) \in \mathcal{P}^{(2)}}\prod_{j \text{ odd }}\binom{T^B_j(x)}{m_{j,\lambda}}\binom{T^B_{j}(x)}{m_{j,\mu}}\prod_{j \text{ even}}\binom{E^B_j(x)}{m_{j,\lambda}}\binom{T^B_{j}(x)}{m_{j,\mu}}t^{|\lambda|+|\mu|}=\nonumber \\
& \prod_{n \text{ odd}}(1+t^np_n[X+Y])^{T^B_n(x)}(1+t^np_n[X-Y])^{T^B_{n}(x)}\label{eq-arith-proof0} = \\ 
& \prod_{n \text{ even}}(1+t^np_n[X+Y])^{R^B_n(x)-1/2T^B_{n/2}(x)}(1+t^n p_n[X-Y])^{T^B_n(x)} \label{eq-arith-proof1}
\end{align}

Notice that, for any $d$ such that $4 |d$, we have $\mu(d)=0$. We deduce that for any $n$ even, we have $R^B_n(x)=T^B_n(x)-1/2T^B_{n/2}(x)$, from which we see that the product in eq.(\ref{eq-arith-proof0}) and \ref{eq-arith-proof1} above is equal to 
\begin{equation*}
    \prod_{n \geq 1}\left( \frac{(1+t^n p_n[X+Y])(1+t^n p_n[X- Y])}{1+t^{2n}p_{2n}[X+Y]} \right)^{T_n^B(x)}.
\end{equation*}

From \Cref{formula-chiB}, we deduce the following formula for $\overline{\chi}^B(t)$

\begin{equation}
    \overline{\chi}^B=\dfrac{ \prod_{n \geq 1}\left( \frac{(1+t^n p_n[X+Y])(1+t^n p_n[X- Y])}{1+t^{2n}p_{2n}[X+Y]} \right)^{T_n^B(x)}-x}{x-1}
\end{equation}

\subsection{Combinatorial formulas for \texorpdfstring{$H^{B}(t)$}{HB(t)}}

From Remark \ref{inverted-formula-A} and Lemma \ref{plethysmsn-bn}, we deduce that we have \begin{equation}
    -\overline{\chi}^A[X+Y](H^A[X+Y])=p_{((1),\emptyset)}t
\end{equation}

Notice that, for any $H \in \Lambda^{(2)}[[t]]$, we have $H[p_{((1),\emptyset)}t]=H$. From Theorem \ref{compositional-formulatypeB}, we deduce thus the following result.
\begin{theorem}
\label{thm:inverted-equation-B}
There is an equality
\begin{equation}
\label{inverted-equation-B}
H^B=\dfrac{-1}{\overline{\chi}^B(H^A[X+Y])}.
\end{equation}
\end{theorem}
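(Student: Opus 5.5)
The plan is to derive the identity from Theorem \ref{identity-typeB} by plethystically substituting $H^A[X+Y]$ into it. Theorem \ref{identity-typeB} reads
\[\overline{\chi}^B \cdot H^B(-\overline{\chi}^A[X+Y]) = -1,\]
where the product is the ordinary product in $\Lambda^{(2)}[x][[t]]$. Both sides are power series in $t$ with bisymmetric coefficients. The operation $F \mapsto F(G)$, for a fixed $G$ with zero constant term, is a ring homomorphism of $\Lambda^{(2)}[x][[t]]$, since each $f\mapsto f[G]$ is multiplicative in $f$ by construction. I would compose both sides on the right with $G:=H^A[X+Y]$, taking the Chow series of type $A$ without constant term.

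First, I would check that plethystic composition in the bisymmetric setting is associative: $(F(G_1))(G_2)=F(G_1(G_2))$ whenever $G_1,G_2$ have zero constant term. It suffices to check this on power sums $p_{(\alpha,\beta)}$ and on the variable $t$, using $p_{(\alpha,\beta)}[f]=p_\alpha[f]\,p_\beta[f^{\even}-f^{\odd}]$. Here one has to verify that taking $Y$-even and odd parts is compatible with inner plethysm. This holds when the inner series $G_2$ has coefficients with only $X+Y$ dependence, i.e.\ lying in the image of $f\mapsto f[X+Y]$, which is $Y$-symmetric in the sense that $p_d[X+Y]$ is even. In that case $G_2^{\odd}=0$ and $G_2^{\even}=G_2$, so $p_{(\emptyset,d)}[G_2]=p_d[G_2]$ behaves as expected. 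This is the step I expect to be the main obstacle: the bisymmetric composition is not a priori associative, because the even/odd splitting is not a $\lambda$-ring operation. I would therefore state and prove associativity only in the special case where the innermost series is of the form $g[X+Y]$, reducing via Theorem \ref{compositional-formulatypeB} or directly on power sums, in the same spirit as Lemma \ref{plethysmsn-bn}.

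Second, I would use the displayed identity $-\overline{\chi}^A[X+Y](H^A[X+Y])=p_{((1),\emptyset)}t$. This follows from Remark \ref{inverted-formula-A} and Lemma \ref{plethysmsn-bn}, extended from symmetric functions to series in $t$ and $x$, which is routine since $t$ and $x$ have trivial Adams behaviour compatible on both sides. Combined with associativity it gives
\[H^B(-\overline{\chi}^A[X+Y])(H^A[X+Y]) = H^B(p_{((1),\emptyset)}t)=H^B,\]
where the last equality is the noted fact $H[p_{((1),\emptyset)}t]=H$.

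Third, since composition with $G$ is a ring homomorphism fixing constants, the left side of Theorem \ref{identity-typeB} becomes $\overline{\chi}^B(H^A[X+Y])\cdot H^B$ and the right side stays $-1$. The series $\overline{\chi}^B(H^A[X+Y])$ has constant term $-1$, so it is invertible in $\Lambda^{(2)}[x][[t]]$. Dividing by it yields
\[H^B=\frac{-1}{\overline{\chi}^B(H^A[X+Y])}.\]
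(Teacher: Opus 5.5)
Your strategy is the paper's: compose the identity of Theorem \ref{identity-typeB} on the right with $H^A[X+Y]$, use $-\overline{\chi}^A[X+Y](H^A[X+Y])=p_{((1),\emptyset)}t$ and $H(p_{((1),\emptyset)}t)=H$, then invert the series with constant term $-1$. The paper states this tersely and uses associativity of the type $B$ composition without comment. The gap is in how you justify that associativity.

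\textbf{The false claim.} You assert that $H^A[X+Y]$ has zero $Y$-odd part. This is false. We have $p_d[X+Y]=p_d(X)+p_d(Y)$, and $p_d(Y)$ has $Y$-degree $d$. So already the leading term $p_1[X+Y]\,t$ of $H^A[X+Y]$ has odd part $p_1(Y)\,t$.

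\textbf{Why the odd part matters.} Your own next step needs it. If $G^{\odd}=0$, then $p_k(Y)[G]=\psi_k(G^{\odd})=0$, so all $Y$-dependence of the outer series is killed. Then $H^B(p_{((1),\emptyset)}t)=H^B$ would fail. That identity holds only because $p_{(\emptyset,(d))}[p_{((1),\emptyset)}t]=\psi_d(p_1(X)t)-\psi_d(p_1(Y)t)=p_{(\emptyset,(d))}t^d$. So the special case you plan to prove does not contain the case you need.

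\textbf{The repair.} The argument is short and gives associativity for every inner series with zero constant term. For such $G$, let $\Phi_G(F)=F(G)$. This is the ring map sending $p_k(X)\mapsto\psi_k(G^{\even})$ and $p_k(Y)\mapsto\psi_k(G^{\odd})$, and fixing $x$.
\begin{enumerate}
\item $\Phi_G$ commutes with all Adams operations; check this on generators using $\psi_{dk}=\psi_d\psi_k$.
\item $\Phi_G$ preserves $Y$-parity. Since $\psi_k$ multiplies $Y$-degree by $k$, the image $\Phi_G(p_\alpha(X)p_\beta(Y))=\prod_i\psi_{\alpha_i}(G^{\even})\prod_j\psi_{\beta_j}(G^{\odd})$ has $Y$-parity $|\beta|$. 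Hence $\Phi_G(F)^{\even}=\Phi_G(F^{\even})$ and $\Phi_G(F)^{\odd}=\Phi_G(F^{\odd})$.
\item Consequently $\Phi_{G_2}\circ\Phi_{G_1}$ and $\Phi_{\Phi_{G_2}(G_1)}$ are ring maps that agree on generators. For example, $\Phi_{G_2}(\psi_k(G_1^{\even}))=\psi_k(\Phi_{G_2}(G_1^{\even}))=\psi_k(\Phi_{G_2}(G_1)^{\even})$, which is $p_k(X)[G_1(G_2)]$.
\end{enumerate}

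The fact that the even/odd splitting does not commute with $\psi_d$, which worried you, is irrelevant. What is needed is that the splitting commutes with $\Phi_G$, and it always does. With this argument in place of yours, your proof is complete.

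Alternatively, as the remark after the theorem indicates, you can bypass associativity entirely. Run the computation of Theorem \ref{identity-typeB} for $H*\overline{\chi}$ instead of $\overline{\chi}*H$; this gives $H^B\cdot\overline{\chi}^B(H^A[X+Y])=-1$ directly.
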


\begin{remark}
Similarly to what observed in Remark \ref{inverted-formula-A}, Formula \eqref{inverted-equation-B} can also be obtained by swapping the role of $H^B$ and $\overline{\chi}^B$ in the proof of Theorem \ref{identity-typeB}, i.e.\ using that, for every $n \geq 1$, we have  $$( H^{\mathcal{G}_{min}}_{\Pi^B_n} \ast \overline{\chi}_{\Pi^B_n}^{\mathcal{G}_{min}})_{[\hat{0},\hat{1}]}=0.$$
\end{remark}

Given the results of Theorem \ref{compositional-formulatypeB} describing plethystic composition of series with coefficients in $\Lambda^{(2)}$ and the combinatorial descriptions of the plethystic inverse, mentioned in \cref{paragraph-equiv-bell}, we obtain a priori a way to describe combinatorially the coefficients of $H^B(t)$.

Theorem \ref{thm:inverted-equation-B} was already proven in \cite{Henderson}*{Theorem 2.4}, in which the author studies more generally  the arrangements associated with wreath products of cyclic groups. In this latter article, however, the author does not use incidence algebras; instead, his proof relies on species and an explicit stratification of minimal wonderful models. As remarked in the introduction, even the plethistic operations introduced in type $B$ are defined differently, albeit equivalently, and there does not appear to be any explicit mention of their interpretation in terms of representation theory.   

\section*{Acknowledgement}
The authors are members of the INdAM research group GNSAGA.
The authors are partially supported by the Italian Ministry of University and Research (MUR) under the grant FIS 2, project “Chow Ring of Toric Arrangements” (FIS-2023-00808, CUP J53C25001900001).
The first author is partially supported by  the PRIN Project 2017YRA3LK "Moduli and Lie theory". The second author was partially supported by  INdAM research fellowship.

\bibliographystyle{amsalpha}
\bibliography{references}

\end{document}